\newtheorem{proposition}{Proposition}[section]
\newtheorem{theorem}[proposition]{Theorem}
\newtheorem{lemma}[proposition]{Lemma}
\newtheorem{corollary}[proposition]{Corollary}
\theoremstyle{definition}
\newtheorem{definition}[proposition]{Definition}
\newtheorem{example}[proposition]{Example}
\newtheorem{remark}[proposition]{Remark}
\newcommand{\myspace}{\xspace}
\newcommand{\tr}{\bigtriangleup}
\newcommand{\dtr}{\bigtriangledown}
\newcommand{\ssp}{subspace\xspace}
\newcommand{\ssps}{subspaces\xspace}
\newcommand{\tsp}{topological space\xspace}
\newcommand{\tsps}{topological spaces\xspace}
\newcommand{\iaoi}{if and only if\xspace}
\newcommand{\ie}{i.e.\xspace}
\newcommand{\eg}{e.g.\xspace}
\newcommand{\Ie}{I.e.\xspace}
\newcommand{\resp}{resp.\xspace}
\newcommand{\ve}{\ensuremath{\varepsilon}\myspace}
\newcommand{\emps}{\ensuremath{\emptyset}\myspace}
\newcommand{\Ra}{\ensuremath{\Rightarrow}\myspace}
\newcommand{\uv}[1]{``#1''}
\newcommand{\Zobr}[3]{\ensuremath{#1\colon #2\to #3}}
\newcommand{\Vloz}[3]{\ensuremath{#1\colon #2\hookrightarrow #3}}
\newcommand{\inv}[1]{#1^{-1}}
\newcommand{\Invobr}[2]{\inv{#1}(#2)}
\newcommand{\Obr}[2]{#1[#2]}
\newcommand{\ol}[1]{\ensuremath{\overline{#1}}}
\newcommand{\intrv}[2]{\ensuremath{\langle #1,#2 \rangle}}
\newcommand{\mc}[1]{\ensuremath{\mathcal{#1}}\myspace}
\newcommand{\mfr}[1]{\ensuremath{\mathfrak{#1}}\myspace}
\newcommand{\topo}[1]{\ensuremath{\mathcal{#1}}}
\newcommand{\card}[1]{\ensuremath{\operatorname{card}#1}}
\newcommand{\Ldots}[3]{\ensuremath{#1_{#2},\ldots,#1_{#3}}}
\newcommand{\Int}{\operatorname{Int}}
\newcommand{\R}{\ensuremath{\mathbb R}\xspace}
\newcommand{\N}{\ensuremath{\mathbb N}\xspace}
\newcommand{\CHb}[1]{\ensuremath{\mathrm{CH(}#1\mathrm{)}}\xspace}
\newcommand{\CH}[1]{\CHb{\Kat{#1}}}
\newcommand{\SSb}[1]{\ensuremath{\mathrm S #1}\xspace}
\newcommand{\SSp}[1]{\SSb{\Kat{#1}}}
\newcommand{\RHb}[1]{\ensuremath{\mathrm{EH(}#1\mathrm{)}}}
\newcommand{\RH}[1]{\RHb{\Kat{#1}}}
\newcommand{\SCHb}[1]{\ensuremath{\mathrm{HCH(}#1\mathrm{)}}}
\newcommand{\SCH}[1]{\ensuremath{\mathrm{HCH(}\Kat{#1}\mathrm{)}}}
\newcommand{\CHbA}[2]{\ensuremath{\mathrm{CH_{\Kat{#1}}(}#2\mathrm{)}}}
\newcommand{\CHAA}[2]{\ensuremath{\mathrm{CH_{\Kat{#1}}(}\Kat{#2}\mathrm{)}}}
\newcommand{\SumA}[3]{\ensuremath{\sum\limits_{#1} \langle #2, #3 \rangle}}
\newcommand{\Suma}[2]{\ensuremath{\sum \langle #1,#2 \rangle}}
\newcommand{\Sum}[1]{\ensuremath{\sum #1}}
\newcommand{\filsum}[1]{\ensuremath{#1}-sum\xspace}
\newcommand{\Asum}[1]{\ensuremath{#1}-sum\xspace}
\newcommand{\Asums}[1]{\ensuremath{#1}-sums\xspace}
\newcommand{\CHA}{\CHb A}
\newcommand{\Calp}{\ensuremath{C(\alpha)}\myspace}
\newcommand{\Cbet}{\ensuremath{C(\beta)}\myspace}
\newcommand{\Aom}{\ensuremath{A_\omega}\myspace}
\newcommand{\Com}{\ensuremath{C(\omega)}\myspace}
\newcommand{\Balp}{\ensuremath{B(\alpha)}\myspace}
\newcommand{\Bom}{\ensuremath{B(\omega)}\myspace}
\newcommand{\SCHA}{\SCHb A}
\newcommand{\Kat}[1]{\ensuremath{\mathbf{#1}}\xspace}
\newcommand{\Top}{\Kat{Top}}
\newcommand{\Haus}{\Kat{Haus}}
\newcommand{\ZD}{\Kat{ZD}}
\newcommand{\ZDt}{\Kat{ZD_0}}
\newcommand{\Tych}{\Kat{Tych}}
\newcommand{\Disc}{\Kat{Disc}}
\newcommand{\Ind}{\Kat{Ind}}
\newcommand{\Reg}{\Kat{Reg}}
\newcommand{\CReg}{\Kat{Tych}}
\newcommand{\TD}{\Kat{TD}}
\newcommand{\Con}{\Kat{Con}}
\newcommand{\FG}{\Kat{FG}}
\newcommand{\Seq}{\Kat{Seq}}
\newcommand{\Gena}[1]{\ensuremath{\Kat{Gen}(#1)}\xspace}
\newcommand{\Genal}{\Gena{\alpha}}
\newcommand{\alnul}{\ensuremath{\omega_0}\xspace}
\newcommand{\alone}{\ensuremath{\omega_1}\xspace}
\newcommand{\Sierp}{Sierpi\'nski\xspace}
\newcommand{\ADA}[2]{\ensuremath{\mathrm{AD_{\Kat{#1}}(}\Kat{#2}\mathrm{)}}}
\newcommand{\HADA}[2]{\ensuremath{\mathrm{HAD_{\Kat{#1}}(}\Kat{#2}\mathrm{)}}}
\newcommand{\ADAb}[2]{\ensuremath{\mathrm{AD_{\Kat{#1}}(}{#2}\mathrm{)}}}
\newcommand{\HADAb}[2]{\ensuremath{\mathrm{HAD_{\Kat{#1}}(}{#2}\mathrm{)}}}
\newcommand{\ADhull}{AD-hull\xspace}
\newcommand{\ADhulls}{AD-hulls\xspace}
\newcommand{\HADhull}{HAD-hull\xspace}
\newcommand{\HADhulls}{HAD-hulls\xspace}
\newcommand{\Cn}{\mathrm{Cn}}
\newcommand{\hra}{\hookrightarrow}
\newcommand{\BHb}[1]{\ensuremath{\mathrm{BH({#1})}}}
\newcommand{\BH}[1]{\BHb{\Kat{#1}}}
\newcommand{\Dt}{D_2}
\newcommand{\At}{I_2}
\newcommand{\sm}{\setminus}
\newcommand{\UnimorCD}[6]{ \xymatrix{ {#1} \ar[r]^{#2}
\ar[rd]_{#4}&
{#3} \ar@{-->}[d]^{#5} \\
& {#6} } }
\newcommand{\TriangCD}[6]{
\xymatrix{ {#1} \ar[r]^{#2} \ar[rd]_{#4}&
{#3} \ar[d]^{#5} \\
& {#6} } }
\begin{document}

\title[Hereditary, additive and divisible classes]{Hereditary, additive and divisible classes in epireflective subcategories of \Top\thanks{Supported by VEGA Grant 1/3020/06; Author's stay at Universit\"at Bremen was supported by a scholarship of DAAD}}

\author{Martin Sleziak}

\address{KAGDM FMFI UK, Mlynsk\'a dolina, 842 48 Bratislava, Slovakia}
\email{sleziak@fmph.uniba.sk}

% The correct dates will be entered by the editor

\maketitle

\begin{abstract}
Hereditary coreflective subcategories of an epireflective
subcategory $\Kat A$ of $\Top$ such that $\At\notin\Kat A$ (here $\At$
is the 2-point indiscrete space) were studied in \cite{CINCHER2}.
It was shown that a coreflective subcategory $\Kat B$ of $\Kat A$ is
hereditary (closed under the formation of \ssps) \iaoi it is
closed under the formation of prime factors. The main problem
studied in this paper is the question whether this claim remains
true if we study the (more general) subcategories of $\Kat A$ which
are closed under topological sums and quotients in $\Kat A$ instead
of the coreflective subcategories of $\Kat A$.

We show that this is true if $\Kat A\subseteq\Haus$ or under some
reasonable conditions on $\Kat B$. E.g., this holds if $\Kat B$
contains either a prime space, or a space which is not locally connected,
or a totally disconnected space or a non-discrete Hausdorff space.

We touch also other questions related to such subclasses of $\Kat
A$. We introduce a method extending the results from the case of
non-bireflective subcategories (which was studied in
\cite{CINCHER2}) to arbitrary epireflective subcategories of $\Top$.
We also prove some new facts about the lattice of coreflective
subcategories of $\Top$ and $\ZD$.

\noindent Keywords: epireflective subcategory, coreflective
subcategory, hereditary subcategory, prime factor, prime space,
bireflective subcategory, zero-dimensional spaces, $T_0$-spaces.

\noindent MSC2000 Primary: 54B30 \and Secondary: 18B30.
\end{abstract}

\section{Introduction}

Motivated by \cite[Problem 7]{HERHUS} J.~\v{C}in\v{c}ura studied
in \cite{CINC2001} hereditary coreflective subcategories of the
category \Top of all topological spaces and continuous maps. He
proved a nice characterization of hereditary coreflective
subcategories using prime factors of \tsps. However, it would be
interesting to study the hereditary coreflective subcategories also in
other categories of topological spaces, as \Haus or, more
generally, any epireflective subcategory of \Top. In this case the
situation becomes more complicated than in \Top. For instance, in
\Top we obtain the hereditary coreflective hull of a coreflective
subcategory \Kat C simply by taking the subcategory \SSp C
consisting of all \ssps of spaces from \Kat C. This
is not true in the case of coreflective subcategories of
\Haus anymore, as the example of $T_2$-subsequential spaces (see
\cite{GIUHUS1997} or \cite{CINCHER2}) shows. The hereditary coreflective hull
of Hausdorff sequential spaces in \Haus are precisely the
Hausdorff subsequential spaces. But not every Hausdorff subsequential space is a
\ssp of a Hausdorff sequential space. So the description of the
hereditary coreflective hull mentioned above does not work in
\Haus.

Although we see that this new situation leads to some complications, in \cite{CINCHER2} it is
proved that the same characterization holds if we study the same problem in an epireflective
subcategory \Kat A of \Top, which is not bireflective. Namely, it is shown that a
coreflective subcategory of \Kat A is hereditary \iaoi it is closed under the formation of
prime factors. This paper is an attempt to study a similar situation and to add a few new
results in this area of research.

We study here mainly the subcategories which are additive and divisible (i.e., closed under
sums and quotient spaces) in \Kat A. We call them briefly AD-classes. The AD-classes include
coreflective subcategories as a special case. If $\Kat A$ is a quotient-reflective
subcategory of \Top (in particular if $\Kat A=\Top$), then there is no difference between
these two notions. We show that in many cases an AD-class \Kat B in \Kat A is hereditary
\iaoi it is closed under the formation of prime factors. E.g., this holds if $\Kat
A\subseteq\Haus$ or $\Kat B$ contains at least one prime space.

We also present a method how to extend our results to bireflective subcategories of \Top. (Maybe
it is more precise to say that the restriction to non-bireflective subcategories is in fact
not so restrictive.) For this purpose we use the correspondence between bireflective
subcategories of \Top and epireflective subcategories of \Top consisting only of
$T_0$-spaces. This correspondence was introduced in \cite{MARNY} (see also \cite{NAKAGAWA}).

\section{Preliminaries}

Topological terminology follows \cite{ENG} with a few exceptions.
We do not assume the $T_1$ separation axiom for zero-dimensional
spaces. A neighborhood of $x$ is any set $V$ such that there
exists an open subset $U$ with $x\in U\subseteq V$. (So the
neighborhoods in the sense of \cite{ENG} are open neighborhoods in
our terminology.) Compact spaces are not necessarily Hausdorff.
For the notions and results from category theory we refer to
\cite{AHS}, in particular for reflective and coreflective
subcategories of the category \Top of topological spaces and
continuous maps to \cite{HER}.

All subcategories are assumed to be full and isomorphism-closed. To avoid some trivial cases
we assume that every subcategory of \Top contains at least one space with at least two points.

By $X\prec Y$ we mean that the spaces $X$ and $Y$ have the same underlying set and $X$ has
a finer topology than $Y$. By \emph{initial map} we mean an initial morphism in the category
\Top. I.e., $\Zobr fXY$ is said to be initial if $X$ has the initial topology w.r.t.~$f$.

Any ordinal is the set of its predecessors ordered by $\in$. Cardinal numbers
are the initial ordinals. The class of all cardinals will be denoted by $\Cn$.

\subsection{Epireflective and coreflective subcategories}

Perhaps the most important notions from category theory, which we
will use in this paper, are those of reflective and coreflective
subcategory. We review here some basic facts, more can be found in
\cite{AHS}, \cite{HER} or \cite{HERSTRE}.

A subcategory \Kat A of a category \Kat B is \emph{reflective} if for any \Kat B-object there exists an
\Kat A-reflection. The \Kat A-reflection of $X\in\Kat B$ is an object $RX\in\Kat A$ together
with a morphism $\Zobr rX{RX}$ (called the \emph{$\Kat A$-reflection arrow}) which has the
following universal property: For any morphism $\Zobr fXA$ with $A\in\Kat A$ there exists a
unique morphism $\Zobr{\ol f}{RX}A$ such that the following diagram commutes.
$$\UnimorCD Xr{RX}f{\overline f}A$$
The \Kat A-reflection is determined uniquely up to homeomorphism.

The functor $\Zobr R{\Kat B}{\Kat A}$ which assigns to each \Kat
B-object its \Kat A-reflection (and acts on morphisms in the
natural way) is called a \emph{reflector.} This functor is
coadjoint to the embedding functor $\Kat A\hra\Kat B$.

We say that \Kat A is \emph{epireflective} (\emph{bireflective}) in \Kat B if all \Kat
A-reflection arrows are epimorphisms (bimorphisms) in \Kat B. If $\Kat B=\Top$ and all \Kat
A-reflections are quotient maps, we speak about a \emph{quotient-reflective} subcategory.

A subcategory \Kat A of \Top is epireflective in \Top \iaoi it is
closed under the formation of topological products and \ssps.

By $\RH A$ we denote the \emph{epireflective hull} of a subcategory \Kat A. A \tsp $X$
belongs to \RH A \iaoi it is a \ssp of a product of spaces from $\Kat A$. An equivalent
condition is that there exist an initial monosource with domain $X$ and codomain in \Kat A.
(See e.g.~\cite[Theorem 2]{MARNY} or \cite[Theorem 16.8]{AHS}.)
{}
A similar characterization holds for \emph{bireflective hulls.} A \tsp $X$ belongs to the
bireflective hull \BH A of \Kat A \iaoi there is an initial source from $X$ to \Kat A
(\cite[Corollary 2]{KANPAIRS} or \cite[Theorem 16.8]{AHS}).

By $\At$ we will denote the two-point indiscrete space. An epireflective subcategory \Kat A
of \Top is bireflective \iaoi $\At\in\Kat A$. Therefore $\BH A=\RHb{\Kat A\cup\{\At\}}$.

Mostly we will work in an epireflective subcategory $\Kat A$ of \Top which does not contain
$\At$. (We will show in Section \ref{SECTNOTBIR} how to get rid of this assumption.) The same
assumption on \Kat A was used in \cite{CINCHER2}. It is motivated by the fact that only these
epireflective subcategories of \Top are closed under the formation of prime factors. Under
this assumption \Kat A is closed under topological sums, too. (Recall that we made an
agreement that each subcategory contains a space with at least 2 points. Hence, \Kat A
contains all discrete spaces whenever $\At\notin\Kat A$.)

The largest such subcategory of \Top is the category $\Top_0$ of
$T_0$-spaces. The largest subcategory with these properties such
that moreover $\Kat A \subsetneq \Top_0$ is the category $\Top_1$
of $T_1$-spaces.

The smallest such subcategory of \Top is the subcategory $\ZDt$ of
zero-dimensional $T_0$-spaces. (Note that for a zero-dimensional
spaces the conditions $T_0$ and $T_2$ are equivalent.) The
subcategory $\ZDt$ is the epireflective hull of the 2-point
discrete space $\Dt$.

An epireflective subcategory of \Top is quotient-reflective \iaoi it is closed under the
formation of spaces with finer topologies. In a quotient-reflective subcategory of \Top
regular (extremal) epimorphisms are exactly the quotient maps.

Let $\Kat A$ be an epireflective subcategory of \Top and $\Kat A\ne\Ind$ (the subcategory of all indiscrete spaces).
A subcategory $\Kat B\subseteq\Kat A$ is \emph{coreflective} in \Kat A \iaoi it is closed
under topological sums and \Kat A-extremal quotients. In particular \Kat B is coreflective in
\Top if it is closed under sums and quotients. For each subcategory $\Kat B$ of \Kat A there
exists the smallest coreflective subcategory of \Kat A containing \Kat B. It is called the
\emph{coreflective hull} of \Kat B in \Kat A and denoted by $\CHAA AB$. If $\Kat B=\{B\}$
consists of a single space we use the notation $\CHbA AB$. If $\Kat A$ is an epireflective
subcategory of \Top and $\Kat A\ne\Kat{Ind}$, then the members of $\CHAA AB$ are exactly the
$\Kat A$-extremal quotients of topological sums of spaces from \Kat B. If $\Kat A=\Top$, then
the notation $\CH B$ (\resp $\CHb B$) is used and $\CH B$ is called the coreflective hull of
$\Kat B$. $\CH B$ is formed by quotients of topological sums of spaces from \Kat B.

The class $\FG$ of all finitely generated spaces is the coreflective hull of all finite
spaces in \Top. A space is \emph{finitely generated} \iaoi any intersection of its open sets
is again open. The subcategory $\FG$ is the smallest coreflective subcategory of \Top
containing a space, which is not a sum of indiscrete spaces, and it is the coreflective hull
of the \Sierp space $S$. The \emph{\Sierp space} $S$ is the two-point space in which only one
point is isolated.

A subcategory \Kat B of \Top is said to be \emph{hereditary}, if it is closed under \ssps,
and \emph{additive}, if it is closed under topological sums. We say that \Kat B is
\emph{divisible in $\Kat A$} if for every quotient map $\Zobr qXY$ with $X\in\Kat B$ and
$Y\in\Kat A$ we have $Y\in\Kat B$.

A class \Kat B which is additive and divisible in \Kat A will be
called briefly an \emph{AD-class} in $\Kat A$.
If \Kat B is moreover hereditary, we say that it is an \emph{HAD-class} in $\Kat A$.

We define the \emph{\ADhull{}} (\emph{\HADhull{}}) of $\Kat B\subseteq\Kat A$ as the smallest
(hereditary) AD-class in \Kat A containing \Kat B. It will be denoted by $\ADA AB$, \resp
$\HADA AB$. It is clear that $\ADA AB$ consists precisely of all spaces from \Kat A, which
are quotient spaces of topological sums of spaces from \Kat B.

If $\Kat A=\Top$ or \Kat A is quotient-reflective in \Top, then the notion of AD-class
(HAD-class) coincides with the notion of (hereditary) coreflective subcategory.

Whenever \Kat C is coreflective in \Top, the subcategory $\SSp C$ consisting of all subspaces
of spaces from \Kat C is known to be coreflective as well (see e.g.~\cite[Remark
2.4.4(5)]{KANNAN1981} or \cite[Proposition 3.1]{CINC2001}). Clearly the category $\SSp{(\CH
B)}$ is the \emph{hereditary coreflective hull} of \Kat B. It will be denoted also by $\SCH
B$. The hereditary coreflective hull of a single space $A$ in \Top is
denoted by \SCHA.

For the future reference we state some obvious relations between \ADhulls in \Kat A and
coreflective hulls in \Top in the following lemma.
\begin{lemma}\label{LMADAB}
Let \Kat A be an epireflective subcategory of \Top with
$\At\notin\Kat A$. Then $\ADA AB=\CH B \cap \Kat A$ and
$\SSb{(\ADA AB)}\subseteq \HADA AB\subseteq \SCH B \cap \Kat A$.
\end{lemma}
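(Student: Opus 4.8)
The plan is to verify each of the three claimed containments/equalities in turn, using the explicit description of $\ADhull$s and coreflective hulls in terms of quotients of topological sums.

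First, for the equality $\ADA AB=\CH B\cap\Kat A$: the inclusion $\ADA AB\subseteq\CH B\cap\Kat A$ is immediate, since every space in $\ADA AB$ is a quotient of a topological sum of spaces from $\Kat B$ (by the characterization of the \ADhull{} in the text), hence lies in $\CH B$, and by definition it lies in $\Kat A$. For the reverse inclusion, suppose $X\in\CH B\cap\Kat A$. Then $X$ is a quotient of a topological sum $\coprod_i B_i$ with each $B_i\in\Kat B$. Since $\At\notin\Kat A$, the category $\Kat A$ contains all discrete spaces and is closed under topological sums, so $\coprod_i B_i\in\Kat A$; thus $X$ is a quotient map with domain in $\Kat B$ summed up (lying in $\Kat A$) and codomain $X\in\Kat A$. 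Hence $X$ is an $\Kat A$-admissible quotient of a sum of $\Kat B$-spaces, so $X\in\ADA AB$. This gives the equality.

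Second, for $\SSb{(\ADA AB)}\subseteq\HADA AB$: by definition $\HADA AB$ is an HAD-class in $\Kat A$ containing $\Kat B$, hence it is an AD-class containing $\Kat B$, so $\ADA AB\subseteq\HADA AB$; since $\HADA AB$ is hereditary, every subspace of a space in $\ADA AB$ that lies in $\Kat A$ already lies in $\HADA AB$ — and every subspace of a space in $\Kat A$ lies in $\Kat A$ (as $\Kat A$ is epireflective, hence hereditary). So $\SSb{(\ADA AB)}\subseteq\HADA AB$.

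Third, for $\HADA AB\subseteq\SCH B\cap\Kat A$: here I would check that $\SCH B\cap\Kat A$ is itself an HAD-class in $\Kat A$ containing $\Kat B$, and invoke minimality of the \HADhull{}. It contains $\Kat B$ since $\Kat B\subseteq\CH B\subseteq\SCH B$ and $\Kat B\subseteq\Kat A$. It is hereditary because $\SCH B=\SSb{(\CH B)}$ is hereditary (being the subspace-hull of a coreflective subcategory) and $\Kat A$ is hereditary. It is additive because both $\SCH B$ (coreflective in \Top, as noted in the excerpt) and $\Kat A$ are closed under topological sums. Finally, it is divisible in $\Kat A$: if $\Zobr qXY$ is a quotient map with $X\in\SCH B\cap\Kat A$ and $Y\in\Kat A$, then $Y\in\Kat A$ by hypothesis, and $Y\in\SCH B$ because $\SCH B$, being coreflective in \Top, is closed under quotients. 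Hence $\SCH B\cap\Kat A$ is an HAD-class in $\Kat A$ containing $\Kat B$, so it contains the smallest such class $\HADA AB$.

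The only point requiring a little care — and the closest thing to an obstacle — is making sure that all the closure properties invoked are legitimate in the relative setting: specifically that $\SCH B$ is genuinely coreflective in \Top (cited from the literature in the excerpt, so available) and that $\Kat A$ is simultaneously hereditary, productive, and closed under sums, which holds precisely because $\At\notin\Kat A$ forces $\Kat A$ to contain all discrete spaces and be additive, as recalled in the Preliminaries. Everything else is a direct unwinding of definitions.
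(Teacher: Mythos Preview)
Your proof is correct and proceeds exactly as one would expect: you verify each inclusion by unwinding the explicit descriptions of $\ADA AB$, $\HADA AB$, and $\SCH B$ in terms of sums, quotients, and subspaces, invoking the standard closure properties of $\Kat A$ (heredity, additivity under the hypothesis $\At\notin\Kat A$) and the fact that $\SCH B$ is coreflective in $\Top$. The paper itself does not supply a proof of this lemma, calling the relations ``obvious''; your argument is precisely the natural elaboration of that remark.
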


\subsection{Prime spaces and prime factors}

We say that a space $P$ is a \emph{prime space}, if it has
precisely one accumulation point $a$. All prime spaces are $T_0$.
It is easy to see that all prime $T_2$-spaces are zero-dimensional.

If the point $a$ is not isolated in a \ssp $P'$ of a prime space $P$,
\ie, if $P'$ is itself a prime space, we say briefly that $P'$ is
a \emph{prime \ssp{}} of $P$.

\begin{lemma}\label{LMSSPPRIM}
Let $P$ be a prime space with the accumulation point $a$ and $P'$
be a prime \ssp of $P$. Then the map $\Zobr f{P}{P'}$, such that
$f(x)=x$ if $x\in P'$ and $f(x)=a$ otherwise is a retraction.
\end{lemma}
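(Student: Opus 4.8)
The plan is simply to check that $\Zobr fP{P'}$ is well defined, continuous, and restricts to the identity on $P'$; a retraction is by definition nothing more than that. Since $a$ is an accumulation point of $P'$ it belongs to $P'$, so $f$ really does take values in $P'$ in both cases of its definition, and $f(x)=x$ for $x\in P'$ is immediate. It thus remains to prove continuity.

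First I would compute preimages of open sets. If $U$ is open in $P'$, then for $x\in P'$ we have $f(x)=x\in U$ \iaoi $x\in U$, and for $x\notin P'$ we have $f(x)=a$, so $x\in\Invobr fU$ \iaoi $a\in U$. Hence $\Invobr fU=U$ when $a\notin U$, and $\Invobr fU=U\cup(P\sm P')$ when $a\in U$. The point I would then exploit is that $a$ is the unique accumulation point of $P$, so every point of $P\sm\{a\}$ is isolated in $P$; consequently every subset of $P$ not containing $a$ is open in $P$.

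With this in hand the two cases are easy. If $a\notin U$, then $U\subseteq P'\sm\{a\}\subseteq P\sm\{a\}$, so $\Invobr fU=U$ is open in $P$. If $a\in U$, choose an open $V\subseteq P$ with $V\cap P'=U$; then both $V\cap P'=U$ and $V\sm P'\subseteq P\sm P'$ are contained in $\Invobr fU$, so $V\subseteq\Invobr fU$ and $a$ is an interior point of $\Invobr fU$. Any other point of $\Invobr fU$ lies in $P\sm\{a\}$, hence is isolated in $P$ and so interior. Therefore $\Invobr fU$ is open in $P$, $f$ is continuous, and $f$ is a retraction onto $P'$.

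The only place where anything must be watched is the subcase $a\in U$: one has to argue that adjoining the whole complement $P\sm P'$ to an open subset of $P'$ does not destroy openness in $P$, and this is precisely where the primeness of $P$ — that everything off $a$ is isolated — enters. No other step presents a genuine obstacle.
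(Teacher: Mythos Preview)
Your proof is correct. The paper actually states this lemma without proof, treating it as an elementary observation; your argument supplies the straightforward verification the paper omits, and the key point you identify --- that every point of $P\setminus\{a\}$ is isolated, so adjoining $P\setminus P'$ to an open neighborhood of $a$ preserves openness --- is exactly the content of the lemma.
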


The fact that the discrete spaces form the smallest coreflective
subcategory \Disc of \Top together with Lemma \ref{LMSSPPRIM}
imply that for any prime space $P$ all its \ssps are contained in
$\CHb P$. (They are moreover contained in $\CHbA AP$ for any
epireflective subcategory \Kat A of \Top with $\At\notin\Kat A$.)

For any space $X$ and any point $a\in X$ we define the \emph{prime factor} of $X$ at $a$ as
the \tsp on the same set in which all points different from $a$ are isolated and the
neighborhoods of $a$ are the same as in the original topology. Clearly, $X_a$ is a discrete
or prime space.

Note that a prime space is $T_2$ \iaoi it is $T_1$. Thus a prime factor of a $T_1$-space is
either a prime $T_2$-space or a discrete space.

Each \tsp $X$ is a quotient of the sum of all its prime factors.
The quotient map is obtained simply by mapping a point $x$ in a
summand $X_a$ to the same point $x$ of the space $X$.

\section{Heredity and prime factors}

Hereditary coreflective subcategories of \Top were studied in \cite{CINC2001}. In this paper
we are interested in a slightly more general situation - we use an epireflective subcategory
\Kat A (with $\At\notin\Kat A$) instead of \Top. There are two natural generalizations of
coreflective subcategories - we can study coreflective subcategories of \Kat A or  AD-classes
in \Kat A. (Both of them were already studied in \cite{CINCHER2}, AD-classes only in two
special cases $\Kat A=\ZDt$ and $\Kat A=\Tych$.)

Clearly, every coreflective subcategory \Kat B of \Kat A is an AD-class in \Kat A. (The
opposite implication does not hold. The counterexample is the subcategory of $k$-spaces in
\CReg. It is the \ADhull of compact spaces in \CReg, but the coreflective hull of compact
spaces in \CReg is the larger subcategory of $k_R$-spaces. For more details see Example
\ref{EXAKR} at the end of this section.)

It was shown in \cite{CINC2001} that a coreflective subcategory of
$\Top$ different from $\CH{Ind}$ (the coreflective hull of indiscrete spaces)
is hereditary \iaoi it is closed under the formation of
prime factors. The same result was shown in \cite{CINCHER2} for
coreflective subcategories of \Kat A (\Kat A being an
epireflective subcategory of \Top with $\At\notin\Kat A$) and
for AD-classes in \ZDt and \Tych. We would like to
generalize this result for AD-classes in more
epireflective subcategories.

The main results of this section are Theorem \ref{THMP} and its
consequences. They say that an AD-class is hereditary \iaoi it is
closed under prime factors whenever
this AD-class contains a prime space (or a space with \uv{good}
properties). Using this fact we can show in Theorem \ref{THMHAUS}
another interesting result: If $\Kat A\subseteq\Haus$, then an
AD-class in \Kat A is hereditary \iaoi it is closed under prime
factors. So if we work only with Hausdorff spaces, the desired
equivalence between heredity and closedness under prime factors is
true. In section \ref{SECTPRIM} we try to find some other cases when
this statement holds.

\subsection{When heredity implies closedness under prime factors?}

It is easy to show that for an AD-class closedness under prime
factors implies heredity. The proof follows the proof of
\cite[Theorem 2, Theorem 7]{CINCHER2}.

\begin{lemma}
Let \Kat B be additive and divisible in \Kat A, \Kat A being an
epireflective subcategory of \Top with $\At\notin\Kat A$. If \Kat
B is closed under prime factors, then it is hereditary.
\end{lemma}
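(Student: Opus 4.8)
The plan is to take an arbitrary space $X\in\Kat B$, an arbitrary subspace $Y\subseteq X$, and show $Y\in\Kat B$. The key observation is that every topological space is a quotient of the topological sum of its prime factors (as recalled just before Section 3), so it suffices to realise $Y$ as a quotient, inside $\Kat A$, of a sum of spaces already known to be in $\Kat B$. Since $\Kat B$ is assumed closed under prime factors, the candidate summands will be the prime factors $Y_a$ of $Y$ (for $a\in Y$), and these all lie in $\Kat B$ provided each $Y_a$ lies in $\Kat A$. Here I would use $\At\notin\Kat A$: an epireflective subcategory of $\Top$ not containing $\At$ contains all discrete spaces, and a prime factor is either discrete or a prime space, which is $T_0$; one then checks $Y_a\in\RH A=\Kat A$ — this is exactly the point where the proof of \cite[Theorem 2]{CINCHER2} is invoked, and it is the only place the hypothesis $\At\notin\Kat A$ is really used.

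First I would fix $X\in\Kat B$ and a subspace $Y$ of $X$. For each $a\in Y$, form the prime factor $Y_a$ of $Y$ at $a$. Second, I would argue $Y_a\in\Kat A$: indeed the identity-underlying-set map $Y_a\to Y\hra X$ together with the inclusion of $Y$ into a power of members of $\Kat A$ (coming from $X\in\Kat A$ and $Y$ a subspace of $X$) can be combined into an initial monosource from $Y_a$ into $\Kat A$ once one adjoins a discrete space to separate the isolated points; since $\At\notin\Kat A$, the requisite discrete space is available, so $Y_a\in\RH A=\Kat A$. Third, since $Y_a$ is a prime factor of $X$'s subspace and $\Kat B$ is closed under prime factors, $Y_a\in\Kat B$ for every $a\in Y$. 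Fourth, the sum $\coprod_{a\in Y}Y_a$ lies in $\Kat B$ because $\Kat B$ is additive. Finally, the canonical map $\coprod_{a\in Y}Y_a\to Y$, sending the copy of $x$ in the summand $Y_a$ to $x\in Y$, is a quotient map with codomain $Y\in\Kat A$, so divisibility of $\Kat B$ in $\Kat A$ gives $Y\in\Kat B$.

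The main obstacle is the seemingly innocuous second step, $Y_a\in\Kat A$. A prime factor $Y_a$ is in general \emph{not} a subspace of $Y$ (it has a finer topology), so one cannot simply quote closedness of $\Kat A$ under subspaces; instead one must produce an initial monosource from $Y_a$ to $\Kat A$ and exploit $\At\notin\Kat A$ to discretise the \uv{extra} isolated points. Everything else — the decomposition of $Y$ into prime factors, additivity, divisibility — is bookkeeping; the real content, and the reason the hypothesis $\At\notin\Kat A$ appears, is concentrated here, and this is why the statement points to the proofs of \cite[Theorem 2, Theorem 7]{CINCHER2} rather than reproducing the argument.
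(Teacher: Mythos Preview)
Your third step is circular. You write that \uv{since $Y_a$ is a prime factor of $X$'s subspace and $\Kat B$ is closed under prime factors, $Y_a\in\Kat B$}. But \uv{$X$'s subspace} is $Y$, and closedness of $\Kat B$ under prime factors says only that prime factors of \emph{members of $\Kat B$} lie in $\Kat B$. You do not yet know $Y\in\Kat B$ --- that is precisely what you are trying to establish --- so this step assumes the conclusion. The same slip appears in your opening paragraph, where you claim that the $Y_a$ \uv{all lie in $\Kat B$ provided each $Y_a$ lies in $\Kat A$}; membership in $\Kat A$ has nothing to do with the hypothesis on $\Kat B$.

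The missing idea, which is exactly what the paper supplies, is to relate $Y_a$ not to $Y$ but to $X_a$. Since $X\in\Kat B$ and $\Kat B$ is closed under prime factors, $X_a\in\Kat B$. Now $Y_a$ is a subspace of $X_a$ (same underlying set inclusion $Y\subseteq X$, and the neighbourhoods of $a$ in $Y_a$ are the traces on $Y$ of neighbourhoods of $a$ in $X_a$). If $a$ is non-isolated in $Y$ then $Y_a$ is a prime subspace of $X_a$, and Lemma~\ref{LMSSPPRIM} provides a retraction $X_a\to Y_a$; this is a quotient map with codomain in $\Kat A$ (since $\Kat A$ is closed under prime factors and $Y\in\Kat A$), so divisibility gives $Y_a\in\Kat B$. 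If $a$ is isolated in $Y$ then $Y_a$ is discrete and lies in $\Kat B$ trivially. Your steps 4 and 5 then finish the argument. Your elaborate step~2 about initial monosources is not wrong, but it is unnecessary: $Y_a\in\Kat A$ follows at once from $Y\in\Kat A$ and closedness of $\Kat A$ under prime factors.
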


\begin{proof}
Let $X\in\Kat B$ and $Y$ be a nonempty \ssp of $X$. We want to show that $Y\in\Kat B$. Let $a\in
Y$. The prime factor $Y_a$ is a \ssp of the corresponding prime factor $X_a$ of $X$. Since
$\Kat B$ is closed under prime factors, $X_a\in\Kat B$ and, by Lemma \ref{LMSSPPRIM},
$Y_a\in\Kat B$ as well. Since
$Y\in\Kat A$ and it is a quotient space of all its prime factors, $Y\in\Kat B$.
\qed
\end{proof}

This paper is mostly devoted to the effort to show that the opposite implication holds too
(under some assumptions on \Kat A or \Kat B).

We first need to define the space $X\tr_bY$ which was used in a similar context in
\cite{CINCHER2}.

\begin{definition}\label{DEFTRB}
If $X$ and $Y$ are \tsps, $b\in Y$ and $\{b\}$ is closed in $Y$, then we denote by $X\tr_b Y$
the \tsp on the set $X\times Y$ which has the final topology w.r.t~the family of maps
$\{f,g_a; a\in X\}$, where $\Zobr{f}X{X\times Y}$, $f(x)=(x,b)$ and $\Zobr{g_a}Y{X\times Y}$,
$g_a(y)=(a,y)$.
\end{definition}

In other words, $X\tr_bY$ is the quotient of $X\sqcup(\coprod_{a\in X}Y)$ with respect to the
map obtained as the combination of the maps $f$ and $g_a$, $a\in X$. Since the space
$X\tr_bY$ is constructed from $X$ and $Y$ using only topological sums and quotient maps, any
coreflective subcategory of \Top containing $X$ and $Y$ contains $X\tr_b Y$, too.

A local base for the topology of $X\tr_b Y$ at a point $(a,y)$, $y\ne b$, is $\{\{a\}\times
V; V$ is an open neighborhood of $y$ in $Y\}$. A local base at $(a,b)$ consists of all sets of the
form $\bigcup_{x\in U} \{x\}\times V_{x}$ where $U$ is an open neighborhood of $a$ in $X$ and
each $V_{x}$ is an open neighborhood of $b$ in $Y$.

Figure \ref{FIGTRB} depicts the space $X\tr_b Y$ by showing typical sets from the
neighborhood basis.

\begin{figure}[h]
\centerline{\includegraphics{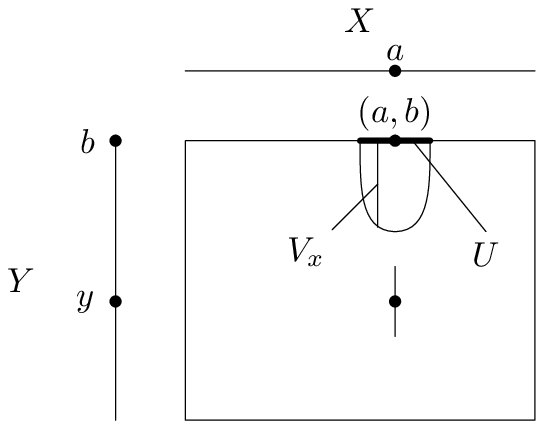}} \caption{The space $X\tr_b
Y$}\label{FIGTRB}
\end{figure}

Let $X^a_{(Y,b)}$ be the \ssp of $X\triangle_b Y$ on the subset
$\{(a,b)\}\cup(X\setminus\{a\})\times(Y\setminus\{b\})$ and $X_a$
be the prime factor of $X$ at $a$. It was
shown in \cite{CINCHER2} that, for any space $Y$ in which the
subset $\{b\}$ is closed but not open, the map $\Zobr
q{X^a_{(Y,b)}}{X_a}$ given by $q(x,y)=x$ is
quotient. This yields the following proposition:

\begin{proposition}\label{PROPTR}
Let \Kat B be an HAD-class in an epireflective subcategory \Kat A
of \Top with $\At\notin\Kat A$. Let for any $X\in\Kat B$ there
exist $Y\in\Kat B$ and a non-isolated point $b\in Y$ with $\{b\}$
being closed in $Y$ such that $X\tr_b Y$ belongs to $\Kat A$. Then
$\Kat B$ is closed under the formation of prime factors.
\end{proposition}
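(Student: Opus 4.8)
The plan is to run $X$ through the construction assembled just before the statement. Fix $X\in\Kat B$ and $a\in X$; the aim is to show $X_a\in\Kat B$. First I would invoke the hypothesis to obtain $Y\in\Kat B$ and a non-isolated point $b\in Y$ with $\{b\}$ closed in $Y$ such that $X\tr_b Y\in\Kat A$. (Note this is precisely the setting in which $X\tr_b Y$ of Definition \ref{DEFTRB} is defined, and the non-isolatedness of $b$ means $\{b\}$ is closed but not open, which is what the quoted lemma from \cite{CINCHER2} needs.)

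Next I would upgrade $X\tr_b Y\in\Kat A$ to $X\tr_b Y\in\Kat B$. By the remark following Definition \ref{DEFTRB}, $X\tr_b Y$ is a quotient of $X\sqcup\bigl(\coprod_{a\in X}Y\bigr)$; since $X,Y\in\Kat B$ and $\Kat B$ is additive, this topological sum lies in $\Kat B$, and since $X\tr_b Y\in\Kat A$ while $\Kat B$ is divisible in $\Kat A$, we get $X\tr_b Y\in\Kat B$. Then, using that $\Kat B$ is hereditary, the subspace $X^a_{(Y,b)}$ of $X\tr_b Y$ on the set $\{(a,b)\}\cup(X\setminus\{a\})\times(Y\setminus\{b\})$ also lies in $\Kat B$. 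The decisive input is now the fact recalled from \cite{CINCHER2}: the map $q\colon X^a_{(Y,b)}\to X_a$, $q(x,y)=x$, is a quotient map. Finally, $X_a\in\Kat A$ because $\Kat A$, being epireflective in $\Top$ with $\At\notin\Kat A$, is closed under the formation of prime factors; hence $X_a$ is a quotient of a member of $\Kat B$ that itself lies in $\Kat A$, and divisibility of $\Kat B$ in $\Kat A$ yields $X_a\in\Kat B$.

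The step that carries the real weight is the claim that $q$ is a quotient map, but that is imported from \cite{CINCHER2} rather than reproved here, so within this paper the argument is just careful bookkeeping with the closure properties (additive, divisible in $\Kat A$, hereditary) of $\Kat B$. Two small things should be kept honest: when $a$ is isolated in $X$ the factor $X_a$ is discrete and already belongs to $\Kat B$ (the one-point space lies in $\Kat B$ by heredity and arbitrary discrete sums by additivity), so the substantive case is when $X_a$ is a genuine prime space; and one should make sure that $X_a\in\Kat A$ is legitimately available, i.e.\ that $\At\notin\Kat A$ does force $\Kat A$ to be closed under prime factors, as recorded in the preliminaries.
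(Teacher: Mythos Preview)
Your proof is correct and follows essentially the same route as the paper's: obtain $X\tr_b Y\in\Kat B$ from additivity and divisibility, pass to the subspace $X^a_{(Y,b)}$ by heredity, and then invoke the quotient map $q\colon X^a_{(Y,b)}\to X_a$ from \cite{CINCHER2}. If anything, you are more careful than the paper in explicitly checking $X_a\in\Kat A$ before applying divisibility, a point the paper leaves implicit.
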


\begin{proof}
Let $X\in\Kat B$ and $a\in X$. We want to show that $X_a\in\Kat B$. By the assumption there
exists $Y\in\Kat A$ and $b\in Y$ such that $\{b\}$ is closed but not open and $X\tr_b
Y\in\Kat A$.

Since $X\tr_b Y$ is constructed using quotients and sums, we get $X\tr_b Y\in\Kat B$, as
well. Therefore also its \ssp $X^a_{(Y,b)}$ belongs to \Kat B and $X_a$ is a quotient of this
space.
\qed
\end{proof}

\begin{definition}
We say that a subcategory \Kat A of \Top is \emph{closed under $\tr$} if $X\tr_b Y\in\Kat A$
whenever $X,Y\in\Kat A$ and $b\in Y$.
\end{definition}

\begin{proposition}\label{PROPCLTRI1}
Let \Kat A be an epireflective subcategory of \Top with $\At\notin\Kat A$. If $\Kat A$ is
closed under $\tr$ and $\Kat B$ is hereditary, additive and divisible in \Kat A, then $\Kat
B$ is closed under prime factors.
\end{proposition}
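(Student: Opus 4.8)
The plan is to deduce Proposition~\ref{PROPCLTRI1} directly from Proposition~\ref{PROPTR}. Both statements have the same conclusion --- that $\Kat B$ is closed under prime factors --- and Proposition~\ref{PROPTR} already packages the essential construction. So the work reduces to verifying that the hypothesis \uv{$\Kat A$ is closed under $\tr$} lets us supply, for each $X\in\Kat B$, a suitable witness $Y\in\Kat B$ with a non-isolated point $b$ such that $\{b\}$ is closed in $Y$ and $X\tr_b Y\in\Kat A$.

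First I would fix $X\in\Kat B$ and seek the witness $Y$. The natural choice is to take $Y$ to be a space \emph{already in $\Kat B$} that has a closed, non-open singleton; the simplest candidate is the \Sierp-type reversed space, i.e.\ the two-point space $\{b,c\}$ in which $\{c\}$ is open and $\{b\}$ is closed but not open (equivalently $b$ is the non-isolated point). However, we must be careful: this space must actually lie in $\Kat B$. By our standing agreement every subcategory of \Top contains a space with at least two points, and since $\At\notin\Kat A$ the category $\Kat A$ (hence any AD-class in it that is nonempty in the required sense) contains all discrete spaces; but a non-discrete witness need not be automatic. The cleaner route is: if $X$ itself is discrete, then $X_a$ is discrete and discrete spaces lie in $\Kat B$ (they are quotients --- indeed summands --- of $X\in\Kat B$, or directly $X_a\prec X$ via the identity... more simply, $X_a = X$ when $X$ is discrete), so there is nothing to prove. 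If $X$ is not discrete, it has a non-isolated point $b$ with... no --- not every non-isolated point has a closed singleton. So instead I would take $Y:=X$ itself together with a point $b\in X$ such that $\{b\}$ is closed but not open, \emph{provided such a point exists}; and handle the remaining case separately.

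The cleanest uniform argument, which I expect to be the intended one, is this. Given $X\in\Kat B$ and $a\in X$, let $X_a$ be the prime factor at $a$. If $a$ is isolated in $X$, then $X_a$ is discrete and lies in $\Kat B$, so assume $a$ is non-isolated. Consider the $T_1$-fication or, more directly, use the space $\Dt$ or the \Sierp space $S=\{b,c\}$ with $\{c\}$ isolated and $\{b\}$ closed but not open: $S\in\FG\subseteq\Kat A$ need not hold, so instead I would argue that the two-point space $Y$ with a closed non-open singleton $b$ can be manufactured \emph{inside $\Kat B$}: take any non-discrete $Z\in\Kat B$ (if all of $\Kat B$ is discrete then all of $\Kat B$'s prime factors are discrete and we are done), pick a non-isolated point $z\in Z$, and pass to the prime factor $Z_z$, which is a prime space and hence $T_0$; the two-point quotient of $Z_z$ identifying all isolated points gives exactly such a $Y$, and it lies in $\Kat B$ (quotient of $Z_z$, which is a subspace --- hence by heredity in $\Kat B$ --- of $Z$... wait, $Z_z$ is not a subspace of $Z$; it is a quotient of the sum of prime factors). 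Rather: $Z_z\in\Kat B$ because $Z\in\Kat B$, $\Kat B$ is closed under prime factors would be circular, so instead use that $Z_z$ is an \ssp-plus-quotient construction --- this is the delicate point.

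The honest summary of the obstacle: the real content is producing, from the mere hypothesis that $\Kat B$ is a nonempty HAD-class, a space $Y\in\Kat B$ with a closed non-open singleton, so that Proposition~\ref{PROPTR} applies with $X\tr_b Y\in\Kat A$ guaranteed by closedness of $\Kat A$ under $\tr$. I would handle this by a dichotomy on whether $\Kat B$ consists only of sums of indiscrete spaces: if it does, every space in $\Kat B$ has the property that its prime factors are discrete (indiscrete summands contribute no prime factors with closed singletons, and one checks the prime factor at any point of an indiscrete-sum is discrete), so $\Kat B$ is trivially closed under prime factors; if it does not, then $\Kat B$ contains a space that is not a sum of indiscrete spaces, hence (being additive and divisible, closed under \ssps) contains the \Sierp space $S$, which has a closed non-open singleton, and we may take $Y=S$ for every $X$, noting $X\tr_b S\in\Kat A$ since $\Kat A$ is closed under $\tr$ and $S\in\Kat A$. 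Then Proposition~\ref{PROPTR} finishes the proof. I would write the final proof along these lines, with the $S\in\Kat B$ step being the one requiring the small argument about non-sums-of-indiscretes, and everything else being a citation of Proposition~\ref{PROPTR}.

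\begin{proof}
Let $X\in\Kat B$ and $a\in X$; we must show $X_a\in\Kat B$. If every space in $\Kat B$ is a topological sum of indiscrete spaces, then each prime factor of each space in $\Kat B$ is discrete, and discrete spaces belong to $\Kat B$, so $X_a\in\Kat B$ and we are done. Otherwise $\Kat B$ contains a space which is not a sum of indiscrete spaces; since $\Kat B$ is additive, divisible and hereditary in \Kat A, and $\At\notin\Kat A$, the smallest such class contains the \Sierp space $S$, hence $S\in\Kat B$. Let $b$ be the non-isolated point of $S$; then $\{b\}$ is closed but not open in $S$, and $S\in\Kat A$. Since $\Kat A$ is closed under $\tr$, for every $X\in\Kat B$ we have $X\tr_b S\in\Kat A$. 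Thus the hypothesis of Proposition~\ref{PROPTR} is satisfied (with $Y=S$ for all $X$), and therefore $\Kat B$ is closed under the formation of prime factors.
\qed
\end{proof}
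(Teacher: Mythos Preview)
Your overall strategy---reduce to Proposition~\ref{PROPTR} by exhibiting a single witness $Y\in\Kat B$ with a closed non-open singleton---is exactly the paper's approach. But the specific witness you choose fails in the main case.

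The gap is your claim that $S\in\Kat B$. This is only possible when $\Kat A=\Top_0$: since $\Kat A$ is epireflective and $\At\notin\Kat A$, every space in $\Kat A$ is $T_0$, and the paper notes that the largest such $\Kat A$ is $\Top_0$ while every strictly smaller one satisfies $\Kat A\subseteq\Top_1$. In the latter case the \Sierp space $S$ is not $T_1$, hence $S\notin\Kat A$, hence $S\notin\Kat B$. So your argument covers only the single case $\Kat A=\Top_0$. (Incidentally, your dichotomy \uv{sum of indiscrete spaces vs.\ not} collapses: a $T_0$ sum of indiscrete spaces is discrete, so your first case is just $\Kat B\subseteq\Disc$.)

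The paper's proof handles this with the obvious case split. If $\Kat A=\Top_0$, then indeed $S\in\Kat B$: a non-discrete $Z\in\Kat B$ either fails $T_1$ (and then contains $S$ as a two-point \ssp, so heredity gives $S\in\Kat B$) or is $T_1$ (and then the map collapsing $Z\setminus\{z\}$ to a point, for $z$ non-isolated, is a quotient onto $S\in\Top_0$, so divisibility gives $S\in\Kat B$). If $\Kat A\subseteq\Top_1$, one does not need $S$ at all: take any non-discrete $Y\in\Kat B$ and any non-isolated $b\in Y$; since $Y$ is $T_1$, $\{b\}$ is automatically closed (and not open), and closedness of $\Kat A$ under $\tr$ gives $X\tr_b Y\in\Kat A$. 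Then Proposition~\ref{PROPTR} applies. Your proof is easily repaired by inserting this dichotomy in place of the uniform appeal to $S$.
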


\begin{proof}
It suffices to choose any space $Y\in\Kat B$ and $b\in Y$ such
that $\{b\}$ is closed and not open. (We can w.l.o.g.~assume that
$\Kat B$ contains a non-discrete space, since discrete spaces are
closed under prime factors trivially. If $\Kat A=\Top_0$, then the
\Sierp space $S$ belongs to $\Kat B$ and we can take for $Y$ the space $S$.
If $\Kat A\neq \Top_0$, then
$\Kat A\subseteq \Top_1$ and in this case it suffices to take any
non-discrete space for $Y$.) By Proposition \ref{PROPTR} then
$X_a\in\Kat B$ whenever $a\in X\in\Kat B$.
\qed
\end{proof}

By \cite[Proposition 1]{CINCHER2} every quotient-reflective subcategory of \Top (in
particular $\Top_0$, $\Top_1$, $\Haus$) is closed under the operation $\tr$. It is also
relatively easy to show that the subcategories $\Kat{Reg}$, $\Kat{Tych}$, $\ZDt$ are closed
under $\tr$.

In Example \ref{EXANONCLTR} we will show that the epireflective subcategories of \Top need not be
closed under $\tr$ in general. Therefore it could be interesting to show the above result
under some less restrictive conditions on \Kat A.

Proposition \ref{PROPTR} suggests that it would be useful to have some conditions on spaces
$X$, $Y$ which imply $X\tr_b Y\in\Kat A$. Such a condition will be obtained in Theorem
\ref{THMP}. We first introduce the operation $X\dtr_b Y$, which was defined in
\cite{CINCHER2}.

\begin{definition}\label{DEFDTR}
Let $X$, $Y$ be \tsps and $b\in Y$ with the set $\{b\}$ closed in $Y$. The space $X\dtr_b Y$
is the \tsp on the set $X\times Y$ which has the initial topology w.r.t.~the family
$\Zobr{h_a}{X\times Y}{X\times Y}$, $h_a(x,y)=(x,b)$ for $x\ne a$ and $h_a(a,y)=(a,y)$.
\end{definition}

A subbase for this topology is formed by the sets $\Invobr{h_a}{U\times V} =(\{a\}\times V)
\cup (U\setminus \{a\})\times Y$, where $a\in X$, $V$ is an open neighborhood of $b$ in $Y$
and $U$ is a neighborhood of $a$ in $X$, and by the sets of the form $\{a\} \times V$, where $V$
is an open set in $Y$ not containing $b$.

The space is illustrated by Figure \ref{FIGDTR}.

\begin{figure}[h]
\centerline{\includegraphics{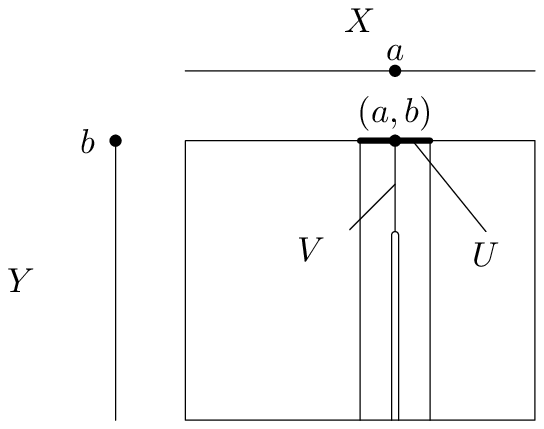}} \caption{The space $X\dtr_b
Y$}\label{FIGDTR}
\end{figure}

Observe that $X\tr_b Y \prec X\dtr_b Y$.

Since we have the initial monosource $(\Zobr{h_a}{X\dtr_b
Y}{X\times Y})$, it follows that $X\dtr_b Y\in \RHb{X,Y}$. In
other words, every epireflective subcategory of \Top containing
$X$ and $Y$ must contain $X\dtr_b Y$ too. (For the basic
properties of $X\dtr_b Y$ see \cite[Proposition 1]{CINCHER2}).)

Now we introduce a (sufficient) condition on a space $Y$, under
which $X\tr_b Y\in\Kat A$ for each $X\in\Kat A$.

\begin{definition}
Let $Y$, $Z$ be \tsps and $b\in Y$ be a point such that the set $\{b\}$ is closed in $Y$. We say that $P(b,Y,Z)$ holds if there exists an open
local base \mc B at $b$ in $Y$, a point $a\in Z$ and an open neighborhood $U_0$ of $a$ in $Z$
such that for any $V\in\mc B$ there exists a continuous map $\Zobr fYZ$ with $f(b)=a$,
$\Invobr f{U_0}=V$.
\end{definition}

We first present some simple examples of spaces with this property. Let $I=\intrv 01$. We
claim that $P(b,I,I)$ holds for any $b\in I$. We take $\mc B=\{\langle 0,\ve)\}$ if $b=0$,
symmetrically $\mc B=\{(1-\ve,1\rangle\}$ if $b=1$, otherwise $\mc B=\{(b-\ve,b+\ve)\cap
I\}$. (For $a$ and $U_0$ we can take e.g.~0 and $\langle 0,\frac12)$, 1 and
$(\frac12,1\rangle$, $b$ and $(\frac b2,\frac{1+b}2)$ respectively.)

Another example: Let $Y$ be any zero-dimensional space, $D_2$ be
the 2-point discrete space. Any point has a clopen local base,
therefore $P(b,Y,D_2)$ holds for any $b\in Y$.

\begin{example}\label{EXACOF}
Let $X$ be any infinite space with the cofinite topology and $b\in
X$. Then $P(b,X,X)$.

To show this, we take all neighborhoods of $b$ for \mc B. Let $a=b$, $u\ne b$ and
$U_0=X\setminus\{u\}$. Then $U_0$ is an open neighborhood of $a$.

Any $V\in\mc B$ has the form $V=X\setminus F$, where $F$ is a
finite set and $b\notin F$. Then there exists a bijection $\Zobr
hV{U_0}$ such that $h(b)=b$. We can define a map $\Zobr fXX$ by
$f|_V=h$ and $\Obr fF=\{u\}$. Clearly, $f$ is a continuous map.
\end{example}

\begin{lemma}\label{LMP}
Let \Kat A be an epireflective subcategory of \Top. Let $X,Y,Z\in \Kat A$, $b\in Y$ be a
non-isolated point such that $\{b\}$ is closed and $P(b,Y,Z)$ holds. Then $X\tr_b Y\in\Kat A$.
\end{lemma}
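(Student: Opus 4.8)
The plan is to show that $X\tr_b Y$ is a subspace of a product of spaces from $\Kat A$ using the hypothesis $P(b,Y,Z)$ to build the initial monosource. Recall that $X\tr_b Y \prec X\dtr_b Y$, and $X\dtr_b Y$ already lies in $\RHb{X,Y}\subseteq\Kat A$ via the initial monosource $(h_a)_{a\in X}$ into copies of $X\times Y$. So the two topologies on $X\times Y$ differ only in that $X\tr_b Y$ has a \emph{finer} topology at the points $(a,b)$: its local base there is $\{\bigcup_{x\in U}\{x\}\times V_x : U \ni a \text{ open}, V_x \ni b \text{ open}\}$, whereas $X\dtr_b Y$ only allows the coarser sets $(\{a\}\times V)\cup(U\setminus\{a\})\times Y$. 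To recover the finer topology inside an epireflective subcategory, I would produce, for each open neighborhood $U$ of a point $a\in X$ and each $V$ in the distinguished local base $\mc B$ at $b$ in $Y$, a continuous map from $X\tr_b Y$ to a space in $\Kat A$ that detects the basic open set $\bigcup_{x\in U}\{x\}\times V$; combining all these maps with the source $(h_a)$ gives an initial monosource into $\Kat A$, whence $X\tr_b Y\in\RH A=\Kat A$.

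First I would fix the data from $P(b,Y,Z)$: an open local base $\mc B$ at $b$, a point $a_0\in Z$, and an open neighborhood $U_0$ of $a_0$ in $Z$, so that for each $V\in\mc B$ there is a continuous $f_V\colon Y\to Z$ with $f_V(b)=a_0$ and $\Invobr{f_V}{U_0}=V$. Then for each open $U\subseteq X$ and each $V\in\mc B$ I would define a map $\varphi_{U,V}\colon X\tr_b Y\to Z$ by $\varphi_{U,V}(x,y)=f_V(y)$ if $x\in U$ and $\varphi_{U,V}(x,y)=a_0$ if $x\notin U$. I need to check this is continuous on $X\tr_b Y$: by the definition of the final topology it suffices to check continuity after precomposing with $f\colon X\to X\tr_b Y$, $f(x)=(x,b)$, and with each $g_a\colon Y\to X\tr_b Y$, $g_a(y)=(a,y)$. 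Precomposing with $f$ gives the map $x\mapsto f_V(b)=a_0$ for $x\in U$ and $x\mapsto a_0$ otherwise, i.e. the constant map $a_0$ — continuous. Precomposing with $g_a$ gives $y\mapsto f_V(y)$ if $a\in U$ (continuous, being $f_V$) and the constant $a_0$ if $a\notin U$ — continuous. So each $\varphi_{U,V}$ is continuous.

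Next I would verify that the source consisting of all $h_a\colon X\tr_b Y\to X\times Y$ (which are continuous since $X\tr_b Y\prec X\dtr_b Y$ and the $h_a$ are initial-source maps for the latter — alternatively check directly that $h_a$ is continuous on the final topology by precomposing with $f$ and the $g_c$) together with all $\varphi_{U,V}$ is initial. It is automatically a monosource (already $(h_a)$ separates points: $h_{a}(a,y)=(a,y)$ distinguishes points with distinct first coordinate, and $h_{c}$ for a fixed $c$ distinguishes $(a,y),(a,y')$ via the $\{a\}\times V$-type sets coming from $Y$). For initiality I must show every basic open set of $X\tr_b Y$ is the preimage of an open set under finitely many of these maps. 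The sets $\{a\}\times V$ ($V\subseteq Y$ open, $b\notin V$) and $\{a\}\times V$ for $(a,y)$ with $y\ne b$ are already preimages under $h_a$; the only genuinely new basic open sets are the neighborhoods $W=\bigcup_{x\in U}\{x\}\times V_x$ of a point $(a,b)$. Shrinking, it suffices to handle $W=\bigcup_{x\in U}\{x\}\times V$ with a single $V\in\mc B$ and $U\ni a$ open in $X$ (and then intersect with $h_a^{-1}$ of a suitable $U\times Y$-type set to handle the varying $V_x$ — here the $\dtr$-style sets available through $h_a$ do the bookkeeping). For such $W$ one computes $\varphi_{U,V}^{-1}(U_0) = \{(x,y): x\in U,\ f_V(y)\in U_0\} \cup \{(x,y): x\notin U,\ a_0\in U_0\} = (\bigcup_{x\in U}\{x\}\times V)\cup(X\setminus U)\times Y$, and intersecting with $h_a^{-1}(U\times Y) = (\{a\}\times Y)\cup(U\setminus\{a\})\times Y = U\times Y$ (taking $U$ as the neighborhood of $a$ in the first slot) yields exactly $\bigcup_{x\in U}\{x\}\times V = W$. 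Handling the general $W$ with distinct $V_x$'s then follows by intersecting finitely many such preimages and using that $\mc B$ is a local base. Hence the combined source is initial, so $X\tr_b Y$ embeds initially into a product of spaces $X\times Y$ and $Z$, all in $\Kat A$, and therefore $X\tr_b Y\in\RH A=\Kat A$.

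The main obstacle I anticipate is the bookkeeping in the initiality argument: a general basic neighborhood of $(a,b)$ has the form $\bigcup_{x\in U}\{x\}\times V_x$ with the $V_x$ varying with $x$, while each single map $\varphi_{U,V}$ only controls a fixed $V$. The resolution is that we may shrink each $V_x$ to lie in $\mc B$ and then, using that there are only set-many choices, realize $W$ as a (possibly infinite, but that is fine for initiality — we only need each open set to be a union of finite intersections of preimages) combination: for each pair $(U',V)$ with $U'\subseteq U$ open, $V\in\mc B$, the set $\varphi_{U',V}^{-1}(U_0)\cap h_a^{-1}(U\times Y)$ is an open subset of $W$ contained in $W$, and $W$ is the union of these over all $(x,V_x)$-adapted choices. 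Care is also needed to confirm $\{b\}$ closed in $Y$ is used exactly where $X\tr_b Y$ and $X\dtr_b Y$ are defined (it is part of the hypotheses), and that $b$ non-isolated is not actually needed for membership in $\Kat A$ — it is only relevant for the downstream application in Proposition \ref{PROPTR} — so the lemma as stated follows.
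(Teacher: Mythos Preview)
Your overall strategy---build an initial monosource from $X\tr_b Y$ into spaces of $\Kat A$ by combining the $\dtr$-maps with new maps into $Z$ coming from $P(b,Y,Z)$---is exactly right, and matches the paper's approach. But there is a genuine gap precisely at the point you flagged as the ``main obstacle,'' and your proposed resolution does not work.

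The problem is that your maps $\varphi_{U,V}$ use a \emph{single} $V\in\mc B$ across all fibres. Concretely, your claim that $\varphi_{U',V}^{-1}(U_0)\cap h_a^{-1}(U\times Y)$ is contained in $W=\bigcup_{x\in U}\{x\}\times V_x$ is false: that intersection equals $(U'\times V)\cup((U\setminus U')\times Y)$, and the second piece forces $V_x=Y$ for every $x\in U\setminus U'$, while the first forces $V\subseteq\bigcap_{x\in U'}V_x$. Neither need hold. Worse, no finite intersection of your preimages can be squeezed between $(a,b)$ and $W$ in general. Take $X=Y=\Com$, $b=\omega$, and $W=\{(\omega,\omega)\}\cup\bigcup_{n<\omega}\{n\}\times[n,\omega]$. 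For any finite intersection $N$ of sets $\varphi_{U_j,V_j}^{-1}(U_0)$ and $h_{a_k}^{-1}(O_k)$ containing $(\omega,\omega)$, one checks that for all sufficiently large $n$ and all $m$ in the cofinite set $\bigcap\{V_j:\omega\in U_j\}$ one has $(n,m)\in N$; choosing $m<n$ then gives $(n,m)\in N\setminus W$. So your source is not initial.

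The fix, which is what the paper does, is to let the choice of $V$ depend on $x$ \emph{within a single map}. Index the maps not by pairs $(U,V)$ but by arbitrary functions $\Zobr{f_i}X{\mc B}$, and set $h_i(x,y)=g_{f_i(x)}(y)$, where $g_V\colon Y\to Z$ is the continuous map with $g_V^{-1}(U_0)=V$ supplied by $P(b,Y,Z)$. Continuity of $h_i$ is checked exactly as you did for $\varphi_{U,V}$ (constant on $X\times\{b\}$, equal to $g_{f_i(a)}$ on each fibre $\{a\}\times Y$). Now $h_i^{-1}(U_0)=\bigcup_{x\in X}\{x\}\times f_i(x)$, and together with the projection $p\colon(x,y)\mapsto x$ one gets $p^{-1}(U)\cap h_i^{-1}(U_0)=\bigcup_{x\in U}\{x\}\times f_i(x)$, which for a suitable $i$ is exactly the basic neighbourhood $W$ (after shrinking each $V_x$ into $\mc B$). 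Adding the identity map $q$ into $X\dtr_b Y$ handles the remaining neighbourhoods and guarantees the source is mono. This repairs the argument with essentially no extra work beyond what you already wrote.
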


\begin{proof}
Using the base $\mc B$ from the definition of $P(b,Y,Z)$, we can obtain a local base at
$(a,b)\in X\tr_b Y$ consisting of sets of the form $\bigcup_{x\in U}V_x$, where $U$ is an
open neighborhood of $a$ in $X$ and $V_x\in\mc B$ for each $x\in U$. Let $\mc S=\{f_i:i\in
I\}$ be the set of all maps $\Zobr {f_i}X{\mc B}$. For any basic neighborhood $\bigcup_{x\in
U}V_x$ there is an $i\in I$ with $f_i(x)=V_x$ for every $x\in U$. Let us denote the topology
of $X\tr_b Y$ by $\topo T_\tr$.

Now we define the following maps: $\Zobr {p}{X\tr_b Y}X$ is the
projection $p(x,y)=x$, $\Zobr q{X\tr_b Y}{X\dtr_b Y}$ is the
identity map $q(x,y)=(x,y)$, and for $i\in I$ we define a map
$\Zobr {h_i}{X\tr_b Y}{Z}$ as follows: For the open neighborhood
$f_i(x)\in \mc B$ there exists a continuous map $\Zobr {g_{x,i}}YZ$
with $\Invobr{g_{x,i}}{U_0}=f_i(x)$ and $g_{x,i}(b)=a$. We put
$h_i(x,y)=g_{x,i}(y)$. Note that the maps $p$, $q$, $h_i$ are
continuous.

We claim that $\topo T_\tr$ is the initial topology with respect
to the family of maps $\{p,q,h_i; i\in I\}$. Let us denote this
initial topology by $\topo T$. Let us recall that an open subbase
for the initial topology is formed by sets $\Invobr pU$, $\Invobr
qU$, $\Invobr {h_i}U=\bigcup_{x\in X} \Invobr {g_{x,i}}U$, where $U$
is any open set in the codomain of the respective map.

$\boxed{\topo T_\tr \prec \topo T}$ We first compare open neighborhoods of points $(a,b)$
with $a\in X$. Any such neighborhood contains a basic neighborhood $\bigcup_{x\in U}V_x$
with $V_x\in\mc B$ for each $x\in U$. This set can be expressed as $\Invobr {p}{U} \cap
\Invobr{h_i}{U_0}$ for the $i\in I$ satisfying $f_i(x)=V_x$ for every $x\in U$.

As for neighborhoods of $(a,y)$, $y\neq b$, it suffices to note that they have the same
neighborhood bases in $X\tr_b Y$ and $X\dtr_b Y$.

$\boxed{\topo T \prec \topo T_\tr}$ It suffices to notice that the
subbasic sets $\Invobr pU$, $\Invobr qU$, $\Invobr {h_i}U$ belong to $\topo T_\tr$.

This family of maps forms a monosource, since it contains the
identity map. So we have found an initial monosource from $X\tr_b
Y$ with a codomain in $\Kat A$ and this implies $X\tr_b Y\in\Kat A$.
\qed
\end{proof}

\begin{theorem}\label{THMP}
Let \Kat A be an epireflective subcategory of \Top with $\At\notin\Kat A$ and \Kat B be an HAD-class in \Kat A. If \Kat B contains a space $Y$ with $P(b,Y,Z)$ for some $Z\in\Kat A$ and
a non-isolated point $b\in Y$ such that the set $\{b\}$ is closed, then \Kat B is closed
under prime factors.
\end{theorem}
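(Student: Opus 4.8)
The plan is to reduce Theorem \ref{THMP} to Proposition \ref{PROPTR}. Recall that Proposition \ref{PROPTR} says: if \Kat B is an HAD-class in \Kat A and for every $X\in\Kat B$ there exist $Y\in\Kat B$ and a non-isolated point $b\in Y$ with $\{b\}$ closed in $Y$ such that $X\tr_b Y\in\Kat A$, then \Kat B is closed under prime factors. So the only thing to verify is the hypothesis of Proposition \ref{PROPTR}, namely that for an \emph{arbitrary} $X\in\Kat B$ the space $X\tr_b Y$ lies in \Kat A, where $Y$ and $b$ are the fixed data furnished by the theorem's assumption.

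First I would dispose of the trivial case: if $X$ is discrete, then $X_a$ is discrete, hence in \Kat A and (being a sum of one-point spaces, or directly since discrete spaces are quotients of sums of spaces in \Kat B as \Kat B is nonempty and additive) in \Kat B; actually more cleanly, discrete spaces are closed under prime factors automatically, so we may assume $X$ is non-discrete, or simply not worry about it since the argument below works uniformly. The substantive step: take the space $Y\in\Kat B\subseteq\Kat A$ and the point $b\in Y$ provided by the assumption, for which $b$ is non-isolated, $\{b\}$ is closed, and $P(b,Y,Z)$ holds for some $Z\in\Kat A$. Now apply Lemma \ref{LMP} with this $X$, $Y$, $Z$: all three are objects of \Kat A (note $X\in\Kat B\subseteq\Kat A$, and $Z\in\Kat A$ by hypothesis), $b$ is a non-isolated point of $Y$ with $\{b\}$ closed, and $P(b,Y,Z)$ holds. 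Lemma \ref{LMP} then gives $X\tr_b Y\in\Kat A$.

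This verifies precisely the hypothesis of Proposition \ref{PROPTR}: for every $X\in\Kat B$ we have produced $Y\in\Kat B$ and a non-isolated point $b\in Y$ with $\{b\}$ closed in $Y$ and $X\tr_b Y\in\Kat A$ (in fact the \emph{same} $Y$ and $b$ work for all $X$, which is even stronger than what Proposition \ref{PROPTR} requires). Hence by Proposition \ref{PROPTR}, \Kat B is closed under the formation of prime factors, which is the assertion of the theorem.

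I do not expect any real obstacle here — the theorem is essentially a clean packaging of Lemma \ref{LMP} (which supplies $X\tr_b Y\in\Kat A$ from the combinatorial condition $P(b,Y,Z)$) together with Proposition \ref{PROPTR} (which converts that into closedness under prime factors for an HAD-class). The only points requiring a word of care are: checking that $b$ being \emph{non-isolated} with $\{b\}$ closed is exactly the form of hypothesis both auxiliary results want (Proposition \ref{PROPTR} phrases it as \uv{closed but not open}, which for a one-point set is the same as \uv{closed and non-isolated}), and noting that the heredity of \Kat B is used only inside Proposition \ref{PROPTR} (via passing to the subspace $X^a_{(Y,b)}$), so it is enough to quote that proposition rather than redo its argument. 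The genuinely technical content — constructing the initial monosource $\{p,q,h_i\}$ exhibiting $X\tr_b Y\in\Kat A$ — has already been carried out in Lemma \ref{LMP}, so nothing new is needed.
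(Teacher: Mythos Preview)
Your proposal is correct and takes exactly the same approach as the paper, which simply states that the theorem follows easily from Proposition \ref{PROPTR} and Lemma \ref{LMP}. Your elaboration of how the fixed $Y$, $b$, $Z$ feed into Lemma \ref{LMP} to produce $X\tr_b Y\in\Kat A$ for every $X\in\Kat B$, thereby verifying the hypothesis of Proposition \ref{PROPTR}, is precisely the intended argument.
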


\begin{proof}
Follows easily from Proposition \ref{PROPTR} and Lemma \ref{LMP}.
\qed
\end{proof}

\begin{corollary}\label{CORP}
Let \Kat A be an epireflective subcategory of \Top with $\At\notin\Kat A$ and \Kat B be an HAD-class in \Kat A. If \Kat B contains an infinite space with the cofinite topology or it
contains a non-discrete zero-dimensional space $($in particular a prime $T_2$-space$)$, then
\Kat B is closed under prime factors.
\end{corollary}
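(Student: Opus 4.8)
The plan is to deduce Corollary \ref{CORP} directly from Theorem \ref{THMP} by exhibiting, in each of the two stated cases, a space $Y\in\Kat B$, a point $Z\in\Kat A$, and a non-isolated point $b\in Y$ with $\{b\}$ closed in $Y$ such that $P(b,Y,Z)$ holds. Since $\Kat B$ is an HAD-class (in particular hereditary, additive, and divisible in $\Kat A$) and $\Kat B\subseteq\Kat A$, any witness $Y\in\Kat B$ automatically lies in $\Kat A$, so the hypotheses of Theorem \ref{THMP} will be met and the conclusion (closedness under prime factors) follows.

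First I would handle the cofinite case. Suppose $\Kat B$ contains an infinite space $X$ with the cofinite topology. Take $Y=Z=X$ and any $b\in X$. In the cofinite topology every singleton is closed, and since $X$ is infinite the point $b$ is not isolated. By Example \ref{EXACOF}, $P(b,X,X)$ holds. Hence Theorem \ref{THMP} applies with this choice, and $\Kat B$ is closed under prime factors.

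Next I would handle the zero-dimensional case. Suppose $\Kat B$ contains a non-discrete zero-dimensional space $Y$. Since $Y$ is not discrete, it has a non-isolated point $b$. Because $Y$ is zero-dimensional and $T_0$ (all zero-dimensional spaces under consideration are $T_0$, hence $T_1$ for zero-dimensional spaces as noted in the preliminaries), $\{b\}$ is closed. As observed just before Example \ref{EXACOF}, $P(b,Y,\Dt)$ holds for any $b\in Y$ because $b$ has a clopen local base: given a clopen neighborhood $V$ of $b$, its characteristic function into the two-point discrete space $\Dt$ is continuous. We need $\Dt\in\Kat A$; this holds since $\At\notin\Kat A$ forces $\Kat A$ to contain all discrete spaces, in particular $\Dt$. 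Thus Theorem \ref{THMP} applies with $Z=\Dt$, and again $\Kat B$ is closed under prime factors. For the parenthetical remark it suffices to note that every prime $T_2$-space is zero-dimensional and non-discrete (its accumulation point is not isolated), so this is a special case of the preceding argument.

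The only point requiring care is verifying that the chosen $Y$ genuinely lies in $\Kat A$ and that $\{b\}$ is closed but $b$ is non-isolated in each case — but both are immediate: $\Kat B\subseteq\Kat A$ gives $Y\in\Kat A$, the infinitude of the cofinite space and the non-discreteness of the zero-dimensional space respectively supply a non-isolated point, and in both topologies singletons are closed ($T_1$ in the cofinite case, $T_1$-ness of zero-dimensional $T_0$-spaces in the other). Since no genuinely new construction is needed, the proof is essentially a bookkeeping application of Theorem \ref{THMP} together with Example \ref{EXACOF} and the zero-dimensional example preceding it.
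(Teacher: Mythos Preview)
Your proof is correct and follows essentially the same approach as the paper, which states Corollary~\ref{CORP} without explicit proof since it is immediate from Theorem~\ref{THMP} combined with Example~\ref{EXACOF} and the zero-dimensional example preceding it. One small point worth making explicit: the reason your non-discrete zero-dimensional $Y$ is $T_0$ is that $Y\in\Kat B\subseteq\Kat A\subseteq\Top_0$ (the last inclusion because $\At\notin\Kat A$), from which $T_2$ and hence closedness of $\{b\}$ follow.
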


\begin{corollary}\label{CORP2}
Let \Kat A be an epireflective subcategory of \Top with
$\At\notin\Kat A$ and \Kat B be an HAD-class in \Kat A. If \Kat B
contains a prime space, then \Kat B is closed under prime factors.
\end{corollary}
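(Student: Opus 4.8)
The plan is to deduce the statement from Theorem~\ref{THMP}. Let $P\in\Kat B$ be a prime space with accumulation point $a$. Since $\Kat B\subseteq\Kat A$, we have $P\in\Kat A$, so $P$ itself can play the role of the space $Z\in\Kat A$ in Theorem~\ref{THMP}. Also $a$ is non-isolated by the definition of a prime space, and $\{a\}$ is closed in $P$: every $x\ne a$ is isolated, so $\{x\}$ is an open neighborhood of $x$ not containing $a$, whence $\ol{\{a\}}=\{a\}$. Hence the only thing that needs to be checked is that $P(a,P,P)$ holds; then Theorem~\ref{THMP}, applied with $Y=Z=P$ and $b=a$, immediately gives that $\Kat B$ is closed under prime factors.

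To verify $P(a,P,P)$, I would argue as follows. We may assume $a$ has a proper open neighborhood, since otherwise $\{P\}$ is an open local base at $a$ and one takes $U_0=P$ and $f=\mathrm{id}_P$, which satisfies all requirements trivially. So fix an open neighborhood $U_0\ne P$ of $a$; then there is an isolated point $u\in P\sm U_0$. Let $\mc B$ be the family of all open neighborhoods of $a$ contained in $U_0$; this is an open local base at $a$, because for any open neighborhood $W$ of $a$ the set $W\cap U_0$ lies in $\mc B$ and is contained in $W$. Take the distinguished point of the target to be $a$ again. For $V\in\mc B$ define $\Zobr{f_V}PP$ by $f_V(x)=x$ for $x\in V$ and $f_V(x)=u$ for $x\notin V$. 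Then $f_V(a)=a$, and since $V\subseteq U_0$ while $u\notin U_0$, we get $\Invobr{f_V}{U_0}=V$.

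The one step requiring a (short) argument is continuity of $f_V$. For an open $O\subseteq P$ with $a\notin O$, every point of $O$ is isolated and $\Invobr{f_V}O=(O\cap V)\cup W$, where $W$ is $\emptyset$ or $P\sm V$ according to whether $u\notin O$ or $u\in O$; as $a\in V$, the set $P\sm V$ consists of isolated points and is open, so $\Invobr{f_V}O$ is open. For open $O\ni a$ we have $O\cap V\subseteq\Invobr{f_V}O$, and $O\cap V$ is an open neighborhood of $a$; since every other point of $\Invobr{f_V}O$ is isolated, $\Invobr{f_V}O$ is again open. This establishes $P(a,P,P)$ and finishes the proof. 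I do not expect a serious obstacle here: given Theorem~\ref{THMP} the result is essentially a bookkeeping exercise, the only mildly delicate point being the degenerate case in which $a$ has no proper open neighborhood. (Alternatively, when $P$ is $T_2$ one could invoke Corollary~\ref{CORP} directly, since a prime $T_2$-space is a non-discrete zero-dimensional space; the argument above has the advantage of handling all prime spaces uniformly.)
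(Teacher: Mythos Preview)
Your proof is correct, and it takes a genuinely different route from the paper's. The paper splits into two cases: if the prime space $P$ is $T_2$, it is zero-dimensional and Corollary~\ref{CORP} applies directly; if $P$ is not $T_2$, the paper observes that $P$ contains a copy of the \Sierp space $S$ as a \ssp, which forces $\Kat A=\Top_0$, and then invokes Proposition~\ref{PROPCLTRI1} (since $\Top_0$ is closed under~$\tr$). Your argument instead verifies $P(a,P,P)$ directly for an arbitrary prime space, via the retraction-type map $f_V$ sending $V$ identically and $P\sm V$ to a fixed isolated point~$u$; this is very much in the spirit of Example~\ref{EXACOF} and of Lemma~\ref{LMSSPPRIM}. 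The advantage of your approach is uniformity: one construction handles all prime spaces at once, and it makes explicit that every prime space already witnesses the hypothesis of Theorem~\ref{THMP} with $Z=Y=P$. The paper's case split, while less streamlined here, has the side benefit of isolating the structural fact that a non-$T_2$ prime space in $\Kat A$ forces $\Kat A=\Top_0$, which is reused later in the paper. Both arguments are short; yours is the more self-contained of the two.
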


\begin{proof}
If $\Kat A$ contains a prime $T_2$-space, then the claim follows from Corollary \ref{CORP}.

Now assume that $\Kat A$ contains a non-Hausdorff prime space $P$.
To resolve this case we provide an argument which will be used
several more times in this paper.

Since $P$ is not $T_2$, there exists a point $b$ which cannot be separated from the
non-isolated point $a$ of $P$. The \ssp on the set $\{a,b\}$ is homeomorphic to the \Sierp
space $S$. Thus we get $S\in\Kat A$, $\Kat A=\Top_0$, and the result now follows from
Proposition \ref{PROPCLTRI1}. (Note that $\Top_0$ is closed under $\tr$.)
\qed
\end{proof}

We have seen that the prime $T_2$-spaces are more convenient in this context, because they
belong to $\ZDt$ and thus they are automatically contained in any epireflective
non-bireflective subcategory. Let us note that in most cases it suffices to consider the
prime $T_2$-spaces only.

Indeed, the only epireflective subcategory $\Kat A$ of \Top with
$\At\notin\Kat A$, which contains non-$T_2$ prime spaces, is
$\Top_0$. Moreover, even for $\Kat A=\Top_0$, the non-$T_2$ prime
spaces are needed only for AD-classes with $\Kat B\subseteq\FG$.

Using the Corollary \ref{CORP} we can show that if $\Kat A\subseteq\Haus$ then every
HAD-class $\Kat B\ne\Disc$ in \Kat A contains a prime space and, consequently, it is closed
under the formation of prime factors.

\begin{proposition}\label{PRHAUSND}
Let $X$ be Hausdorff and not discrete. Then $X$ contains a \ssp
$Y$, such that there exists a prime $T_2$-space $P$ which is a
quotient space of $Y$.
\end{proposition}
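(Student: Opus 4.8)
The plan is to exploit the fact that $X$ is Hausdorff but not discrete to locate a non-isolated point and build, from a small subspace around it, a prime $T_2$-space as a quotient. Since $X$ is not discrete, there is a point $a\in X$ that is not isolated. I would first try the naive guess: take $Y$ to be the subspace on $\{a\}\cup A$, where $A$ is the set of isolated points of $X$ that lie in every neighborhood... but of course $a$ need not be in the closure of the isolated points. So the first real step is to pass to a well-chosen countable-or-larger subspace. Concretely, I would choose a point $a$ which is an accumulation point of $X\setminus\{a\}$ and consider whether $a$ is an accumulation point of a \emph{discrete} subspace. If $X$ has no isolated points at all this is delicate, so instead I would argue as follows: since $\{a\}$ is closed (Hausdorff $\Rightarrow T_1$) and $a$ is not isolated, pick a net (or, in the first-countable case, a sequence) $x_\alpha\to a$ with $x_\alpha\ne a$; using Hausdorffness, thin it out so that the $x_\alpha$ together with $a$ form a subspace $Y$ in which each $x_\alpha$ is isolated in $Y$ (one can separate $x_\alpha$ from $a$ and from finitely many others, and a standard transfinite/closure argument yields a subset $S$ with $a\notin \overline{S\setminus\{s\}}$ for appropriate $s$, equivalently each point of $S$ isolated in $S\cup\{a\}$, while $a$ remains an accumulation point of $S$).

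Once such $Y=S\cup\{a\}$ is obtained, $Y$ is Hausdorff, every point of $S$ is isolated in $Y$, and $a$ is the unique accumulation point of $Y$ — that is, $Y$ is \emph{already} a prime $T_2$-space, and we can take $P=Y$ with the quotient map being the identity. The only point needing care is ensuring that after thinning, $a$ is still not isolated in the resulting subspace; this is where the main obstacle lies. If $X$ is not first countable, a convergent sequence need not exist, so I would instead argue directly at the level of neighborhood filters: let $\mathcal N$ be the neighborhood filter of $a$ in $X$; since $a$ is not isolated, $X\setminus\{a\}$ is dense near $a$, i.e.\ every $U\in\mathcal N$ meets $X\setminus\{a\}$. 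Using a maximal almost-disjoint-type / Zorn argument, select $S\subseteq X\setminus\{a\}$ maximal with respect to the property that $a\in\overline{S}$ but $a\notin\overline{S'}$ for every $S'\subsetneq S$ obtained by deleting one point — actually cleaner: take $S$ to be a set with $a\in \overline S$ which is \emph{discrete in itself} and closed-discrete in $X\setminus\{a\}$; such an $S$ exists by a standard Zorn's lemma argument since one builds it by transfinite recursion, at each step adding a point of $U_\xi\cap(X\setminus(\{a\}\cup \overline{S_\xi}))$ for a cofinal family of neighborhoods $U_\xi$, using $T_1$-ness to keep the set discrete.

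The main obstacle, as indicated, is the construction of the subspace $Y$: one must simultaneously guarantee (i) $a$ is an accumulation point of $S$, so $Y$ is not discrete, and (ii) every other point of $Y$ is isolated in $Y$, so $Y$ has \emph{exactly one} accumulation point and is genuinely prime. Condition (ii) is where Hausdorffness (or even just $T_1$) is essential, and it is the step where a careful recursion or Zorn argument is unavoidable. I expect the rest — checking $Y\hookrightarrow X$ is a subspace and $P=Y$ with the identity quotient map — to be immediate. If one prefers a non-identity quotient, an alternative for the first-countable case is to take $Y$ a convergent sequence with its limit and, if the $x_n$ are not isolated in $Y$, collapse within each of finitely many disjoint neighborhoods to force isolation; but the cleanest route is the direct construction of $Y$ as a prime subspace, making the quotient map trivial.
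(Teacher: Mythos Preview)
Your sketch has a genuine gap at exactly the point you flag as the main obstacle. The recursion you describe picks $x_\xi\in U_\xi\setminus(\{a\}\cup\overline{S_\xi})$, which guarantees that $x_\xi$ has a neighborhood missing all \emph{earlier} points $x_\eta$, $\eta<\xi$. It does nothing to prevent \emph{later} points $x_\eta$, $\eta>\xi$, from accumulating at $x_\xi$; the $U_\eta$ are neighborhoods of $a$, not constrained relative to $x_\xi$. So the resulting $S$ is left-separated in the induced well-order but need not be discrete, and $Y=S\cup\{a\}$ can have many accumulation points. The appeal to $T_1$ does not rescue this: $T_1$ lets you delete finitely many points from an open set, not a tail of length $\geq\omega$. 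The Zorn variant fails for the same reason --- unions of chains of closed discrete sets need not be closed or discrete.

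The paper's proof sidesteps this entirely by not insisting on a prime \emph{subspace}. It builds, by transfinite recursion using Hausdorffness, a pairwise disjoint family of nonempty open sets $U_\beta$, $\beta<\alpha$, with $a\notin\overline{\bigcup_{\eta\le\beta}U_\eta}$ at each stage, stopping exactly when $a\in\overline{\bigcup_{\beta<\alpha}U_\beta}$. Then $Y=\{a\}\cup\bigcup_\beta U_\beta$ is typically far from prime, but the map $q\colon Y\to\alpha\cup\{\alpha\}$ sending $a\mapsto\alpha$ and $U_\beta\mapsto\{\beta\}$ is a quotient onto a prime $T_2$-space. The point is that one only ever achieves $a\in\overline{\bigcup U_\beta}$, not $a\in\overline{\{x_\beta\}}$ for a selector; collapsing each $U_\beta$ to a single point via the quotient is precisely what converts the former into a prime space. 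This is why the statement asks for a quotient of a subspace rather than merely a subspace, and your attempt to make the quotient trivial is where the argument breaks.
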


\begin{proof}
Let $a$ be any non-isolated point in $X$. We would like to get a \ssp $Y$ in which $a$ is
again non-isolated and which contains enough disjoint open subsets.

By transfinite induction we construct a system $U_\beta$, $\beta<\alpha$, of non-empty open
subsets of $X$ such that for each $\beta,\gamma<\alpha$ the following holds:\\
(1) If $\beta\ne\gamma$ then $U_\beta\cap U_\gamma = \emps$;\\
(2) $a\in V_\beta = X\setminus \ol{\bigcup\limits_{\eta\leq\beta} U_\eta}=
\Int (X\setminus \bigcup\limits_{\eta\leq\beta} U_\eta)$;\\
(3) if $\gamma<\beta$ then $U_\beta\subseteq V_\gamma$;\\
(4) $a\in \ol{\bigcup\limits_{\eta<\alpha} U_\eta}$.

$\beta=0$: Since $a$ is non-isolated, there exists $b\ne a$ in $X$. By Hausdorffness we have
non-empty open sets $U$, $V$ with $U\cap V=\emps$, $a\in V$, $b\in U$. We put $U_0:=U$. Since
$a\in V\subseteq X\setminus U$ and $V$ is open, the condition $a\in V_0=\Int(X\setminus U_0)$
is fulfilled. The conditions (1) and (3) are vacuously true in this step of induction.

Now suppose that $U_\gamma$ for $\gamma<\beta$ have already been defined. There are two
possibilities. Either $a\in \ol{\bigcup\limits_{\eta<\beta} U_\eta}$ and we can stop the
process (putting $\alpha:=\beta$) or $a\notin \ol{\bigcup\limits_{\eta<\beta} U_\eta}$.

In the latter case the set $W:= X\setminus \ol{\bigcup\limits_{\eta<\beta} U_\eta}$ is an
open neighborhood of $a$ such that $W \cap (\bigcup\limits_{\eta<\beta} U_\eta)=\emps$. Since
$a$ is not isolated, there exists $b\in W$, $b\ne a$. Again, by $T_2$-axiom, there exist open
sets $U$, $V$ such that $U\cap V=\emps$, $a\in V$, $b\in U$. We put $U_\beta:= U\cap W$.

Since $U_\beta\subseteq W$ and $W \cap (\bigcup\limits_{\eta<\beta} U_\eta)= \emps$, we do not
violate (1).

The point $a$ belongs to the open set $V\cap W$ and $(V\cap W)\cap U_\gamma = \emps$ for
every $\gamma\leq\beta$, thus we get $a\in \Int(X\setminus \bigcup\limits_{\eta\leq\beta}
U_\eta)$, so (2) is fulfilled as well.

For $\gamma<\beta$ we have $W\subseteq V_\gamma = X\setminus
\ol{\bigcup\limits_{\eta\leq\gamma} U_\eta}$, thus $U_\beta \subseteq V_\gamma$ and (3)
holds.

The condition (4) does make sense only at the end of induction, when we have finished the process
and said, what $\alpha$ is. Note, that this procedure must stop at some ordinal $\alpha$,
otherwise we would obtain a proper class of open subsets of $X$.

Now we put $Y:=\{a\}\cup (\bigcup\limits_{\beta<\alpha}U_\beta)$. The prime space $P$ will be
obtained as the space on the set $\alpha\cup\{\alpha\}$ which is quotient with respect to
$\Zobr qYP$ defined by $q(a)=\alpha$ and $\Obr q{U_\beta}=\{\beta\}$ for any $\beta<\alpha$.
By (1) and (2) the map $q$ is well-defined.

Since $\Invobr q{\{\beta\}}=U_\beta$, each $\beta<\alpha$ is isolated. By (4) and $\inv
q(\alpha)=\{a\}$, the point $\alpha$ is not isolated. Hence $P$ is a prime space.

Since the set $\Invobr q{\{\gamma\in\alpha\cup\{\alpha\}; \gamma>\beta\}}=V_\beta \cap Y$ is
open in $Y$ for each $\beta<\alpha$, the prime space $P$ is $T_2$ (every isolated point can
be separated from the accumulation point).
\qed
\end{proof}

From Proposition \ref{PRHAUSND} and Corollary \ref{CORP} we get

\begin{theorem}\label{THMHAUS}
If \Kat A is an epireflective subcategory of \Top such that $\Kat A \subseteq\Haus$ and \Kat
B is an HAD-class in \Kat A, then \Kat B is closed under prime factors.
\end{theorem}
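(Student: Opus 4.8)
The plan is to deduce Theorem \ref{THMHAUS} from Proposition \ref{PRHAUSND} together with Corollary \ref{CORP}. The trivial case is when $\Kat B=\Disc$: discrete spaces have no non-isolated points, so every prime factor of a discrete space is discrete, hence $\Kat B$ is vacuously closed under prime factors. So we may assume $\Kat B$ contains a space that is not discrete; call it $X$. Since $\Kat A\subseteq\Haus$, the space $X$ is Hausdorff and not discrete, so Proposition \ref{PRHAUSND} applies.

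By Proposition \ref{PRHAUSND} there is a subspace $Y$ of $X$ and a prime $T_2$-space $P$ which is a quotient of $Y$. Now I use that $\Kat B$ is an HAD-class in $\Kat A$: since $\Kat B$ is hereditary, $Y\in\Kat B$; the prime $T_2$-space $P$ is then a quotient of $Y\in\Kat B$, and $P\in\Kat A$ because $P$ is Hausdorff (indeed zero-dimensional) and $\Kat A$ is epireflective, hence closed under subspaces and products — but more simply, $P$ is a quotient of $Y$ inside $\Kat A$; as $\Kat B$ is divisible in $\Kat A$, divisibility gives $P\in\Kat B$. Thus $\Kat B$ contains a prime $T_2$-space, which is in particular a non-discrete zero-dimensional space.

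Finally, apply Corollary \ref{CORP}: an HAD-class in an epireflective subcategory $\Kat A$ of $\Top$ with $\At\notin\Kat A$ which contains a non-discrete zero-dimensional space (a prime $T_2$-space qualifies) is closed under the formation of prime factors. Note $\At\notin\Kat A$ holds automatically here because $\At$ is not Hausdorff and $\Kat A\subseteq\Haus$. This yields the conclusion.

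The only real point requiring care is the membership $P\in\Kat A$ so that divisibility of $\Kat B$ in $\Kat A$ can be invoked — but this is immediate since $P$ is a prime $T_2$-space, all such spaces are zero-dimensional (hence in $\ZDt$), and $\ZDt$ is the smallest epireflective non-bireflective subcategory of $\Top$, so $P\in\ZDt\subseteq\Kat A$. I expect no genuine obstacle; the entire content of the theorem has been front-loaded into Proposition \ref{PRHAUSND} and Corollary \ref{CORP}, and the proof is essentially a two-line assembly with the $\Kat B=\Disc$ case peeled off first.
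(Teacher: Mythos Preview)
Your proof is correct and follows essentially the same approach as the paper, which derives the theorem in one line from Proposition~\ref{PRHAUSND} and Corollary~\ref{CORP}. Your version simply fills in the details the paper leaves implicit: peeling off the case $\Kat B=\Disc$, using heredity to get $Y\in\Kat B$, noting $P\in\ZDt\subseteq\Kat A$ so that divisibility applies, and checking $\At\notin\Kat A$.
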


\begin{corollary}
Let \Kat A be an epireflective subcategory of \Top such that $\Kat A \subseteq\Haus$. For
every HAD-class \Kat B in \Kat A there exists a class $\Kat S$ of prime spaces such that
$\Kat B=\ADA AS$.
\end{corollary}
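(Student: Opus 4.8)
The plan is to take $\Kat S$ to be the class of all prime spaces that belong to $\Kat B$, and to verify $\Kat B=\ADA AS$ by proving the two inclusions. The inclusion $\ADA AS\subseteq\Kat B$ is immediate: $\Kat S\subseteq\Kat B$, and $\Kat B$ is an AD-class in $\Kat A$, while $\ADA AS$ is by definition the smallest such class containing $\Kat S$. So the whole content lies in the reverse inclusion, and the crucial input is Theorem \ref{THMHAUS}: since $\Kat B$ is an HAD-class in $\Kat A\subseteq\Haus$, it is closed under the formation of prime factors.

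For the reverse inclusion take $X\in\Kat B$. Then $X$ is a quotient of the topological sum $\coprod_{a\in X}X_a$ of all its prime factors, and by Theorem \ref{THMHAUS} each $X_a$ again lies in $\Kat B$. Each $X_a$ is either a prime space, in which case $X_a\in\Kat S\subseteq\ADA AS$, or a discrete space, and to absorb the discrete prime factors I would first show $\Disc\subseteq\ADA AS$. If $\Kat B$ contains a non-discrete space, a prime factor of it at a non-isolated point is a (non-discrete) prime space lying in $\Kat B$ by Theorem \ref{THMHAUS}, so $\Kat S\neq\emptyset$; fixing $P\in\Kat S$, the sum of $\kappa$ copies of $P$ with each copy collapsed to a single point is a quotient map onto the discrete space of cardinality $\kappa$ (each singleton has a clopen preimage), and all discrete spaces lie in $\Kat A$ because $\At\notin\Kat A$; hence $\Disc\subseteq\ADA AS$. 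If instead $\Kat B=\Disc$ (the only remaining case, since an HAD-class consisting of discrete spaces and containing a two-point space is $\Disc$), then $\Kat S=\emptyset$ and the assertion $\Disc\subseteq\ADA AS$ just says that $\Disc$ is the smallest AD-class in $\Kat A$, which holds because $\At\notin\Kat A$: for any two-point $X$ the space $\Dt$ is a quotient of $X\sqcup X$, so $\Dt$, hence all of $\Disc$, lies in every AD-class. In all cases every $X_a$ lies in $\ADA AS$, so $\coprod_{a\in X}X_a$ is a topological sum of members of $\ADA AS$; it lies in $\Kat A$ because $\Kat A$ is closed under topological sums ($\At\notin\Kat A$), hence it lies in $\ADA AS$ by additivity; and since $X$ is a quotient of it with $X\in\Kat A$, divisibility of $\ADA AS$ in $\Kat A$ gives $X\in\ADA AS$.

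I expect the only genuine obstacle to be the treatment of the discrete prime factors: in the ordinary (hereditary) coreflective setting one simply observes that a two-point discrete space is a subspace of a prime $T_2$-space already present, but here one must realize the discrete spaces as quotients of topological sums of prime spaces, which is exactly what the collapsing construction above does, and this is what forces the degenerate case $\Kat B=\Disc$ (equivalently $\Kat S=\emptyset$) to be split off. Everything else — rewriting $X$ through its prime factors and invoking the closure properties of $\ADA AS$ — is routine bookkeeping with the definitions.
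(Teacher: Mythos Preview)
Your argument is correct and is precisely the intended one: the paper states this corollary without proof, expecting the reader to combine Theorem~\ref{THMHAUS} (closure under prime factors) with the fact that every space is a quotient of the sum of its prime factors. Your careful treatment of the discrete prime factors and the degenerate case $\Kat B=\Disc$ fills in exactly the details the paper leaves implicit.
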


\subsection{Two related examples}

In the rest of this section we present two examples which are connected with HAD-classes and
hereditary coreflective subcategories. In the first one we will deal with closedness of
epireflective subcategories under $\tr$. The second one is an example of an AD-class in
\CReg which is not coreflective in \CReg.

We have observed in Proposition \ref{PROPCLTRI1} that if \Kat A is closed under $\tr$ then
every HAD-class in \Kat A is closed under prime factors. We also noticed that this condition is
fulfilled for many familiar epireflective subcategories of \Top. Now we provide an
example showing that it does not hold in general.

We first recall the notion of a strongly rigid space. A \tsp $X$ is called \emph{strongly
rigid} if any continuous map $\Zobr fXX$ is either constant or $id_X$. (See
\cite{KANRAJRIG1}, it should be noted that such spaces are called rigid by some authors.) We
will show in Example \ref{EXANONCLTR} that for a strongly rigid space which is not \uv{too trivial} $X\tr_b X \in \RHb X$
does not hold.

\begin{lemma}\label{LMRIG}
Let $X$ be a \tsp and $b\in X$.
If $X$ is a strongly rigid space and $X\tr_b X\in \RHb X$, then
for $x\ne b$ the set $\{U\times V; U$ is an open neighborhood of
$x$ and $V$ is an open neighborhood of $b\}$ is a local base for
the topology of $X\tr_b X$ at $(x,b)$, i.e.,
this topology has the same local base at the point $(x,b)$ as the product topology.
\end{lemma}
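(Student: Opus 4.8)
The plan is to compare the topology of $X\tr_b X$ with the product topology at the point $(x,b)$ and show they have the same neighbourhoods there. One inclusion is immediate: the maps $\Zobr fX{X\times X}$, $f(x')=(x',b)$, and $\Zobr{g_a}X{X\times X}$, $g_a(y')=(a,y')$, are continuous into the product, so the product topology is coarser than the topology $\topo T_\tr$ of $X\tr_b X$; hence every \uv{rectangle} $U\times V$ with $U,V$ open in $X$ is already $\topo T_\tr$-open, in particular a $\topo T_\tr$-neighbourhood of $(x,b)$ whenever $x\in U$ and $b\in V$. So the real content is the reverse: I must show that \emph{every} $\topo T_\tr$-neighbourhood of $(x,b)$ contains such a rectangle.

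To describe $\topo T_\tr$-neighbourhoods I would use the hypothesis $X\tr_b X\in\RHb X$, which means $X\tr_b X$ embeds into a power $X^I$; composing this embedding with the projections yields an initial monosource $(\Zobr{\phi_i}{X\tr_b X}X)_{i\in I}$. Consequently the sets $\bigcap_{i\in F}\Invobr{\phi_i}{W_i}$, with $F\subseteq I$ finite and $W_i$ an open neighbourhood of $c_i:=\phi_i(x,b)$, form a neighbourhood base at $(x,b)$, and since a finite intersection of rectangles is a rectangle it is enough to find, for each fixed $i$, a rectangle around $(x,b)$ inside $\Invobr{\phi_i}{W_i}$. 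This is where strong rigidity enters: writing $\phi=\phi_i$, the compositions $\psi=\phi\circ f\colon X\to X$ (so $\psi(x')=\phi(x',b)$) and $\chi_a=\phi\circ g_a\colon X\to X$ (so $\chi_a(y')=\phi(a,y')$, $a\in X$) are continuous, hence each is constant or equal to $id_X$, and they are linked by $\chi_a(b)=\psi(a)$.

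The remainder is a short case analysis on $\psi$. If $\psi=id_X$, then for every $x'\ne b$ the equality $\chi_{x'}(b)=x'\ne b$ rules out $\chi_{x'}=id_X$, so $\chi_{x'}$ is the constant $x'$, i.e. $\phi(x',y')=x'$ for all $x'\ne b$ and all $y'$; here $c_i=\psi(x)=x\ne b$, and since $\{b\}$ is closed I may replace $W_i$ by $W_i\setminus\{b\}$, after which $W_i\times X\subseteq\Invobr\phi{W_i}$. If $\psi$ is constant with value $c$, then $c_i=c$ and $\chi_a(b)=c$ for all $a$: when $c\ne b$ no $\chi_a$ can be $id_X$, so $\phi$ is constant and $\Invobr\phi{W_i}=X\times X$; when $c=b$ each $\chi_a$ is $id_X$ or the constant $b$, so $\phi(a,y')\in\{y',b\}$ and therefore $X\times W_i\subseteq\Invobr\phi{W_i}$ (using $b=c_i\in W_i$). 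In every case $\Invobr{\phi_i}{W_i}$ contains a rectangle $U_i\times V_i$ with $x\in U_i$, $b\in V_i$ and $U_i,V_i$ open in $X$; intersecting over $i\in F$ gives $\bigl(\bigcap_{i\in F}U_i\bigr)\times\bigl(\bigcap_{i\in F}V_i\bigr)\subseteq\bigcap_{i\in F}\Invobr{\phi_i}{W_i}$, which together with the first paragraph shows that the rectangles form a local base at $(x,b)$.

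The main obstacle is really just the first branch of the case analysis: one would like to separate $x$ from $b$ inside $X$, but $X$ need not be $T_1$, so this is impossible in general; the substitute is precisely the standing hypothesis that $\{b\}$ is closed, which lets me shrink $W_i$ away from $b$. Conceptually, the one idea doing the work is that $X\tr_b X\in\RHb X$ together with strong rigidity forces each coordinate map $\phi_i$ to be, on the slices through $(x,b)$, either a coordinate projection or a constant — maps whose preimages of open sets manifestly contain rectangles; the rest is routine.
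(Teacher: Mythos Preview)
Your proof is correct and follows essentially the same route as the paper: both use that $X\tr_b X\in\RHb X$ gives an initial monosource into $X$, then exploit strong rigidity to analyse each coordinate map via its restrictions to the slices $X\times\{b\}$ and $\{a\}\times X$, with the same three-way case split (identity on the horizontal slice, constant $\ne b$, constant $=b$). The paper goes slightly further and explicitly enumerates all of $C(X\tr_b X,X)$ as the constants, the two projection-like maps $f_1,f_2$, and the family $f_A$, whereas you stop once you have a rectangle in each subbasic preimage; your version is therefore a bit more economical, and your explicit use of the closedness of $\{b\}$ to shrink $W_i$ in the $\psi=\mathrm{id}_X$ case makes a step visible that the paper leaves to the reader.
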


\begin{proof}
Recall, that the underlying set of $X\tr_b X$ is $X \times X$ (see Definition \ref{DEFTRB}).
Clearly, $X\tr_b X\prec X\times X$. So it remains to show that any neighborhood of $(x,b)$ in
$X\tr_b X$ contains a neighborhood of the form $U\times V$ with $U$ and $V$ as above.
Since we assume that $X\tr_b X\in\RHb X$, the space $X\tr_b X$ has the initial topology
w.r.t.~the family $C(X\tr_b X, X)$; i.e., the subbase for the topology of this space consists
of sets $\Invobr fU$ where $f\in C(X\tr_b X, X)$ and $U$ is open subset of $X$. (By $C(Y,Z)$
we mean the family of all continuous mapping between spaces $Y$ and $Z$.)

The \ssps of $X\tr_b X$ on the sets $\{a\}\times X$ for any $a\in
X$ and $X\times\{b\}$ are homeomorphic to $X$. To be more precise,
the homeomorphisms are given by $h_a(a,x)=x$ (between the \ssp
$\{a\}\times X$ and $X$) and $h(x,b)=x$ (between the \ssp
$X\times\{b\}$ and $X$). Thus for any $f\in C(X\tr_b X,X)$ the
restrictions to these subspaces are either constant or
coincidental with $h_a$ resp.~$h$. We next investigate in detail all
maps $f$ in $C(X\tr_b X,X)$.

First, assume that $f|_{X\times\{b\}}$ is not constant. Then $f(x,b)=x$ for any $x\in X$.
Thus for $a\ne b$ we get $f(a,b)=a\ne b=h_a(a,b)$. Therefore the restriction of $f$ to the
\ssp $\{a\}\times X$, $a\ne b$, is the constant map $f(a,x)=a$. For the \ssp $\{b\}\times X$
we have two possibilities: $h_b$ or a constant map. In this case we obtain two continuous
maps: $f_1$ such that $f_1(x,y)=x$ and $f_2$ given by $f_2(x,y)=x$ for $x\ne b$ and
$f_2(b,y)=y$.

The second possibility remains: $f(x,b)=a_0$ for any $x\in X$. If
$a_0\ne b$ then for any $a\in X$ we have $f(a,b)=a_0\ne
b=h_a(a,b)$ and $f$ is a constant map. Thus the only interesting
case is $a_0=b$. In this case some restrictions are equal to
$h_a$'s and some are constant. \Ie, every such map corresponds to
a subset $A$ of $X$ in the following way: $f_A(x,y)=y$ if $x\in A$
and $f_A(x,y)=b$ otherwise.

We showed that the family $C(X\tr_b X,X)$ consists precisely of all constant maps, the maps
$f_1$, $f_2$ and the maps of the form $f_A$, $A\subseteq X$. The set $U\times V$ can be
obtained as $\Invobr{f_X}V \cap \Invobr{f_1}U$. Moreover, every subbasic set $\Invobr fU$, where $f\in C(X\tr_b X,X)$,
contains a subset of the form $U\times V$. Thus such sets from a local base.
\qed
\end{proof}

\begin{example}\label{EXANONCLTR}
If $X$ is a strongly rigid space and $\bigcap_{x\in X} U_x$ is an intersection of open
neighborhoods of a point $b$ in $X$ which fails to be a neighborhood, then the set
$\bigcup_{x\in X} \{x\}\times U_x$ is open in $X\tr_b X$, but it does not contains any subset
from the local basis described in Lemma \ref{LMRIG}. Therefore in such case $X\tr_b X\notin
\RHb X$.

This means that to obtain a counterexample, it suffices to have a strongly rigid space with a
non-isolated point $b$ such that at the same time $\{b\}$ is an intersection of a family
$U_i$, $i\in I$, of open sets, with $\card I\leq\card X$. Any of the examples of strongly
rigid $T_2$-spaces constructed in \cite{COOK1967}, \cite{DEGROOT1959} or \cite{KANRAJRIG1}
satisfies this condition.
\end{example}

We next include an example of an AD-class which is not
coreflective. We will work in the epireflective subcategory $\Kat
A=\CReg$ of all completely regular (Tychonoff) spaces.

A \tsp $X$ is called \emph{$k_R$-space} if it is completely
regular and if every map $\Zobr fX\R$, whose restriction to every
compact subset $K\subseteq X$ is continuous, is continuous on $X$. For
more information about $k_R$-spaces see e.g.~\cite{HUSEK1971} or
\cite{LUKACS2004}.

A \tsp $X$ is a \emph{$k$-space} if a subset $U\subseteq X$ is
open whenever $U\cap K$ is open for every compact subset
$K\subseteq X$. The class of all $k$-spaces is the coreflective
hull of compact spaces in \Top. In the next example we will denote
the corresponding coreflector by $C$. It is known that $X$ and
$CX$ have precisely the same compact subsets and the relative
topology on every compact subset is the same.

\begin{example}\label{EXAKR}
E.~Michael constructed in \cite[Lemma 3.8]{MICHAELKR} a normal
$k_R$-space $X$ such that $CX$ is not regular. This means that $X$
is not a $k$-space. We will show that $RCX=X$, where $R$ denotes
the $\CReg$-reflection. This implies that
$X$ is in the coreflective hull of compact spaces in \CReg, since
there exists a quotient map $q$ from the sum of all compact \ssps
of $X$ to $CX$ and consequently the map $Rq$ with the codomain
$RCX=X$ is a \CReg-extremal epimorphism. (Since every reflector
is coadjoint functor, it preserves regular epimorphisms.) But $X$
is not in the AD-hull of compact spaces in \CReg, since $CX\neq
X$. Therefore the AD-hull of compact spaces in \CReg (which
consists precisely of Hausdorff $k$-spaces) is an example of an
AD-class in \CReg which is not a coreflective subcategory of
\CReg.

To show that $X$ is the \CReg-reflection of $CX$ it suffices to
show that the continuous maps from both spaces to \R are the same. A
map $\Zobr f{CX}\R$ is continuous \iaoi all restrictions $f|_K$
with $K$ compact are continuous. Since compact subsets of $X$ and
$CX$ are the same and moreover the corresponding \ssps are
homeomorphic, this implies that $f$ is continuous as the map from
$X$ to \R. (Since $X$ is a $k_R$-space and we have shown that the
restrictions on compact subsets are continuous.)
\end{example}

\section{The space $\Aom$ and HAD-classes}

In \cite{SLEZIAK3} the space $\Aom$ was constructed for any prime
space $A$ and it was shown that the prime factor $(\Aom)_a$ is a
generator of the hereditary coreflective hull of the space $A$ in
\Top. The goal of this section is to show some useful
properties of the space $\Aom$. Namely we will prove that this
space is zero-dimensional for any prime $T_2$-space $A$. This
implies that, if $A$ is a prime $T_2$-space, then $\Aom$ is contained in every epireflective
subcategory $\Kat A$ with $\At\notin\Kat A$. Using this property of
$\Aom$ we can show that if \Kat B is an HAD-class in \Kat A which contains a prime space
then the coreflective hull \CH B of \Kat B in \Top is hereditary.

We first introduce some notions which are necessary to define the space $\Aom$. This space is
very similar to the space $S_\omega$ defined in \cite{ARHFRA}, the difference lies in using
an arbitrary prime space $A$ instead of the space $\Com$ (see Definition \ref{DEFCALP}) and
\Asums A instead of sequential sums. Let us note that the space $S_\omega$ is also a special case of
the space $\mc T_{\vec{\mc F}}$ defined in \cite{TODUZKSP}.

We first recall the definition of \Asums A from
\cite{SLEZIAK3}. Apart from the sequential sums of \cite{ARHFRA} this construction is also
similar to the brush of \cite{KANNAN1981}.

\begin{definition}
Let $A$ be a prime space with the accumulation point $a$. Let us denote
$B:=A\setminus\{a\}$. Suppose that for each $b\in B$ we are given a topological space $X_b$
and a point $x_b\in X_b$. Then the \emph{\Asum A{}} $\SumA A{X_b}{x_b}$ is the topological
space on the set $F=A\cup(\bigcup\limits_{b\in B} \{b\}\times (X_b\setminus \{x_b\}))$ which
is quotient with respect to the map $\Zobr{\varphi}{A\sqcup(\coprod\limits_{b\in B} X_b)}F$,
$\varphi(x)=x$ for $x\in A$, $\varphi(x)=(b,x)$ for $x\in X_b\setminus\{x_b\}$ and
$\varphi(x_b)=b$ for every $b\in B$.
\end{definition}

This means that the \Asum A is defined simply by identifying each $x_b\in X_b$ with the
corresponding point $b\in A$.

For the sake of convenience, we adopt some terminology from
\cite{KANNAN1981}. The map $\varphi$ is called the \emph{defining
map} of the \filsum A. The \ssp of $\Suma{X_b}{x_b}$ on the subset
$\Obr\varphi{X_b}=\{b\}\cup (\{b\} \times \{X_b\setminus
\{x_b\}\})$ is called the \emph{bristle}.

The following easy lemma states that the bristles are homeomorphic
to the spaces from which the \Asum A is constructed.

\begin{lemma}\label{LMBRIS}
The space $X_b$ is homeomorphic to the \ssp of the space
$\Suma{X_b}{x_b}$ on the subset $\Obr\varphi{X_b}=\{b\}\cup (\{b\}
\times \{X_b\setminus \{x_b\}\})$. $($The homeomorphism is given by the
restriction of $\varphi$ to the summand $X_b$.$)$
\end{lemma}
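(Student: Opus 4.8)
The space $X_b$ is homeomorphic to the subspace of $\Suma{X_b}{x_b}$ on the set $\Obr\varphi{X_b}=\{b\}\cup(\{b\}\times\{X_b\setminus\{x_b\}\})$, the homeomorphism being the restriction of the defining map $\varphi$ to the summand $X_b$.

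Let me sketch a proof.

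The plan is to produce an explicit continuous two‑sided inverse to $\varphi|_{X_b}$, where $\varphi$ denotes the defining map of the \filsum A{}. First I would note that $\Zobr{\varphi|_{X_b}}{X_b}{\Obr\varphi{X_b}}$ is a continuous bijection: it is a restriction of the continuous quotient map $\varphi$, and on the summand $X_b$ the map $\varphi$ is injective, since $\varphi(x_b)=b$ and $\varphi(x)=(b,x)$ for $x\ne x_b$ are pairwise distinct and exhaust $\Obr\varphi{X_b}$. Thus the whole content of the lemma is that this continuous bijection is open, i.e.\ that its inverse is continuous.

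To obtain the inverse I would define a map $\Zobr{\psi}{A\sqcup(\coprod_{c\in B}X_c)}{X_b}$ by letting $\psi$ be the identity on the summand $X_b$ and the constant map with value $x_b$ on every other summand (that is, on $A$ and on each $X_c$ with $c\ne b$). This $\psi$ is continuous, being continuous on each summand. Next I would check that $\psi$ is constant on the fibres of $\varphi$: the only non‑singleton fibres are $\Invobr\varphi{c}=\{c\}\cup\{x_c\}$ for $c\in B$, consisting of the point $c$ of the summand $A$ together with the point $x_c$ of $X_c$; and for such $c$ we have $\psi(c)=x_b$, while $\psi(x_c)=x_b$ as well — because $\psi(x_b)=x_b$ if $c=b$, and $\psi(x_c)=x_b$ by constancy if $c\ne b$. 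Since $\varphi$ is a quotient map, $\psi$ factors through it: $\psi=\bar\psi\circ\varphi$ for a (unique) continuous $\Zobr{\bar\psi}{\Suma{X_b}{x_b}}{X_b}$.

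Finally I would restrict $\bar\psi$ to the bristle. For every $x\in X_b$ we have $\bar\psi(\varphi(x))=\psi(x)=x$, so $(\bar\psi|_{\Obr\varphi{X_b}})\circ(\varphi|_{X_b})=\mathrm{id}_{X_b}$; since $\varphi|_{X_b}$ is already known to be a bijection onto $\Obr\varphi{X_b}$, the map $\bar\psi|_{\Obr\varphi{X_b}}$ is a genuine two‑sided inverse of it, and it is continuous as a restriction of $\bar\psi$. Hence $\varphi|_{X_b}$ is a homeomorphism, which is exactly the claim. The only step with any substance is the construction of $\psi$ and the (routine) verification that it descends to the quotient; bijectivity and continuity of $\varphi|_{X_b}$ itself are immediate from the definitions.
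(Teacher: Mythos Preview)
Your proof is correct. The paper does not actually supply a proof of this lemma; it is introduced as ``the following easy lemma'' and stated without argument. Your construction of the retraction $\bar\psi$ via the universal property of the quotient map $\varphi$ is a clean and complete justification, so there is nothing to compare against.
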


Now we are ready to define the space $\Aom$ using the \Asum A. We first define inductively
the spaces $A_n$ for $n\in\N$. We put $A_1=A$ and $A_{n+1}=\Suma{A_n}a$. Note that $A_n$ is a
\ssp of $A_{n+1}$ for each $n$. Figure \ref{FIGAOM} depicts the space $A_3$ for $A=\Com$.

\begin{figure}[h]
\centerline{\includegraphics{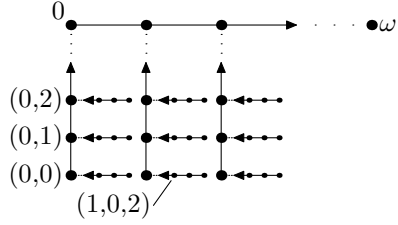}} \caption{The space $A_3$ for
$A=\Com$}\label{FIGAOM}
\end{figure}

Another possibility how to obtain $A_{n+1}$ is to attach the space $A$ (by its accumulation point) to each isolated point
of $A_n$. Clearly, the underlying set is the same. It can be shown by induction that the topologies
are the same too.

For $n=1$ this is clear from the definition of $A_2$. If $n>1$ then each stem
of the space $A_{n+1}$ is homeomorphic to $A_n$. By the induction hypothesis it can be obtained
by gluing the space $A$ to each isolated point of $A_{n-1}$. The isolated points in $A_n=\Sum A_{n-1}$
are precisely the isolated points of the bristles. Hence by attaching the space $A$ to isolated points
we get $A_n$ from each bristle and the resulting space will be $A_{n+1}=\sum A_n$.

\begin{definition}
Let $A$ be a prime space with the accumulation point $a$. Then $\Aom$ is the space on the set
$\bigcup_{n\in\N} A_n$ where $U\subseteq \bigcup_{n\in\N} A_n$ is open \iaoi $U\cap A_n$ is
open for every $n\in\N$.
\end{definition}

We see that $\Aom$ is a quotient space of $\coprod_{n\in\N} A_n$, so $\Aom\in\CHA$. (Let us
note that $\Aom$ is the inductive limit of spaces $A_n$ and each $A_n$ is embedded into
$\Aom$.)

Observe that the underlying set of the space $\Aom$ is
$\{a\}\cup\bigcup\limits_{n\in\N}(A\setminus\{a\})^n$. The space
$A_n$ is homeomorphic to the \ssp on the subset
$\{a\}\cup\bigcup\limits_{k=1}^n(A\setminus\{a\})^k$ and the
bristles of $A_n$ are the \ssps on sets $\{b\}\cup
\{(b,x_2,x_3,\ldots,x_n); x_i\in A\setminus\{a\}\}$.

We introduce some terminology analogous to \cite{ARHFRA}. We say that a point $x\in\Aom$ is a
point of \emph{$k$-th level} if it belongs to $(A\setminus\{a\})^k$. Point $a$ is the only
point of 0-th level.

It was proved in \cite[Proposition 5.1]{SLEZIAK3} that $\Aom$ homeomorphic to
the \Asum A of several copies of itself: $\Aom\cong\SumA A{\Aom}a$.

From this property and Lemma \ref{LMBRIS} we can see that the
bristles are homeomorphic to $\Aom$.

\begin{lemma}\label{LMAOMLOCHOM}
For $b\in A\setminus\{a\}$ denote by $X$ the \ssp of $\Aom$ on the set
$X:=\{b\}\cup(\{b\}\times\bigcup_{n\in\N}(A\sm\{a\})^n)$.
Then $X$ is homeomorphic to
$\Aom$. Moreover, if $A$ is $T_2$, then $X$ is a clopen subset of
$\Aom$.
\end{lemma}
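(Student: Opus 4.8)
The plan is to derive both assertions from the self-similarity $\Aom\cong\SumA A{\Aom}a$ established in \cite[Proposition 5.1]{SLEZIAK3}, together with Lemma \ref{LMBRIS}. Write $B:=A\setminus\{a\}$, so that the underlying set of $\Aom$ is $\{a\}\cup\bigcup_{n\in\N}B^n$, and let $\Zobr\varphi{A\sqcup(\coprod_{b'\in B}\Aom)}{\Aom}$ be the defining map of this decomposition: each summand indexed by $b'\in B$ is a copy of $X_{b'}=\Aom$ with distinguished point $x_{b'}=a$; on that summand $\varphi$ sends $a\mapsto b'$ and $z\mapsto(b',z)$, and on the base summand $A$ the map $\varphi$ is the inclusion $x\mapsto x$. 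Recall that $\varphi$ is a quotient map.

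For the first assertion, I would note that under the flattening $(b,(z_1,\dots,z_m))\mapsto(b,z_1,\dots,z_m)$ the bristle $\Obr\varphi{X_b}=\{b\}\cup(\{b\}\times(\Aom\setminus\{a\}))$ over $b$ becomes exactly the set $X=\{b\}\cup(\{b\}\times\bigcup_{n\in\N}B^n)$ of the statement. By Lemma \ref{LMBRIS}, this bristle, with the subspace topology it inherits from $\Aom$, is homeomorphic to $X_b=\Aom$, which is precisely the claimed homeomorphism.

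For the \uv{moreover} part I would test openness through the quotient map $\varphi$: a subset of $\Aom$ is open \iaoi its $\varphi$-preimage meets each summand of $A\sqcup(\coprod_{b'\in B}\Aom)$ in an open set. Tracing $\varphi$, the whole $b$-th summand is mapped into $X$; every summand indexed by $b'\ne b$ is mapped into $A\cup(\{b'\}\times(\Aom\setminus\{a\}))$, which is disjoint from $X$; and on the base summand one has $\varphi^{-1}(X)\cap A=\{b\}$. Hence $\varphi^{-1}(X)$ consists of the whole $b$-th summand together with the singleton $\{b\}$ in the base copy of $A$, while $\varphi^{-1}(\Aom\setminus X)$ consists of all the summands indexed by $b'\ne b$ together with $A\setminus\{b\}$ in the base. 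Now $\{b\}$ is open in $A$, because $b$ is an isolated point of the prime space $A$; this already shows that $X$ is open in $\Aom$, with no separation hypothesis. And $A\setminus\{b\}$ is open in $A$ \iaoi $\{b\}$ is closed in $A$, that is, \iaoi $A$ is $T_1$; for a prime space $T_1$ and $T_2$ coincide (see the Preliminaries). Therefore, when $A$ is $T_2$, the set $\Aom\setminus X$ is open as well and $X$ is clopen in $\Aom$.

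The argument carries no analytic content; the only point requiring care is the bookkeeping — pinning down the homeomorphism of \cite[Proposition 5.1]{SLEZIAK3} so that the underlying set of $\SumA A{\Aom}a$ is identified with $\{a\}\cup\bigcup_{n\in\N}B^n$ via the flattening, and then correctly following $\varphi$ on each of the three kinds of summand. Since openness is tested summand by summand in the coproduct domain of $\varphi$, the base copy of $A$ causes no trouble, and everything reduces to the elementary facts that $b$ is isolated in $A$ and that $\{b\}$ is closed in $A$ exactly when $A$ is $T_1$.
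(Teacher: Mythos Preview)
Your proof is correct and follows exactly the approach the paper indicates: the paper does not give a formal proof of this lemma, but the sentence immediately preceding it (\uv{From this property and Lemma \ref{LMBRIS} we can see that the bristles are homeomorphic to $\Aom$}) is precisely your argument via \cite[Proposition 5.1]{SLEZIAK3} and Lemma \ref{LMBRIS}. Your verification of the clopen part through the defining quotient map $\varphi$ is a correct elaboration of what the paper leaves unsaid.
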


Using this lemma repeatedly we can find for any point of $\Aom$ a
(clopen, if $A$ is $T_2$) neighborhood homeomorphic to $\Aom$ (by
induction on the level of points). Namely, if $x=(\Ldots x1k)$ then
$U_x=\{(x_1,\ldots,x_k)\}\cup\{(x_1,\ldots,x_k)\}\times(\bigcup_{n\in\N} (A\sm\{a\})^n)$
is a neighborhood of $x$ homeomorphic to $\Aom$.

We now proceed to defining a clopen local base at $a\in\Aom$.

We will show that $\mc B=\{U\subseteq\Aom; a\in U; U$ is open in $\Aom$ and (\ref{j4:EQ1}) holds$\}$ is a
base for $\Aom$ at the point $a$.
\begin{equation}\label{j4:EQ1}
 (x_1,x_2,\ldots,x_n,x_{n+1},\ldots,x_{n+k})\in U \Ra (x_1,x_2,\ldots,x_n)\in U
\end{equation}

\begin{lemma}\label{LMBASEAOM}
\mc B is a base for $\Aom$ at the point $a$.
\end{lemma}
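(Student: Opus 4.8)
The goal is to show that every open neighborhood $U$ of $a$ in $\Aom$ contains a member of $\mc B$, i.e. an open neighborhood $W$ of $a$ satisfying the \emph{downward closure} condition (\ref{j4:EQ1}). The natural candidate is to take $W$ to be the largest subset of $U$ that is downward closed; concretely, set
$$W=\{x\in U : \text{every initial segment of }x\text{ lies in }U\},$$
where by an initial segment of $x=(x_1,\ldots,x_k)$ I mean any $(x_1,\ldots,x_n)$ with $n\le k$ (and $a$ itself, the length-$0$ segment, is always in $U$ since $a\in U$). Clearly $a\in W\subseteq U$ and $W$ satisfies (\ref{j4:EQ1}) by construction, so the only thing to prove is that $W$ is open in $\Aom$. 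By the definition of the topology on $\Aom$ as the inductive limit of the $A_n$, it suffices to show $W\cap A_n$ is open in $A_n$ for every $n$, which I would do by induction on $n$.

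First I would record the inductive description of the $A_n$ already established in the excerpt: $A_n\cong\Suma{A_{n-1}}a$, so $A_n$ is obtained by gluing a copy of $A$ (its accumulation point at each isolated point of $A_{n-1}$), equivalently by attaching to each point $x=(x_1,\ldots,x_k)$ of level $k<n$ the bristle directions $x_{k+1}\in A\setminus\{a\}$; a local base at such a point in $\Aom$ (hence in $A_n$) is described earlier — for a point of positive level it comes from a neighborhood homeomorphic to $\Aom$, and for $a$ it is what we are trying to describe. The key local fact I need is: a neighborhood basis of a point $x=(x_1,\ldots,x_k)$ in $\Aom$ consists of sets that, near $x$, look like $\{x\}\cup\bigcup\{(x_1,\ldots,x_k,y)\times(\text{nbhd in }\Aom\text{ of the }A\text{-direction})\}$ for $y$ ranging over a neighborhood of $a$ in $A$ — in other words the topology at $x$ is governed by a neighborhood of $a$ in $A$ in the $(k{+}1)$-st coordinate, together with (inductively) the topology of the attached $\Aom$'s. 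This is exactly the structure that makes the downward-closure trimming continuous.

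For the induction: $n=1$ is trivial since $A_1=A$ has $a$ as a point all of whose only initial segment is $a\in U$, so $W\cap A_1=U\cap A_1$, which is open. For the step, suppose $W\cap A_{n-1}$ is open in $A_{n-1}$. A point $z\in W\cap A_n$ has some level $k\le n$. If $k=n$, then $z$ is isolated in $A_n$ (the top-level points of $A_n$ are isolated), so $\{z\}$ is an open neighborhood of $z$ inside $W\cap A_n$. If $k<n$, I use that $U$ is open to pick a basic neighborhood $O$ of $z$ in $A_n$ with $O\subseteq U$; this $O$ is controlled by a neighborhood $V_z$ of $a$ in $A$ in coordinate $k+1$ together with neighborhoods (in the attached $\Aom$-copies, restricted to level $\le n$) in the later coordinates. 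I then claim $O\cap W\cap A_n$ is still a neighborhood of $z$: shrinking $O$ if necessary, I may assume every point $z'\in O$ has its "new" portion (the coordinates beyond the common prefix with $z$) as short as possible, and crucially that the prefix $z\in U$ together with $O\subseteq U$ forces every initial segment of every $z'\in O$ that is an initial segment of $z$ to lie in $U$ (those are just initial segments of $z$, which lie in $U$ because $z\in W$), while the initial segments of $z'$ that are \emph{longer} than $z$ lie in $O\subseteq U$ provided $O$ is chosen to be itself downward closed \emph{above} the prefix — and that is arranged using the induction hypothesis applied one coordinate at a time inside each attached $\Aom$, whose level-$<n$ part sits inside $A_{n-1}$. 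Hence $O'\subseteq W\cap A_n$ for a suitably shrunk basic $O'$, so $W\cap A_n$ is open.

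The main obstacle is the bookkeeping in the inductive step: making precise that a basic neighborhood of a point $z$ of level $k<n$ in $A_n$ can be chosen simultaneously downward closed "above $z$" (using the induction hypothesis in each attached copy of $\Aom$ truncated at level $n$) while automatically being downward closed "through $z$" (because $z$'s own initial segments are in $U$). Once that is set up, openness of $W$ — and hence that $\mc B$ is a local base at $a$ — follows, since any open $U\ni a$ then contains the open downward-closed set $W\in\mc B$.
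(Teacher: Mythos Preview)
Your approach is essentially the paper's: your set $W$ coincides exactly with the set $U=\bigcup_n U_n$ the paper builds, and both of you prove openness by showing $W\cap A_n$ is open in $A_n$ by induction on $n$. The difference is purely in execution: the paper writes the recursion
\[
U_{n+1}=V\cap A_{n+1}\cap\bigl[U_n\cup\bigl(U_n\times(A\setminus\{a\})\bigr)\bigr],
\]
from which openness of $U_{n+1}$ in $A_{n+1}$ is immediate via the defining quotient map $A_n\sqcup\coprod_b A\to A_{n+1}$ (the preimage in $A_n$ is $U_n$, and in each bristle $A_b$ it is either all of $A$ or empty). This bypasses entirely the point-by-point local-base argument you sketch in your inductive step, where the bookkeeping (``downward closed above the prefix'', ``induction hypothesis in each attached copy'') is left vague and the induction structure is not quite right as stated---you invoke the hypothesis on attached copies of $\Aom$ truncated at various levels rather than on $A_{n-1}$ directly. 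Your idea is sound, but the paper's global formula makes the proof a few lines long and removes the obstacle you flag at the end.
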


\begin{proof}
Let $V$ be an open neighborhood of $a$. We want to find $U\in\mc B$ such that $U\subseteq V$.
Let us put
\begin{align*}
U_1 &:= V\cap A_1 \\
U_2 &:= V\cap A_2 \cap [ U_1 \cup (U_1\times (A\setminus\{a\}))] \\
U_{n+1} &:= V\cap A_{n+1} \cap [ U_n \cup (U_n\times (A\setminus\{a\}))]
\end{align*}
and $U:=\bigcup_{n\in\N} U_n$.

Observe that $a\in U\subseteq V$ and, for each $n\in\N$, $U_n\subseteq U_{n+1}$, $U_n$ is open in
$A_n$ and $U\cap A_n=U_n$. Hence $U$ is open in $\Aom$.

If $(x_1,\ldots, x_{n+1})\in U_{n+1}$ then $(x_1,\ldots,x_{n+1})\in U_n\times (A\setminus
\{a\})$ and $(x_1,\ldots,x_n)\in U_n$. By induction we get that (\ref{j4:EQ1}) holds for $U$.
\qed
\end{proof}

\begin{lemma}\label{LMCLOPBASEAOM}
All sets in \mc B are clopen.
\end{lemma}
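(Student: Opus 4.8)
The plan is to show that each $U\in\mc B$ is closed by proving that its complement is open in $\Aom$, \ie, that $(\Aom\setminus U)\cap A_n$ is open in $A_n$ for every $n\in\N$. We already know $U$ is open, so the whole content is closedness. I would argue by induction on $n$, exploiting the recursive structure $A_{n+1}=\Suma{A_n}a$ together with condition (\ref{j4:EQ1}), which says $U$ is "downward closed" along coordinates: if a point of level $n+k$ lies in $U$, so does its truncation to level $n$.

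First I would treat the base case. For $n=1$: $U\cap A_1$ is an open neighborhood of $a$ in $A_1=A$. Here I need to know that $A$ itself has a clopen local base at $a$ — but this is exactly the hypothesis that $A$ is a prime $T_2$-space (the section's standing assumption), since prime $T_2$-spaces are zero-dimensional (stated in the Preliminaries). Actually I should be a touch careful: $U\cap A_1$ need not be one of the canonical basic clopen sets, but any open neighborhood of $a$ in a zero-dimensional space — wait, that is not automatically clopen. Here I would instead use the specific construction of $U$ from the proof of Lemma \ref{LMBASEAOM}: $U_1=V\cap A_1$, and more to the point, the lemma only claims $\mc B$ is a base, so it suffices to show the sets produced by that construction are clopen, OR — cleaner — to show directly that \emph{every} $U$ satisfying (\ref{j4:EQ1}) and openness is closed. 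Let me take the direct route.

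The inductive step is the heart of the matter. Assume $U\cap A_n$ is clopen in $A_n$; I want $U\cap A_{n+1}$ clopen in $A_{n+1}=\Suma{A_n}a$. Recall $A_{n+1}$ is obtained by gluing, at each isolated point $b$ of... more precisely by the \Asum A construction applied to copies of $A_n$, or equivalently (by the "attach $A$ to each isolated point" description) by attaching a copy of $A$ to each isolated point of $A_n$. A set is open (resp. closed) in $A_{n+1}$ \iaoi its trace on $A_n$ and on each bristle is open (resp. closed). The trace on $A_n$ is $U\cap A_n$, clopen by hypothesis. For a bristle at an isolated point: if that isolated point lies outside $U$, condition (\ref{j4:EQ1}) forces the entire bristle to miss $U$ (a point of the bristle truncates to the isolated point), so the trace is empty — clopen. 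If the isolated point lies in $U$, then the bristle is a copy of $A$ with accumulation point glued to a point of $U$; here I use that $A$ has a clopen local base at its accumulation point and that $U$, being open and containing that point, contains a clopen-in-$A$ neighborhood of it within the bristle — I then need that $U$ restricted to the bristle is exactly such a clopen set. This last matching is where I expect the main obstacle: condition (\ref{j4:EQ1}) controls truncations but I must also verify $U$ is "upward saturated enough" on each bristle to be clopen there, which should follow from the explicit definition $U_{n+1}=V\cap A_{n+1}\cap[U_n\cup(U_n\times(A\setminus\{a\}))]$ — so in the end I do fall back on the concrete $U$ from Lemma \ref{LMBASEAOM} rather than an abstract characterization. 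Concretely: on the bristle over an isolated point $p\in U_n$, the set $U_{n+1}$ looks like $\{p\}$ together with $\{p\}\times W$ where $W$ is an open neighborhood of... and by choosing $V$ down and using zero-dimensionality of $A$, this trace is clopen in the bristle $\cong A$.

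Finally I would assemble: since the trace of $U$ on every bristle and on $A_n$ is closed in the respective subspace, $U\cap A_{n+1}$ is closed in $A_{n+1}$; by induction $U\cap A_n$ is closed in $A_n$ for all $n$, hence $\Aom\setminus U$ meets each $A_n$ in an open set, hence is open in $\Aom$. Combined with openness of $U$ (already established in Lemma \ref{LMBASEAOM}), $U$ is clopen. The one genuine subtlety to get right in the write-up is ensuring the construction of $U$ in Lemma \ref{LMBASEAOM} can be taken so that each $U_n\cap(\text{bristle})$ is clopen — which may require shrinking $V$ or, equivalently, intersecting with clopen neighborhoods at each stage of the transfinite/inductive construction, using that $A$ is zero-dimensional at $a$.
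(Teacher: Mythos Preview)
Your inductive approach can be made to work, but you are missing the one observation that makes it trivial, and the paper's proof exploits essentially the same observation in a much more direct way.

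The gap is this: in \emph{any} prime space, every subset containing the accumulation point is automatically closed, because its complement consists entirely of isolated points and is therefore open. You never invoke this. Instead you reach for zero-dimensionality of $A$ (``here I use that $A$ has a clopen local base at its accumulation point'') and then worry that an arbitrary open neighborhood of $a$ need not be clopen. In a prime space it always is. Once you see this, both your base case and the bristle case ``$p\in U$'' in the inductive step are immediate: the trace of $U$ on a bristle $\cong A$ contains the accumulation point, hence is closed in that bristle. No $T_2$ assumption, no clopen bases, no need to fall back on the explicit construction from Lemma \ref{LMBASEAOM}. That fallback would in any case not prove the lemma as stated, since $\mc B$ is by definition the collection of \emph{all} open sets containing $a$ and satisfying (\ref{j4:EQ1}), not just those produced by the specific procedure; your proposed ``shrinking $V$'' would change the statement.

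The paper's proof avoids the induction on $n$ altogether. Just before introducing $\mc B$ it was observed that for each point $x=(x_1,\ldots,x_k)$ the set $U_x=\{x\}\cup\{x\}\times\bigcup_{n\in\N}(A\setminus\{a\})^n$ is a neighborhood of $x$ in $\Aom$. Now if $x\notin U$, condition (\ref{j4:EQ1}) (read contrapositively) says no extension of $x$ lies in $U$ either, so $U_x\cap U=\emptyset$; hence $x\in\Int(\Aom\setminus U)$ and $U$ is closed. This is the same ``downward-closed implies complement is upward-closed'' idea that drives your bristle argument, but packaged globally via the neighborhoods $U_x$ rather than level by level.
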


\begin{proof}
Let $U$ be any set from $\mc B$. If $x=(x_1,\ldots,x_k)\notin U$, then no point of the form
$(x_1,\ldots,x_k,y_{k+1},\ldots,y_{k+l})$ belongs to $U$, \ie, $U_x\cap U=\emps$ holds for
the neighborhood $U_x=\{(x_1,\ldots,x_k)\}\cup\{(x_1,\ldots,x_k)\}\times(\bigcup_{n\in\N} (A\sm\{a\})^n)$ of $x$. Hence $x\in\Int(\Aom\setminus
U)$, $\Aom\setminus U$ is open, $U$ is closed.
\qed
\end{proof}

If $A$ is $T_2$ then by Lemma \ref{LMAOMLOCHOM} we obtain from the
clopen base \mc B at $a$ a clopen base at each point of $\Aom$.
Thus we get finally
\begin{proposition}\label{PRAOM0D}
The space $\Aom$ is zero-dimensional and $T_2$ for any prime $T_2$-space $A$.
\end{proposition}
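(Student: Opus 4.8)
The plan is to lift the clopen local base $\mc B$ at the accumulation point $a$, produced in Lemmas \ref{LMBASEAOM} and \ref{LMCLOPBASEAOM}, to a clopen local base at every point of $\Aom$, exploiting the local self-similarity recorded in Lemma \ref{LMAOMLOCHOM} and the remark following it. Recall that for a point $x=(x_1,\ldots,x_k)$ of level $k\ge 1$ the set $U_x=\{(x_1,\ldots,x_k)\}\cup\{(x_1,\ldots,x_k)\}\times\bigcup_{n\in\N}(A\sm\{a\})^n$ is a neighborhood of $x$ homeomorphic to $\Aom$; since $A$ is $T_2$, Lemma \ref{LMAOMLOCHOM} moreover makes $U_x$ clopen in $\Aom$, and the homeomorphism $\Zobr{\varphi_x}{U_x}{\Aom}$ may be chosen so that $\varphi_x(x)=a$.

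To see that $\Aom$ is zero-dimensional, note that $\mc B$ already works at $a$, while at a point $x$ of level $k\ge 1$ the family $\{\inv{\varphi_x}(B); B\in\mc B\}$ works: each of these sets is clopen in the subspace $U_x$, hence --- as $U_x$ is clopen in $\Aom$ --- clopen in $\Aom$; and since $U_x$ is a neighborhood of $x$ in $\Aom$, transporting a given neighborhood of $x$ to $\Aom$ by $\varphi_x$, shrinking it inside $\mc B$, and pulling it back shows these sets form a neighborhood base at $x$. Thus every point of $\Aom$ has a base of clopen neighborhoods.

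For the $T_2$ axiom some care is needed, since a clopen base by itself gives no separation. I would first check that $\{a\}$ is closed: its complement is $\bigcup\{U_y; y\in\Aom,\ y\ne a\}$, each $U_y$ is open, and $a\notin U_y$ because every point of $U_y$ has level $\ge 1$ whereas $a$ is the unique level-$0$ point. Then for any $x$ of level $\ge 1$ we have $\{x\}=\inv{\varphi_x}(\{a\})$, closed in $U_x$ and hence closed in $\Aom$ because $U_x$ is closed; so all singletons are closed. Finally, for distinct $x,y$ the open set $\Aom\sm\{y\}$ is a neighborhood of $x$, so it contains a clopen $C\ni x$, and then $C$, $\Aom\sm C$ separate $x$ and $y$. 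I do not expect a genuine obstacle: the conceptual input (the clopen base at $a$ and the self-similarity of $\Aom$) is already available, and what remains is the level bookkeeping that keeps $a$ out of each $U_y$, the routine passage from ``clopen in a clopen subspace'' to ``clopen in $\Aom$'', and the reminder that a clopen base must still be combined with closedness of points to yield $T_2$.
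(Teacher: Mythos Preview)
Your proof is correct and follows essentially the same route as the paper: transport the clopen base $\mc B$ at $a$ to every other point via the self-similarity recorded in Lemma~\ref{LMAOMLOCHOM} and the remark after it, using that each $U_x$ is clopen when $A$ is $T_2$. The only difference is that the paper leaves the $T_2$ assertion implicit (relying on the stated fact that for zero-dimensional spaces $T_0$ and $T_2$ coincide), whereas you spell out closedness of singletons and the passage from a clopen base plus $T_1$ to $T_2$; this extra care is welcome but does not change the argument.
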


As $\Aom$ is a zero-dimensional $T_2$-space, it is contained in
any epireflective subcategory of \Top with $\At\notin\Kat A$. In the following proposition
we summarize some properties of $\Aom$ which were proved in \cite{SLEZIAK3}.

\begin{proposition}\label{PROPSLEZ}
If $A$ is a prime space, then
$\SCHA=\CHb{(\Aom)_a}=\SCHb{\Aom}$.

If $A$ is infinite then $\Aom\in\CHb A$ and $\card (\Aom)_a = \card A$.
\end{proposition}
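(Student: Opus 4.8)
The plan is to obtain $\Aom\in\CHb A$ and the cardinality equality directly from the construction, and to derive the three-way equality as a cycle of inclusions $\CHb{(\Aom)_a}\subseteq\SCHb{\Aom}\subseteq\SCHA\subseteq\CHb{(\Aom)_a}$, two of whose links are immediate from the material already developed while the third is the substantive fact from \cite{SLEZIAK3}. The membership $\Aom\in\CHb A$ was in fact already noted above: by induction each $A_n$ lies in $\CHb A$ (one has $A_1=A$, and $A_{n+1}=\Suma{A_n}a$ is a quotient of a topological sum of copies of $A$ and of $A_n$), and $\Aom$ is a quotient of $\coprod_n A_n$. For the cardinalities, recall that the underlying set of $\Aom$ — hence of its prime factor $(\Aom)_a$ — equals $\{a\}\cup\bigcup_{n\in\N}(A\setminus\{a\})^n$; if $\kappa=\card A$ is infinite then $\card(A\setminus\{a\})=\kappa$, each power $(A\setminus\{a\})^n$ has cardinality $\kappa$, and the countable union again has cardinality $\kappa$, so $\card(\Aom)_a=\kappa=\card A$. (The hypothesis that $A$ is infinite is essential here: for a finite prime space this set is countably infinite.)

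For the link $\CHb{(\Aom)_a}\subseteq\SCHb{\Aom}$, first observe that $a$ is non-isolated in $\Aom$ (it is already non-isolated in the subspace $A_1=A$) and that $\{a\}$ is closed in $\Aom$: if $x\neq a$ is a point of level $k\geq1$, the neighborhood $U_x$ of $x$ exhibited after Lemma \ref{LMAOMLOCHOM} consists only of points of level $\geq k$, hence misses $a$, so $x\notin\overline{\{a\}}$. Thus $\{a\}$ is closed but not open in $\Aom$, and by the fact from \cite{CINCHER2} recalled above the prime factor $(\Aom)_a$ is a quotient of the subspace $(\Aom)^a_{(\Aom,a)}$ of $\Aom\tr_a\Aom$. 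Since $\Aom\tr_a\Aom$ is built from $\Aom$ using only topological sums and quotients, it lies in $\CHb{\Aom}$, so its subspace lies in $\SCHb{\Aom}$; and since $\SCHb{\Aom}$ is coreflective it is closed under quotients, so $(\Aom)_a\in\SCHb{\Aom}$ and hence $\CHb{(\Aom)_a}\subseteq\SCHb{\Aom}$. The second link is pure monotonicity: $\Aom\in\CHb A$ gives $\CHb{\Aom}\subseteq\CHb A$, whence $\SCHb{\Aom}=\SSb{(\CHb{\Aom})}\subseteq\SSb{(\CHb A)}=\SCHA$.

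It remains to prove $\SCHA\subseteq\CHb{(\Aom)_a}$, and this is the only point where \cite{SLEZIAK3} is needed in an essential way. First, $A$ is a prime subspace of $(\Aom)_a$ — namely the subspace on the set $\{a\}\cup(A\setminus\{a\})$ of points of level at most $1$, on which $a$ is still non-isolated — so by Lemma \ref{LMSSPPRIM} it is a retract, hence a quotient, of $(\Aom)_a$; thus $A\in\CHb{(\Aom)_a}$ and $\CHb A\subseteq\CHb{(\Aom)_a}$. Consequently it would suffice to know that $\CHb{(\Aom)_a}$ is hereditary, since then $\SCHA=\SSb{(\CHb A)}\subseteq\SSb{(\CHb{(\Aom)_a})}=\CHb{(\Aom)_a}$. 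By the characterization of hereditary coreflective subcategories of \Top from \cite{CINC2001}, this heredity is equivalent to $\CHb{(\Aom)_a}$ being closed under the formation of prime factors, and that is precisely what \cite{SLEZIAK3} establishes, using the self-similarity $\Aom\cong\SumA A{\Aom}a$ of \cite[Proposition 5.1]{SLEZIAK3} together with the description of the local structure of $\Aom$ recalled above, to show that every prime factor of a quotient of a topological sum of copies of $(\Aom)_a$ is again such a quotient.

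I expect this last step to be the real obstacle. Forming a quotient of a topological sum of copies of $(\Aom)_a$ can create accumulation points whose prime factors are prime spaces far more intricate than $(\Aom)_a$ itself, and one has to argue that the recursive way in which $\Aom$, and with it $(\Aom)_a$, is assembled from copies of $A$ already renders the class of all such prime factors closed — informally, that $(\Aom)_a$ \emph{absorbs} the gluings that create new accumulation points. Granting the result of \cite{SLEZIAK3}, the cycle of inclusions closes and yields $\SCHA=\CHb{(\Aom)_a}=\SCHb{\Aom}$.
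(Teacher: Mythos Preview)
The paper does not actually prove this proposition: the sentence immediately preceding it reads ``In the following proposition we summarize some properties of $\Aom$ which were proved in \cite{SLEZIAK3}'', and no proof block is given. So there is nothing in the paper to compare against; your sketch is more than the paper itself offers, and it is correct, including your identification of the heredity of $\CHb{(\Aom)_a}$ as the substantive input imported from \cite{SLEZIAK3}.

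One simplification is worth noting. For the inclusion $\CHb{(\Aom)_a}\subseteq\SCHb{\Aom}$ you go through the $\tr$-construction and the quotient $X^a_{(Y,b)}\to X_a$, which works, but it is unnecessary here: $\SCHb{\Aom}=\SSb{(\CHb{\Aom})}$ is by definition a hereditary coreflective subcategory of \Top, and since it contains the non-indiscrete space $\Aom$ it is, by the characterization from \cite{CINC2001} that you yourself invoke later, closed under prime factors. Hence $(\Aom)_a\in\SCHb{\Aom}$ immediately. Your argument via $\tr$ is not wrong, just longer than needed for this link. The remaining two links --- $\SCHb{\Aom}\subseteq\SCHA$ from $\Aom\in\CHb A$, and $\SCHA\subseteq\CHb{(\Aom)_a}$ from $A$ being a prime subspace of $(\Aom)_a$ together with the heredity of $\CHb{(\Aom)_a}$ established in \cite{SLEZIAK3} --- are exactly right, as are the cardinality computation and the observation that $\Aom\in\CHb A$ already appears in the text above the proposition.
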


Using Lemma \ref{LMADAB} we obtain from Proposition \ref{PROPSLEZ}
the following corollary.
\begin{corollary}\label{CORSLEZ}
Let \Kat A be an epireflective subcategory of \Top with $\At\notin\Kat A$. If $A\in\Kat A$ is
a prime space, then $\HADAb AA= \ADAb A{(\Aom)_a} = \SCHA \cap \Kat A$. Moreover, if $A$ is
infinite, then $\card (\Aom)_a = \card A$.
\end{corollary}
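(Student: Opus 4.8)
The plan is to obtain both equalities by splicing together three facts established above: the inclusion $\HADAb AA\subseteq\SCHA\cap\Kat A$ that Lemma~\ref{LMADAB} yields when applied to the one-object class $\{A\}$; the identity $\SCHA=\CHb{(\Aom)_a}$ of Proposition~\ref{PROPSLEZ}, together with the remark (made when $\Aom$ was introduced) that $\Aom\in\CHb A$; and the fact, from Corollary~\ref{CORP2}, that an HAD-class containing a prime space is closed under prime factors. The only ingredient not yet on the page is the observation that $\Aom$ and its prime factor $(\Aom)_a$ belong to $\Kat A$, and this is where the one genuine case distinction will arise.

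First I would settle the second equality. Granting $(\Aom)_a\in\Kat A$ (which is needed for the AD-hull to be defined at all), Lemma~\ref{LMADAB} gives $\ADAb A{(\Aom)_a}=\CHb{(\Aom)_a}\cap\Kat A$, and substituting $\CHb{(\Aom)_a}=\SCHA$ from Proposition~\ref{PROPSLEZ} yields $\ADAb A{(\Aom)_a}=\SCHA\cap\Kat A$. For the first equality, the inclusion $\HADAb AA\subseteq\SCHA\cap\Kat A$ is the upper bound of Lemma~\ref{LMADAB}; since $\SCHA\cap\Kat A=\ADAb A{(\Aom)_a}$ by the second equality just obtained, and the right-hand side is by definition the smallest AD-class in $\Kat A$ containing $(\Aom)_a$, the reverse inclusion will follow once I show $(\Aom)_a\in\HADAb AA$. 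To this end I would first place $\Aom$ itself in $\HADAb AA$: since $\Aom$ is a quotient of $\coprod_{n\in\N}A_n$ and each $A_n$ is built from $A$ by iterated topological sums and quotients, $\Aom\in\CHb A$, so --- provided $\Aom\in\Kat A$ --- we get $\Aom\in\CHb A\cap\Kat A=\ADAb AA\subseteq\HADAb AA$. As $(\Aom)_a$ is a prime factor of $\Aom$ and $\HADAb AA$ is closed under prime factors by Corollary~\ref{CORP2}, it follows that $(\Aom)_a\in\HADAb AA$. Chaining the inclusions then gives $\HADAb AA=\ADAb A{(\Aom)_a}=\SCHA\cap\Kat A$, and the closing statement $\card (\Aom)_a=\card A$ for infinite $A$ is nothing but the corresponding clause of Proposition~\ref{PROPSLEZ}.

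The step that needs genuine care --- and the main obstacle --- is verifying $\Aom\in\Kat A$ and $(\Aom)_a\in\Kat A$, which I would handle by distinguishing whether the prime space $A$ is Hausdorff. If $A$ is $T_2$, then $\Aom$ is zero-dimensional and $T_2$ by Proposition~\ref{PRAOM0D}, while its prime factor $(\Aom)_a$ --- being a prime factor of a $T_2$-space, hence discrete or a prime $T_2$-space --- is likewise zero-dimensional and $T_2$; thus both lie in $\ZDt$, which is contained in every epireflective subcategory of \Top not containing $\At$. If $A$ is not $T_2$, then, exactly as in the proof of Corollary~\ref{CORP2}, the subspace of $A$ on $\{a,b\}$ --- where $b$ is a point that cannot be separated from the accumulation point $a$ --- is homeomorphic to the \Sierp space, so that space lies in $\Kat A$ and therefore $\Kat A=\Top_0$; in this case it is enough to note that $\Aom$ and $(\Aom)_a$ are $T_0$ (discrete and prime spaces are $T_0$, and $\Aom$ itself is $T_0$, as one reads off from the neighborhood bases described after Lemma~\ref{LMAOMLOCHOM}), so both belong to $\Top_0=\Kat A$. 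With $\Aom,(\Aom)_a\in\Kat A$ secured, the bookkeeping of the previous paragraph goes through unchanged.
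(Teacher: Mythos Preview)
Your proposal is correct and follows essentially the same route as the paper's proof: both establish $\Aom\in\Kat A$ by the $T_2$/non-$T_2$ case split (invoking Proposition~\ref{PRAOM0D} or the fact that $\Kat A=\Top_0$), use Lemma~\ref{LMADAB} together with Proposition~\ref{PROPSLEZ} for the inclusion $\HADAb AA\subseteq\SCHA\cap\Kat A=\ADAb A{(\Aom)_a}$, and obtain the reverse inclusion by placing $\Aom$ in $\HADAb AA$ and applying Corollary~\ref{CORP2} to pass to the prime factor $(\Aom)_a$. Your write-up is slightly more explicit about why $(\Aom)_a\in\Kat A$, which the paper leaves implicit (it follows at once since epireflective subcategories with $\At\notin\Kat A$ are closed under prime factors), but the argument is otherwise the same.
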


\begin{proof}
We first observe that \Kat A contains $\Aom$. If $A$ is $T_2$ then
this is true by Proposition \ref{PRAOM0D}. If $A$ is not $T_2$,
then $\Kat A=\Top_0$ and $\Aom$ is clearly a $T_0$-space.

By Lemma \ref{LMADAB} and Proposition \ref{PROPSLEZ} $\HADAb AA
\subseteq \SCHA \cap \Kat A = \CHb{(\Aom)_a} \cap \Kat A = \ADAb
A{(\Aom)_a}$ holds.

On the other hand, $\Aom\in\Kat A$ implies $\Aom\in\HADAb AA$. The
HAD-class $\HADAb AA$ contains the prime space $A$. So by
Corollary \ref{CORP2} it is closed under prime factors
and $(\Aom)_a\in\HADAb AA$, which proves the opposite inclusion.
\qed
\end{proof}

In the rest of this section we show that coreflective hull \CH B in \Top
of an HAD-class \Kat B in \Kat A is hereditary whenever \Kat B contains at least one
prime space.

\begin{lemma}\label{LMHADOFSET}
Let $\Kat A$ be an epireflective subcategory of \Top  such that
$\At\notin \Kat A$. If $\Kat B=\HADA AD$, where $\Kat
D\subseteq\Kat A$ is a set of spaces and $\Kat B$ contains at
least one prime space, then there exists a prime space $B\in\Kat A$
such that $\Kat B=\HADAb AB=\ADAb AB$. Moreover, $\CHb B=\SCHb B$
is hereditary.
\end{lemma}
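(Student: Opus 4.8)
The plan is to reduce a set of generators to a single prime generator. First I would use the hypothesis that $\Kat B=\HADA AD$ contains some prime space $P$. By Corollary \ref{CORP2}, $\Kat B$ is closed under the formation of prime factors; hence each prime factor $D_a$ of each $D\in\Kat D$ lies in $\Kat B$. Since every space is a quotient of the sum of its prime factors and $\Kat B$ is additive and divisible in $\Kat A$, the class $\Kat B$ is in fact generated (as a HAD-class in $\Kat A$, indeed even as an AD-class) by the set $\Kat P$ of all prime factors of spaces in $\Kat D$ together with $P$; discarding discrete summands, we may assume $\Kat P$ is a set of prime $T_2$-spaces unless $\Kat A=\Top_0$ (in which case a non-$T_2$ prime factor forces $S\in\Kat A$, $\Kat A=\Top_0$, and one argues with the \Sierp space as in Corollary \ref{CORP2}; I would handle that degenerate case separately and briefly). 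So it suffices to treat the case $\Kat B=\ADA A{\Kat P}=\HADA A{\Kat P}$ with $\Kat P$ a set of prime spaces.

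Next I would glue the elements of $\Kat P$ into one prime space $B$. Concretely, if $\Kat P=\{P_i : i\in I\}$ with accumulation points $a_i$, form the wedge of all the $P_i$ at their accumulation points — equivalently the quotient of $\coprod_{i\in I} P_i$ identifying all the $a_i$ to a single point $a$; call the result $B$. Then $B$ is again a prime space (its only non-isolated point is $a$, since each punctured summand is discrete and open in $B$), and $B\in\Kat A$: if each $P_i$ is $T_2$ then each $P_i$ is zero-dimensional, a clopen base at $a_i$ in $P_i$ yields a clopen base at $a$ in $B$, so $B\in\ZDt\subseteq\Kat A$; and if we are in the $\Top_0$ case the claim is trivial since $B$ is obviously $T_0$. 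Each $P_i$ embeds in $B$ as a retract (Lemma \ref{LMSSPPRIM}), so $P_i\in\HADA A{\{B\}}$; conversely $B$ is a quotient of $\coprod P_i$, so $B\in\ADA A{\Kat P}$. Hence $\ADAb AB=\ADA A{\Kat P}=\Kat B$, and since $\Kat B$ is hereditary and contains $B$, also $\HADAb AB=\Kat B$; and by Corollary \ref{CORSLEZ} (applicable because $B$ is a prime space in $\Kat A$) we get $\HADAb AB=\ADAb AB$.

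Finally, for the "moreover" clause: apply Proposition \ref{PROPSLEZ} to the prime space $B$, which gives $\SCHb B=\SSb{(\CHb{(\Aom[B])_a})}$ and, more to the point, $\SCHA[B]=\CHb{(\Aom[B])_a}=\SCHb{\Aom[B]}$, so that $\CHb{(\Aom[B])_a}$ is already hereditary; thus it suffices to show $\CHb B=\CHb{(\Aom[B])_a}$, i.e. that $(\Aom[B])_a$ and $B$ generate the same coreflective hull in $\Top$. One inclusion is automatic since $B\prec (\Aom[B])_a$ is a prime factor... more carefully: $B\in\CHb B$, and $(\Aom[B])_a\in\CHb B$ because $\Aom[B]\in\CHb B$ (it is a quotient of $\coprod_n B_n$, each $B_n$ built from $B$ by sums and quotients) and $\CHb B$ is closed under prime factors by the $\Top$-version of \cite{CINC2001} once we know it is hereditary — but that is circular, so instead I would argue directly: $(\Aom[B])_a\in\SCHb B=\SSb{\CHb B}$, and since $(\Aom[B])_a$ is a prime space all of whose subspaces lie in $\CHb{(\Aom[B])_a}$, and since $\CHb B\subseteq\CHb{(\Aom[B])_a}$ trivially fails... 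The cleanest route: $\CHb B\supseteq$ nothing is needed — we just need $\CHb B=\CHb{(\Aom[B])_a}$, and $(\Aom[B])_a$ is a prime factor of $\Aom[B]$, with $\Aom[B]\in\CHb B$, so $(\Aom[B])_a$ being a subspace-retract of the prime factor $(\Aom[B])_a$ of $\Aom[B]$ lies in $\CHb{\Aom[B]}=\CHb B$; conversely $B\prec(\Aom[B])_a$ shows $B$ is a subspace of $(\Aom[B])_a$, hence $B\in\SCHA[B]=\CHb{(\Aom[B])_a}$, giving $\CHb B\subseteq\CHb{(\Aom[B])_a}$. Therefore $\CHb B=\CHb{(\Aom[B])_a}=\SCHb B$ is hereditary. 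The main obstacle is precisely this last step: verifying that forming $B$ does not enlarge the coreflective hull in $\Top$ beyond that of $\Aom[B]$ — i.e. reconciling the AD-hull description (quotients of sums inside $\Kat A$) with the $\Top$-coreflective hull — and making sure the wedge $B$ genuinely lies in $\Kat A$ in the $T_2$ case.
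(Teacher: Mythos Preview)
Your reduction to a single prime space via the wedge is essentially the paper's idea, but you have chosen the wrong prime space $B$, and this causes both main claims to fail.

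Take $\Kat A=\Top_0$ and $\Kat D=\{\Com\}$. Then $\Kat B=\HADAb A{\Com}$ is the class of subsequential $T_0$-spaces. The only non-discrete prime factor of $\Com$ is $\Com$ itself, so your wedge is $B=\Com$. But $\ADAb A{\Com}=\CHb{\Com}\cap\Top_0=\Seq\cap\Top_0$, which is \emph{strictly smaller} than $\Kat B$; and $\CHb{\Com}=\Seq$ is famously \emph{not} hereditary, so the ``moreover'' clause fails too. Thus neither $\Kat B=\ADAb AB$ nor $\CHb B=\SCHb B$ holds for the wedge.

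The error enters when you invoke Corollary~\ref{CORSLEZ}: that corollary says $\HADAb AB=\ADAb A{(B_\omega)_a}$, \emph{not} $\HADAb AB=\ADAb AB$. The passage from the wedge to a generator whose AD-hull is already hereditary requires the $\Aom$-construction; you cannot skip it. Your own attempt at the ``moreover'' part makes this visible: you need $(B_\omega)_a\in\CHb B$, but a prime factor of $B_\omega$ lies in $\CHb B$ only if $\CHb B$ is closed under prime factors, which is equivalent to heredity of $\CHb B$ --- precisely the conclusion you are after. The argument is genuinely circular, as you suspected.

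The paper's fix is simply to take one more step: after forming the wedge (call it $A$), set $B:=(\Aom)_a$. Then Corollary~\ref{CORSLEZ} gives $\Kat B=\HADAb AA=\ADAb A{(\Aom)_a}=\ADAb AB$ directly, and Proposition~\ref{PROPSLEZ} gives $\CHb B=\CHb{(\Aom)_a}=\SCHA=\SCHb B$, so $\CHb B$ is hereditary with no circularity. Everything else in your outline (closure under prime factors via Corollary~\ref{CORP2}, replacing $\Kat D$ by its prime factors, the $T_2$/$\Top_0$ dichotomy for membership of the wedge in $\Kat A$) is fine and matches the paper.
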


\begin{proof}
Let us denote by $\Kat D'$ the set of all non-discrete prime factors of spaces from \Kat D.
By joining the accumulation points of all prime spaces in $\Kat D'$ into one point we get a
prime space $A$. We consider 2 cases. If $\Kat A=\Top_0$ then clearly $A\in\Kat A$. If $\Kat
A\subseteq\Top_1$ then all spaces in $\Kat D'$ are $T_2$ and $A$ is $T_2$ as well. Therefore
in both cases $A\in\Kat A$ and $\HADAb AA=\HADA A{D'}$.

Any space from \Kat D can be obtained as a quotient of the sum of its prime factors and
consequently $\ADA A{D'}=\CH{D'}\cap\Kat A$ contains the whole
\Kat D and $\Kat B=\HADA AD=\HADA A{D'}=\HADAb AA$.

Using Corollary \ref{CORSLEZ} we obtain that the claim of the lemma holds for $B=(\Aom)_a$.
\qed
\end{proof}

With the help of Lemma \ref{LMHADOFSET} we can prove, using very similar
methods as in \cite[Proposition 4]{CINCHER2}, the following
theorem.
\begin{theorem}\label{THMCINC}
Let $\Kat A$ be an epireflective subcategory of \Top  such that $\At\notin \Kat A$. If \Kat B
is an HAD-class in \Kat A and $\Kat B$ contains at least one prime space, then the
coreflective hull $\CH B$ of \Kat B in \Top is hereditary.
\end{theorem}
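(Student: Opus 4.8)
The plan is to reduce the general case to the case of an HAD-hull of a \emph{set} of spaces, where Lemma \ref{LMHADOFSET} applies directly. First I would recall that $\CH B$ is, by definition, the coreflective hull in \Top, so a space $Z$ lies in $\CH B$ iff it is a quotient of a topological sum of spaces from \Kat B. To prove $\CH B$ is hereditary it suffices, by the characterization of hereditary coreflective subcategories via prime factors (\cite{CINC2001}; see the discussion after Lemma \ref{LMADAB}), to show that $\CH B$ is closed under the formation of prime factors, or alternatively to exhibit $\SSp{(\CH B)}=\CH B$ directly. The cleanest route is the following: take any space $Z\in\CH B$; then $Z$ is a quotient of $\coprod_{i\in I} B_i$ with each $B_i\in\Kat B$. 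Form the set $\Kat D=\{B_i:i\in I\}$ (up to homeomorphism this is a set of spaces, and we may enlarge it by one fixed prime space of \Kat B, which exists by hypothesis), so that $\Kat D\subseteq\Kat A$ is a set and the HAD-hull $\HADA AD$ still contains a prime space.

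Next I would apply Lemma \ref{LMHADOFSET} to $\HADA AD$: it yields a single prime space $B\in\Kat A$ with $\HADAb AB=\ADAb AB=\HADA AD$, and moreover $\CHb B=\SCHb B$ is hereditary. Since every $B_i$ lies in $\HADA AD=\ADAb AB\subseteq\CHb B$, the space $Z$ — being a quotient of a sum of the $B_i$'s — lies in $\CHb B$ as well. Now $\SSp{(\CHb B)}=\CHb B$ is hereditary, so every subspace of $Z$ again lies in $\CHb B$. It remains to check that such a subspace lies in $\CH B$; this is the step to be careful about, and it is handled exactly as in \cite[Proposition 4]{CINCHER2}: one shows that $\CHb B\cap$ (the class of subspaces that can arise) is actually contained in $\CH B$, using that $B\in\CH B$ (since $B\in\Kat B$) together with heredity of $\SCHb B=\CHb B$ — more precisely, every space in $\CHb B$ is a subspace of a space in $\CH B$, namely a quotient of sums of $B$, and subspaces of members of $\CH B$ which happen to lie in $\CHb B$ are dealt with by the prime-factor argument: a subspace $W$ of $Z\in\CH B$ has all its prime factors $W_a$ which are subspaces of the prime factors $Z_a$, and the prime factors of $B$ generate $\SCHA[B]=\CHb{(B_\omega)_a}$, which is contained in $\CH B$ because $(B_\omega)_a\in\HAD B$ as \Kat B contains the prime space and is therefore closed under prime factors by Corollary \ref{CORP2}.

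So the argument organizes as follows. Step 1: fix $Z\in\CH B$, write it as a quotient of $\coprod_{i\in I}B_i$, $B_i\in\Kat B$, and let $\Kat D$ be this set of spaces together with one prime space from \Kat B. Step 2: invoke Lemma \ref{LMHADOFSET} to get a prime space $B\in\Kat A$ with $\HADA AD=\ADAb AB=\HADAb AB$ and with $\CHb B=\SCHb B$ hereditary. Step 3: observe $Z\in\ADAb AB\subseteq\CHb B$, hence every subspace $W$ of $Z$ lies in $\CHb B=\SCHb B$. Step 4: show $W\in\CH B$ by analyzing prime factors: $W$ is the quotient of the sum of its prime factors $W_a$, each $W_a$ is a subspace of a prime factor $Z_a$ of $Z$, and since \Kat B contains a prime space it is closed under prime factors (Corollary \ref{CORP2}), so the prime factors involved lie in \Kat B (after passing to the HAD-hull description $\HADAb AB=\ADAb AB$, whose members are quotients of sums of $B$, whose prime factors generate $\SCHA[B]\subseteq\CH B$ by Proposition \ref{PROPSLEZ}); therefore $W\in\CH B$.

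The main obstacle I expect is Step 4: controlling which prime factors occur in a subspace of an arbitrary member $Z$ of $\CH B$ and verifying that they all stay inside $\CH B$. This is precisely where the reduction to a \emph{single} generating prime space $B$ via Lemma \ref{LMHADOFSET} pays off, because then $\CH B=\CHb B=\SCHA[B]$ by Proposition \ref{PROPSLEZ} and Proposition \ref{PRAOM0D}, and heredity of this hull together with closedness of \Kat B under prime factors (Corollary \ref{CORP2}) closes the loop; the remaining bookkeeping mirrors \cite[Proposition 4]{CINCHER2} and is routine.
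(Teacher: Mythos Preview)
Your overall strategy is exactly the paper's: reduce to a \emph{set}-generated HAD-class and invoke Lemma~\ref{LMHADOFSET} to get a single prime space $B$ with $\CHb B=\SCHb B$ hereditary. The paper organises this by stratifying $\Kat B$ into $\Kat B_\alpha=\HADAb A{\{X\in\Kat B:\card X\le\alpha\}}$, while you fix $Z\in\CH B$ and take $\Kat D$ to be the (set of) summands appearing in a presentation of $Z$ together with one prime space of $\Kat B$; these two reductions to the set case are interchangeable.

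Two points of execution, however:

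\smallskip
\noindent\textbf{Step 3.} The assertion $Z\in\ADAb AB$ is not justified: $Z$ is only in $\CH B$ and need not lie in $\Kat A$ at all, so it need not lie in $\ADAb AB=\CHb B\cap\Kat A$. What you actually need (and what does hold) is $Z\in\CHb B$: each $B_i\in\Kat D\subseteq\HADA AD=\ADAb AB\subseteq\CHb B$, and $\CHb B$ is coreflective in $\Top$.

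\smallskip
\noindent\textbf{Step 4.} This is where your write-up goes astray. You try to push prime factors of $Z$ into $\Kat B$ via Corollary~\ref{CORP2}, but that corollary applies only to spaces \emph{in} $\Kat B$, whereas $Z$ lies merely in $\CH B$; the line ``so the prime factors involved lie in $\Kat B$'' is unsupported as written. The good news is that Step~4 is in fact trivial, and you already recorded the key observation: $B\in\HADA AD\subseteq\Kat B$ (since $\Kat D\subseteq\Kat B$ and $\Kat B$ is an HAD-class), hence $\CHb B\subseteq\CH B$. Thus from $W\in\CHb B$ (Step~3 plus heredity of $\CHb B$) you get $W\in\CH B$ immediately --- no prime-factor bookkeeping is required. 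This is precisely how the paper closes the argument: any subspace of $Y\in\CHb{B_\alpha}$ lies in $\SCHb{B_\alpha}=\CHb{B_\alpha}\subseteq\CH B$ because $B_\alpha\in\Kat B$.
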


\begin{proof}
We first represent \Kat B as a union of an ascending chain of HAD-classes $\Kat B_\alpha$ in
\Kat A, such that each of them is generated by a single space (as an AD-class).

Let us denote by $\Kat B_\alpha$ the \HADhull of all spaces from
\Kat B with cardinality at most $\alpha$. Clearly, $\Kat
B=\bigcup_{\alpha\in\Cn} \Kat B_\alpha$ and the system $\Kat
B_\alpha$, $\alpha\in\Cn$, is nondecreasing.

Since \Kat B contains a prime space, there exists the smallest $\alpha_0$ such that $\Kat
B_{\alpha_0}$ contains a prime space. Then $\Kat B= \bigcup_{\alpha\geq\alpha_0} \Kat
B_\alpha$ and for each $\alpha\geq\alpha_0$ the class $\Kat B_\alpha$ is a \HADhull of a set
of spaces and it contains a prime space. So we can use Lemma \ref{LMHADOFSET} and we get that
for any cardinal number $\alpha\geq\alpha_0$ there exists a prime space $B_\alpha\in\Kat A$
such that $\Kat B_\alpha=\HADAb A{B_\alpha}=\ADAb A{B_\alpha}\subseteq \CHb{B_\alpha}$.

It is easy to see that $\CH B$ consists of quotients of spaces from \Kat B. Thus if
$Y\in\CH B$, then $Y$ is quotient of some space $X\in\Kat B$ and there exists
$\alpha\geq\alpha_0$ such that $X\in\Kat B_\alpha$. Consequently we get $Y\in\CHb{B_\alpha}$.

Any \ssp of $Y$ belongs to $\SCHb{B_\alpha}=\CHb{B_\alpha}\subseteq \CH B$. Thus $\CH B$ is
closed under the formation of \ssps.
\qed
\end{proof}

Using the above theorem we can prove the result corresponding to \cite[Corollary
1]{CINCHER2}.

\begin{corollary}\label{CORSUSP}
Let $\Kat A$ be an epireflective subcategory of \Top such that
$\At\notin \Kat A$. Let $\Kat B\subseteq \Kat A$ and \Kat B
contain at least one prime space. Then $\HADA AB = \SCH B \cap
\Kat A = \SSp(\CH B)\cap\Kat A$.
\end{corollary}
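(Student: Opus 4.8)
The plan is to establish the two equalities $\HADA AB = \SCH B \cap \Kat A$ and $\SCH B \cap \Kat A = \SSp(\CH B)\cap\Kat A$. The second equality is immediate: by definition $\SCH B = \SSp(\CH B)$, so there is nothing to prove there beyond unfolding notation. So the real content is the first equality, and we already have one inclusion essentially for free. Indeed, Lemma \ref{LMADAB} gives $\SSb{(\ADA AB)}\subseteq \HADA AB\subseteq \SCH B \cap \Kat A$, so the inclusion $\HADA AB \subseteq \SCH B \cap \Kat A$ holds for \emph{any} $\Kat B\subseteq\Kat A$, without the hypothesis on prime spaces. Thus the whole job is to prove the reverse inclusion $\SCH B \cap \Kat A \subseteq \HADA AB$, and this is where the assumption that \Kat B contains a prime space will be used (via Theorem \ref{THMCINC}).

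For the reverse inclusion, first note that since \Kat B contains a prime space, so does its \HADhull; hence $\HADA AB$ is an HAD-class in \Kat A containing a prime space, and Theorem \ref{THMCINC} applies to it. The key observation is that $\CH B = \CH{(\HADA AB)}$: indeed $\Kat B\subseteq\HADA AB$ gives $\CH B\subseteq\CH{(\HADA AB)}$, and conversely $\HADA AB\subseteq\CH{B}\cap\Kat A$ is contained in $\CH B$ by Lemma \ref{LMADAB} (applied to $\HADA AB$, or more directly since every member of $\HADA AB$ is a quotient of a sum of members of \Kat B, hence lies in $\CH B$). Therefore Theorem \ref{THMCINC}, applied to the HAD-class $\Kat B' := \HADA AB$, tells us that $\CH{B'} = \CH B$ is hereditary. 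Now take any $Z\in\SCH B\cap\Kat A$. Then $Z$ is a subspace of some $Y\in\CH B$; since $\CH B$ is hereditary, $Z\in\CH B$, and so $Z\in\CH B\cap\Kat A$. By Lemma \ref{LMADAB}, $\CH B\cap\Kat A = \ADA AB \subseteq \HADA AB$, which yields $Z\in\HADA AB$ as desired.

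The main obstacle here is not in the present argument — which is a short bookkeeping exercise once Theorem \ref{THMCINC} and Lemma \ref{LMADAB} are in hand — but rather lies upstream, in the fact that Theorem \ref{THMCINC} is available: the heredity of $\CH B$ is the substantive input, and it in turn rests on the reduction (Lemma \ref{LMHADOFSET}) to the single-generator case and on the structure of the spaces $\Aom$. Given those results, the only point requiring a moment's care is the identity $\CH B = \CH{(\HADA AB)}$, which must be checked so that Theorem \ref{THMCINC} can legitimately be applied to the (possibly larger, but prime-space-containing) class $\HADA AB$ rather than to \Kat B itself; this is needed because \Kat B alone need not be an HAD-class. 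With that identity verified, heredity of $\CH B$ follows and the chain of inclusions closes.
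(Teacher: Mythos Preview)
Your overall strategy --- apply Theorem \ref{THMCINC} to the HAD-class $\HADA AB$ rather than to $\Kat B$ itself --- is exactly the paper's approach, and most of your argument is fine. There is, however, a genuine gap in one step: your justification of the inclusion $\HADA AB \subseteq \CH B$ (equivalently, of $\CH{(\HADA AB)} \subseteq \CH B$) does not hold up.

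You offer two reasons for this inclusion. The ``more direct'' one, that every member of $\HADA AB$ is a quotient of a sum of members of $\Kat B$, is the description of $\ADA AB$, not of $\HADA AB$. The \HADhull is obtained by iterating closure under subspaces as well as under sums and quotients, so its members need not be quotients of sums from $\Kat B$. The other reason, ``Lemma \ref{LMADAB} applied to $\HADA AB$'', yields only $\HADA AB = \ADA A{(\HADA AB)} = \CHb{\HADA AB}\cap\Kat A$, which says nothing about $\CH B$. And Lemma \ref{LMADAB} applied to $\Kat B$ gives $\HADA AB \subseteq \SCH B \cap \Kat A$, with $\SCH B$ rather than $\CH B$ on the right --- precisely the distinction you are trying to collapse. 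In fact, establishing $\HADA AB\subseteq\CH B$ directly is tantamount to knowing that $\CH B$ is hereditary, which is what you are in the process of proving; so the argument as written is circular at this point.

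The fix is immediate and is how the paper proceeds: simply drop the identity $\CH B = \CH{(\HADA AB)}$ and work with $\CH{(\HADA AB)}$ throughout. Theorem \ref{THMCINC} gives that $\CH{(\HADA AB)}$ is hereditary, i.e.\ $\CH{(\HADA AB)}=\SCHb{\HADA AB}$. For $Z\in\SCH B\cap\Kat A$, use $\Kat B\subseteq\HADA AB$ to get $\SCH B\subseteq\SCHb{\HADA AB}=\CH{(\HADA AB)}$, hence $Z\in\CH{(\HADA AB)}\cap\Kat A$. By Lemma \ref{LMADAB} this last class equals $\ADA A{(\HADA AB)}$, which is $\HADA AB$ since $\HADA AB$ is already an AD-class. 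This closes the chain of inclusions without ever needing $\HADA AB\subseteq\CH B$.
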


\begin{proof}
By Lemma \ref{LMADAB} we have $\HADA AB \subseteq \SCH B \cap \Kat A$.

To obtain the opposite inclusion, we use Theorem \ref{THMCINC} for the HAD-class $\HADA AB$.
We get $\CHb{\HADA AB} = \SCHb{\HADA AB}$. This implies $\HADA AB \supseteq \CHb{\HADA AB}
\cap \Kat A = \SCHb{\HADA AB} \cap \Kat A \supseteq \SCH B \cap \Kat A$.
\qed
\end{proof}

In particular, Theorem \ref{THMHAUS} implies that Theorem \ref{THMCINC} and Corollary
\ref{CORSUSP} are valid for any HAD-class in $\Kat A \subseteq \Haus$.

\begin{corollary}
For any epireflective subcategory \Kat A of \Top such that $\Kat A\subseteq\Haus$ the
assignment given by $\Kat C\mapsto\Kat C\cap\Kat A$ yields a bijection between the hereditary
coreflective subcategories of \Top with $\Kat C\supseteq\FG$ and HAD-classes in \Kat A.
\end{corollary}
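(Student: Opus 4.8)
The plan is to show that $\Phi\colon\Kat C\mapsto\Kat C\cap\Kat A$ is a bijection from the hereditary coreflective subcategories $\Kat C$ of \Top with $\Kat C\supseteq\FG$ onto the HAD-classes in \Kat A, its inverse sending an HAD-class $\Kat B$ to $\CH B$ when $\Kat B\neq\Disc$ and to $\FG$ when $\Kat B=\Disc$. That $\Phi$ is well defined needs no new idea: $\Kat A$ is closed under sums (since $\At\notin\Kat A$) and under subspaces, while $\Kat C$ is closed under sums, subspaces and quotients, so $\Kat C\cap\Kat A$ is additive, hereditary and divisible in \Kat A. Note that $\Phi(\FG)=\FG\cap\Kat A=\Disc$, because $\Kat A\subseteq\Haus\subseteq\Top_1$ and every finitely generated $T_1$-space is discrete (in a $T_1$-space $\{x\}$ is the intersection $\bigcap_{y\neq x}(X\sm\{y\})$ of open sets, hence open if the space is finitely generated).

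For surjectivity, let \Kat B be an HAD-class in \Kat A. If $\Kat B=\Disc$, then $\Kat C=\FG$ works by the remark just made. If $\Kat B\neq\Disc$, then \Kat B contains a non-discrete space and hence, being closed under prime factors by Theorem~\ref{THMHAUS}, a non-discrete prime space; that space lies in $\Kat A\subseteq\Haus$, so it is a prime $T_2$-space. By Theorem~\ref{THMCINC} the coreflective hull $\CH B$ in \Top is then hereditary, so $\Kat C:=\CH B$ is a hereditary coreflective subcategory of \Top; it contains the prime $T_2$-space just found, which is not a sum of indiscrete spaces, so $\FG\subseteq\CH B$. Finally, by Lemma~\ref{LMADAB}, $\Kat C\cap\Kat A=\CH B\cap\Kat A=\ADA AB=\Kat B$, the last equality holding because \Kat B is already an AD-class.

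For injectivity, suppose $\Kat C_1\cap\Kat A=\Kat C_2\cap\Kat A=:\Kat B$; by symmetry it suffices to prove $\Kat C_1\subseteq\Kat C_2$. Every space is a quotient of the sum of its prime factors and $\Kat C_2$ is coreflective, so it is enough to show $X_a\in\Kat C_2$ for each prime factor $X_a$ of each $X\in\Kat C_1$. Since $\Kat C_1$ is hereditary and coreflective and $\Kat C_1\neq\CH{Ind}$ (it contains \FG), it is closed under prime factors by \cite{CINC2001}, hence $X_a\in\Kat C_1$. If $X_a$ is discrete or finitely generated then $X_a\in\FG\subseteq\Kat C_2$. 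Otherwise $P:=X_a$ is a non-finitely-generated prime space lying in $\Kat C_1$; write $a$ for its accumulation point, $\mathcal N$ for the family of open neighbourhoods of $a$ in $P$, and set $C:=(\bigcap\mathcal N)\sm\{a\}$, $D:=(P\sm\{a\})\sm C$, and let $Q$ be the subspace of $P$ on $\{a\}\cup D$. The two things to check are: (i) $Q$ is a prime $T_2$-space --- the open neighbourhoods of $a$ in $Q$ intersect in $\{a\}$, and $a$ is not isolated in $Q$, for otherwise $\bigcap\mathcal N$ would be an open neighbourhood of $a$ and $P$ would be finitely generated; and (ii) $P$ is a quotient of $Q\sqcup\coprod_{c\in C}S_c$, where each $S_c$ is a copy of the \Sierp space, via the map that restricts to the inclusion on $Q$ and sends the isolated point of $S_c$ to $c$ and the non-isolated point of $S_c$ to $a$ --- comparing the resulting final topology with that of $P$ (using that in a prime space every superset of an open neighbourhood of the accumulation point is open) shows they coincide. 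Granting (i) and (ii): $Q$ is a subspace of $P\in\Kat C_1$ with $\Kat C_1$ hereditary and coreflective, so $\SCHb Q\subseteq\Kat C_1$; by Proposition~\ref{PRAOM0D} the space $Q_\omega$ is zero-dimensional and $T_2$, hence so is its prime factor $(Q_\omega)_a$, which therefore lies in $\Kat A$; thus $(Q_\omega)_a\in\SCHb Q\cap\Kat A\subseteq\Kat C_1\cap\Kat A=\Kat B\subseteq\Kat C_2$, and since $\Kat C_2$ is coreflective and $\SCHb Q=\CHb{(Q_\omega)_a}$ by Proposition~\ref{PROPSLEZ}, we get $Q\in\SCHb Q\subseteq\Kat C_2$; finally (ii) together with $S\in\FG\subseteq\Kat C_2$ and the coreflectivity of $\Kat C_2$ gives $P\in\Kat C_2$.

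The step I expect to be the real obstacle is (ii): realizing an arbitrary non-Hausdorff prime space as a quotient built from its largest Hausdorff prime subspace together with copies of the \Sierp space, and verifying that the final topology is exactly the original one. Once that structural fact is in hand, the rest is assembling results already proved --- Theorems~\ref{THMHAUS} and~\ref{THMCINC}, Propositions~\ref{PRAOM0D} and~\ref{PROPSLEZ}, Lemma~\ref{LMADAB} --- together with the characterization of hereditary coreflective subcategories of \Top from \cite{CINC2001}. When $P$ is already Hausdorff one has $C=\emptyset$, $Q=P$, and (ii) is trivial, so the case $C\neq\emptyset$ is the substantive one.
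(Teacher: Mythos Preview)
Your proof is correct. The surjectivity half matches the paper's argument essentially verbatim: both define the inverse by $\Kat B\mapsto\CH B$ (with $\Disc\mapsto\FG$), invoke Theorem~\ref{THMHAUS} to get a prime space in $\Kat B$, then Theorem~\ref{THMCINC} to make $\CH B$ hereditary, and finally Lemma~\ref{LMADAB} to get $\CH B\cap\Kat A=\Kat B$.

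The injectivity half is where you diverge. The paper dispatches it in one line by citing \cite[Lemma~1]{CINCHER2}: since $\Kat C\cap\Kat A$ contains every prime $T_2$-space of $\Kat C$ (these are zero-dimensional, hence in $\Kat A$), and that lemma says a hereditary coreflective $\Kat C\supsetneq\FG$ is the coreflective hull of its prime $T_2$-spaces, one gets $\CHb{\Kat C\cap\Kat A}=\Kat C$ immediately. You instead reprove the content of that lemma from within the paper: given a non-finitely-generated prime space $P\in\Kat C_1$, you extract its maximal $T_2$ prime subspace $Q$, push $Q$ through the $A_\omega$ machinery (Propositions~\ref{PRAOM0D} and~\ref{PROPSLEZ}) to land $(Q_\omega)_a$ in $\Kat C_1\cap\Kat A$ and hence in $\Kat C_2$, and then recover $P$ as a quotient of $Q$ together with copies of the \Sierp space. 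Your verification of~(ii) is correct --- the key point being that every open neighbourhood of $a$ in $P$ automatically contains $C$, so the final topology from $Q\sqcup\coprod_{c\in C}S_c$ agrees with the topology of $P$ at $a$. The paper's route is shorter because it outsources exactly this structural fact to \cite{CINCHER2}; your route is self-contained within the present paper and makes explicit why a hereditary coreflective $\Kat C\supseteq\FG$ is determined by its prime $T_2$-spaces.
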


\begin{proof}
If \Kat C is a hereditary coreflective subcategory of \Top then the class $\Kat C\cap\Kat A$
is an intersection of two hereditary classes, thus it is hereditary as well. It is clearly an
AD-class in \Kat A.

Let us denote by $F$ the assignment defined in the claim. We will show that $G$ given by
$G(\Disc)=\FG$ and $G(\Kat B)=\CH B$ for $\Kat B\ne\Disc$ is inverse to $F$.

First, observe that if $\Kat B\ne\Disc$ is an HAD-class in \Kat A then by Proposition
\ref{PRHAUSND} it contains a prime space and from Theorem \ref{THMCINC} we get that $\CH B$
is hereditary.

Let $\Kat C\supsetneq\FG$ be a hereditary coreflective subcategory of \Top. Then $G(F(\Kat
C))=\CH{C\cap A}$. Since $\Kat C\cap\Kat A$ contains all prime $T_2$-spaces from \Kat C, we
get $\CH{C\cap A}=\Kat C$ (see \cite[Lemma 1]{CINCHER2}). The equality $G(F(\FG))=\FG$ is
also clear.

On the other hand, if $\Kat B$ is an HAD-class in \Kat A and $\Kat B\ne\Disc$, then $F(G(\Kat
B))=\CH B\cap\Kat A=\ADA AB=\Kat B$. \qed
\end{proof}

\begin{remark}
Obviously, if $\Kat A$ is an epireflective subcategory of $\Top$ with $\Kat A\subseteq\Haus$
(e.g.~$\Kat A=\CReg, \Reg_2, \ZDt$), then $\Kat C\mapsto \Kat C\cap\Kat A$ yields a bijection
between the hereditary coreflective subcategories (i.e., HAD-classes) in $\Haus$ and HAD-classes in $\Kat A$.
\end{remark}

For any epireflective subcategory of \Top with $\At\notin\Kat A$ the above assignment is a
bijection between hereditary coreflective subcategories of \Top such that $\Kat C\supsetneq\FG$ and
the HAD-classes in \Kat A containing at least one prime space.

\section{Extension of the results to bireflective subcategories}\label{SECTNOTBIR}

Until now we have only dealt with the epireflective subcategories \Kat A of \Top such that
$\At\notin\Kat A$, i.e., with \Kat A not bireflective. In this section we would like to find a
method how to extend our results also to bireflective subcategories of \Top.
For this, we can use the one-to-one correspondence between bireflective and non-bireflective
epireflective subcategories of \Top given by the assignments $\Kat A\mapsto \BH A$ and $\Kat B\mapsto \Kat B\cap
\Top_0=\{R_0B; B\in\Kat B\}$ (see \cite{MARNY}, \cite{NAKAGAWA}). For sake of simplicity we will ignore the
trivial case $\Kat A=\Ind$. (The bireflective subcategory \Ind corresponds in this assignment to the subcategory
containing only one-point spaces and the empty space.)

Recall that the category $\Top_0$ of all $T_0$-spaces is a quotient-reflective subcategory of
\Top. We will denote the $T_0$-reflector by $R_0$.

The $T_0$-reflection of a space $X$ is the quotient space given by the following equivalence
relation: $x\sim y$ \iaoi $\ol{\{x\}}=\ol{\{y\}}$ (see
\eg \cite[Beispiel 8.3(2)]{HER}).
The $T_0$-reflection arrow is the quotient map corresponding to this equivalence relation. It
is moreover an initial map and a retraction, i.e., the $T_0$-reflection $R_0X$ is homeomorphic
to a \ssp of $X$ obtained by choosing one point from each equivalence class.

Using the results of the foregoing section we obtain a simple characterization of hereditary
AD-classes in a bireflective subcategory \Kat A in Theorem \ref{THMHERR0}.

The following lemma says that in the collection of all AD-classes in \Kat A with $\At\in\Kat
A$ all AD-classes except $\Kat{Disc}$ contain $\At$.

\begin{lemma}
Let $\Kat C$ be an AD-class in an epireflective subcategory \Kat A with $\At\in\Kat A$ and $\Kat A\ne\Ind$. If
$\Kat C$ contains a non-discrete space then $\At\in\Kat C$.
\end{lemma}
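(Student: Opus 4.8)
Let $\Kat C$ be an AD-class in $\Kat A$ containing a non-discrete space $X$. Since $\At\in\Kat A$ and $\Kat A\ne\Ind$, the space $\At$ is available as a potential target of quotient maps in $\Kat A$, so it suffices to exhibit a quotient map $\Zobr qZ{\At}$ with $Z\in\Kat C$; divisibility of $\Kat C$ in $\Kat A$ then gives $\At\in\Kat C$. The plan is to build such a $Z$ from the non-discrete space $X$ by topological sums (which $\Kat C$ is closed under) and then hit $\At$ by a quotient map.

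First I would locate, inside $X$, a non-isolated point $a$ and a point $c\ne a$ that lies in \emph{every} open neighborhood of $a$, if such a point exists; this is exactly the situation in which the prime factor $X_a$ is not $T_1$, i.e.\ it is not the case that $X$ is even $T_1$ at $a$. In that (easy) case the two-point subset $\{a,c\}$ carries the indiscrete topology (neither point can be separated from the other since $X_a$ is the prime factor and $c$ is in every neighborhood of $a$), so $\At$ is already a subspace of $X$; but subspaces are not in general available, so instead I would use a quotient: collapse $X\setminus\{a\}$ (or an appropriate saturated set) to a single point to obtain a two-point quotient in which the image of $a$ is non-isolated, hence the indiscrete two-point space $\At$, and this quotient lands in $\Kat A$ because $\At\in\Kat A$.

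The substantive case is when $X$ is non-discrete but the prime factor $X_a$ at each non-isolated point $a$ is a \emph{prime} space (so $X_a$ is $T_0$ but not discrete). Here I would take one such prime factor $P=X_a$ — note $P$ is a quotient of a summand of $X$, hence $P\in\Kat C$ — and form the topological sum $P\sqcup P$, which is in $\Kat C$ by additivity. Label the two accumulation points $a_1,a_2$ and the two copies of a fixed isolated point; now define a quotient map from $P\sqcup P$ to the two-point indiscrete space $\At$ by sending one accumulation point together with all but "half" of the isolated material to one point and the rest to the other point, arranged so that \emph{both} points of the image are non-isolated — concretely, identify $a_1$ with an isolated point of the second copy and $a_2$ with an isolated point of the first copy, then collapse each of the two resulting "stars" to a point. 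Because each original accumulation point had every neighborhood meeting the isolated points that got identified across, neither image point is open, so the quotient is $\At$. Since $\At\in\Kat A$, the quotient lies in $\Kat A$, and divisibility of $\Kat C$ in $\Kat A$ yields $\At\in\Kat C$.

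\textbf{Main obstacle.} The delicate point is the second case: one must produce, purely by sums and a single quotient (no subspaces, since $\Kat C$ need not be hereditary), a map onto $\At$ rather than merely onto the Sierpi\'nski space $S$ — the "obvious" quotient of a prime space collapsing the isolated points to one point gives $S$, not $\At$, because the accumulation point stays separated from the collapsed point. The trick of gluing two copies of the prime factor crosswise (accumulation point of one to an isolated point of the other) so that \emph{both} image points lose their isolation is what I expect to require the most care to set up and verify as a genuine quotient map with codomain in $\Kat A$; one must check that the saturated open sets of the combined identification are exactly $\emptyset$ and the whole space, which follows from the fact that in a prime space every neighborhood of the accumulation point contains cofinally many isolated points, in particular the one used in the cross-identification.
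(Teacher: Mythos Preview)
Your case~2 contains a genuine gap: you assert that the prime factor $P=X_a$ ``is a quotient of a summand of $X$, hence $P\in\Kat C$'', but this is false. The prime factor $X_a$ carries a \emph{finer} topology than $X$ (every point $\ne a$ is made isolated), so $X_a$ is not a quotient of $X$; the relationship runs the other way ($X$ is a quotient of $\coprod_a X_a$). Since $\Kat C$ is only assumed additive and divisible, there is no reason for $X_a$ to lie in $\Kat C$, and your cross-gluing construction never gets off the ground. Your case~1 is also slightly garbled: ``$c$ lies in every neighborhood of $a$'' does not force the subspace $\{a,c\}$ to be indiscrete (it may well be Sierpi\'nski), and the single-copy quotient $X\to\{0,1\}$ sending $a\mapsto 0$ yields $\At$ precisely when $\{a\}$ is not closed, which is a different condition from the one you stated.

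The paper's argument bypasses prime factors entirely and is essentially your ``cross'' idea applied directly to the given non-discrete space $C$. Pick a non-isolated point $c\in C$, form $C\sqcup C\in\Kat C$, and define $\Zobr h{C\sqcup C}{\At}$ by sending the first copy via $c\mapsto 0$, $C\setminus\{c\}\mapsto 1$, and the second copy via $c\mapsto 1$, $C\setminus\{c\}\mapsto 0$. Each component map is continuous (every map into an indiscrete space is), and $h^{-1}(\{0\})=\{c\}_1\sqcup (C\setminus\{c\})_2$ is not open because $\{c\}$ is not open in the first copy; symmetrically for $h^{-1}(\{1\})$. Hence $h$ is quotient, $\At\in\Kat A$, and divisibility gives $\At\in\Kat C$. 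No case split and no prime factors are needed.
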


\begin{proof}
Let $C\in\Kat C$ be a non-discrete space and $c\in C$ be non-isolated. Let us define $\Zobr
{f,g}C{\At}$ by $f(c)=0$, $\Obr f{C\setminus\{c\}}=\{1\}$ and $g(c)=1$, $\Obr
g{C\setminus\{c\}}=\{0\}$. We see at once that the map $\Zobr{h}{C\sqcup C}{\At}$,
obtained as the combination of $f$ and $g$, is a quotient map.
\qed
\end{proof}

\begin{lemma}\label{LMR0AIFFA}
Let \Kat C be an AD-class in an epireflective subcategory $\Kat A$, $\Kat A\ne\Ind$, $\At\in\Kat C$ and
$A\in\Kat A$. Then $A\in\Kat C$ \iaoi $R_0A\in\Kat C$.
\end{lemma}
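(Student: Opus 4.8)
The plan is to treat the two implications separately. The forward implication, $A\in\Kat C\Rightarrow R_0A\in\Kat C$, uses nothing about $\At$: the $T_0$-reflection arrow $\Zobr rA{R_0A}$ is a quotient map and $R_0A$ is homeomorphic to a \ssp of $A$, so $R_0A\in\Kat A$ ($\Kat A$ is epireflective, hence closed under \ssps); from $A\in\Kat C$ the divisibility of $\Kat C$ in $\Kat A$ then gives $R_0A\in\Kat C$.

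For the converse, assume $R_0A\in\Kat C$. I would first observe that $\Kat C$ contains \emph{every} indiscrete space. Since $\At\in\Kat C$ and $\Kat C$ is additive, every topological sum of copies of $\At$ lies in $\Kat C$; for a cardinal $\kappa\ge 1$ the $\kappa$-point indiscrete space $I_\kappa$ is a quotient of such a sum (identify the two points of each summand so that the resulting identification graph on the $\kappa$ points is connected — then the only subsets with open preimage are $\emptyset$ and the whole space, i.e.\ the quotient topology is indiscrete). Moreover $I_\kappa\in\Kat A$, because $\Kat A$ is epireflective, hence closed under products and \ssps, and $I_\kappa$ is a \ssp of the indiscrete space $\At^\kappa\in\Kat A$. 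By divisibility of $\Kat C$ in $\Kat A$, $I_\kappa\in\Kat C$.

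Next I would reconstruct $A$ from $R_0A$ and indiscrete spaces using only topological sums and quotients. Write $\Zobr rA{R_0A}$ for the reflection, $F_p:=\Invobr rp$ for its fibers and $\kappa_p:=\card{F_p}$; points lying in a common fiber are topologically indistinguishable in $A$, so each $F_p$ carries the indiscrete \ssp topology. Fix a section $\Vloz s{R_0A}A$ of $r$. For every $x\in A$ put $p_x:=r(x)$ and define $\Zobr{\phi_x}{R_0A}A$ by $\phi_x(p_x)=x$ and $\phi_x(p)=s(p)$ for $p\ne p_x$; this map is continuous (it differs from the embedding $s$ only in sending $s(p_x)$ to the indistinguishable point $x$), and clearly $r\circ\phi_x=\mathrm{id}_{R_0A}$. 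For every $p\in R_0A$ let $\Zobr{\psi_p}{I_{\kappa_p}}A$ be a homeomorphism onto the indiscrete \ssp $F_p$. Now let $\mathcal{D}$ be the topological sum consisting of one copy of $R_0A$ for each $x\in A$ together with one copy of $I_{\kappa_p}$ for each $p\in R_0A$. Then $\mathcal{D}\in\Kat C$: the summands $R_0A$ lie in $\Kat C$ by hypothesis, the summands $I_{\kappa_p}$ lie in $\Kat C$ by the previous paragraph, and $\Kat C$ is additive. Let $\Zobr q{\mathcal{D}}A$ be the combination of all the $\phi_x$ and all the $\psi_p$. Since $A\in\Kat A$ and $\Kat C$ is divisible in $\Kat A$, the proof is finished once $q$ is shown to be a quotient map.

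This last step is the main obstacle, and it is exactly where the indiscrete spaces are indispensable: a quotient map can only turn distinguishable points into indistinguishable ones if the identifications are funneled through an indiscrete space, and the maps $\psi_p$ provide precisely the gluing of the many copies of $R_0A$ over each fiber that is needed to recover the non-$T_0$ identifications present in $A$. Concretely, $q$ is onto because the $\psi_p$ already cover $A$; and if $U\subseteq A$ has $\Invobr qU$ open, then for each $p$ the set $\Invobr{\psi_p}U$ is open in the indiscrete space $I_{\kappa_p}$, which forces $U\cap F_p\in\{\emptyset,F_p\}$, so $U$ is a union of fibers, say $U=\Invobr rV$. Then, using $r\circ\phi_x=\mathrm{id}_{R_0A}$, one gets $\Invobr{\phi_x}U=V$ for every $x$, and by assumption this set is open in $R_0A$; hence $U=\Invobr rV$ is open in $A$ by continuity of $r$. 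Thus $q$ is a quotient map, and $A\in\Kat C$ follows.
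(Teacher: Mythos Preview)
Your argument is correct and follows essentially the same strategy as the paper: show that $A$ is a quotient of a topological sum built from $R_0A$ and indiscrete spaces (the fibers of the $T_0$-reflection), all of which lie in $\Kat C$. The paper is terser --- it simply asserts that $A$ is a quotient of $R_0A\sqcup\coprod_p I_{\kappa_p}$ obtained by identifying each point of $R_0A$ with one point of the corresponding indiscrete fiber, and concludes $A\in\CHb{R_0A,\At}$ without writing out the quotient verification.

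One minor observation: your family of copies of $R_0A$ indexed by all $x\in A$, with the maps $\phi_x$, is redundant. A single copy of $R_0A$ mapped via the section $s$ already suffices, exactly as in the paper. In your quotient verification every $\phi_x$ produces the same preimage $V$ (since $r\circ\phi_x=\mathrm{id}_{R_0A}$ for all $x$), so the extra copies do no work. The argument goes through unchanged with just one copy, and the resulting sum is the one the paper uses.
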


\begin{proof}
Let $A\in\Kat C$. Since $R_0A$ is a
\ssp of $A$, we have $R_0A\in\Kat A$. The $T_0$-reflection arrow $A\to R_0A$ is a quotient
map, therefore $R_0A\in\Kat C$.

Now let $R_0A\in\Kat C$. Since the $T_0$-reflection arrow $\Zobr {r_A}A{R_0A}$ is an initial
map and bireflective subcategories of \Top are known to be closed under initial sources, we
get $A\in\BHb{R_0A}=\RHb{\{R_0A,\At\}}\subseteq\Kat A$.

Since the equivalence classes of $\sim$ are indiscrete \ssps of $A$, the space $A$ can be obtained
as a quotient of the topological sum of $R_0A$ and indiscrete spaces corresponding to these equivalence
classes. (We identify each point of $R_0A$ with some point from the indiscrete space representing its
equivalence class.)
Hence $A\in\CHb{R_0A,\At}$. Consequently, $A\in\Kat C$.
\qed
\end{proof}

\begin{lemma}
If \Kat C is an AD-class in an epireflective subcategory $\Kat A\ne\Ind$, then $\Kat C\cap\Top_0 =
R_0\Kat C=\{R_0C; C\in\Kat C\}$.
\end{lemma}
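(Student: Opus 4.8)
The plan is to split into two cases according to whether $\Kat C$ is contained in $\Top_0$.

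First I would handle the case $\Kat C\subseteq\Top_0$. This includes, in particular, the situation $\At\notin\Kat A$: since $\Kat A$ is epireflective and hence hereditary, it cannot contain a non-$T_0$ space, because the subspace of such a space on a pair of distinct topologically indistinguishable points is homeomorphic to $\At$; thus $\Kat A\subseteq\Top_0$ and a fortiori $\Kat C\subseteq\Top_0$. Whenever $\Kat C\subseteq\Top_0$ one has $\Kat C\cap\Top_0=\Kat C$ outright; and for each $C\in\Kat C$ the classes of the equivalence relation defining $R_0$ are singletons, so (by the description of $R_0C$ recalled above) $R_0C$ is homeomorphic to $C$. Hence $R_0\Kat C=\Kat C$ as well, and both sides of the claimed equality equal $\Kat C$.

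Next I would treat the case $\Kat C\not\subseteq\Top_0$. Pick $C_0\in\Kat C$ that is not $T_0$. As in the first case, hereditariness of $\Kat A$ forces $\At\in\Kat A$, and $C_0$ is non-discrete (a discrete space is $T_2$, hence $T_0$); so the lemma immediately preceding Lemma \ref{LMR0AIFFA} applies and gives $\At\in\Kat C$. I would then check the two inclusions. For $\Kat C\cap\Top_0\subseteq R_0\Kat C$: if $X\in\Kat C\cap\Top_0$ then $R_0X$ is homeomorphic to $X$, so $X$ lies in $\{R_0C;\, C\in\Kat C\}=R_0\Kat C$. For $R_0\Kat C\subseteq\Kat C\cap\Top_0$: given $C\in\Kat C$ we have $C\in\Kat A$ and $\At\in\Kat C$, so Lemma \ref{LMR0AIFFA} yields $R_0C\in\Kat C$, while $R_0C\in\Top_0$ by construction; hence $R_0C\in\Kat C\cap\Top_0$. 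Together these give $\Kat C\cap\Top_0=R_0\Kat C$.

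I do not expect a genuine obstacle here: the statement is essentially a repackaging of the two preceding lemmas. The only step that takes a moment is the reduction in the second case, where from the single hypothesis that $\Kat C$ contains a non-$T_0$ space one must extract both ``$\At\in\Kat A$'' (via the subspace argument together with the hereditariness of $\Kat A$) and ``$\Kat C$ contains a non-discrete space'' (needed to invoke the lemma preceding Lemma \ref{LMR0AIFFA}); the latter is automatic, since a non-$T_0$ space is in particular not discrete.
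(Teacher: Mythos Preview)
Your proof is correct and follows essentially the same route as the paper. The paper's argument is terser: it disposes of the trivial situation with a single ``w.l.o.g.\ $\Kat C$ contains a non-discrete space'' and then proves the two inclusions exactly as you do, invoking Lemma~\ref{LMR0AIFFA} for $R_0\Kat C\subseteq\Kat C\cap\Top_0$. Your case split on $\Kat C\subseteq\Top_0$ versus $\Kat C\not\subseteq\Top_0$ is slightly more explicit about \emph{why} the hypothesis $\At\in\Kat C$ of Lemma~\ref{LMR0AIFFA} is available in the nontrivial case, but the substance is the same.
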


\begin{proof}
W.l.o.g.~let $\Kat C$ contains a non-discrete space.

If $X\in\Kat C\cap \Top_0$, then $X=R_0X\in R_0\Kat C$. Hence $\Kat C\cap\Top_0 \subseteq
R_0\Kat C$.

On the other hand, let $X=R_0C$ for $C\in\Kat C$. By Lemma \ref{LMR0AIFFA} $X\in\Kat C$,
hence $X\in\Kat C\cap \Top_0$. So the opposite inclusion is true as well. \qed
\end{proof}

\begin{theorem}\label{THMHERR0}
Let \Kat C be an AD-class in an epireflective subcategory $\Kat A\ne\Ind$ with $\At\in\Kat A$. Then
\Kat C is hereditary \iaoi $R_0\Kat C =\Kat C\cap\Top_0$ is hereditary.
\end{theorem}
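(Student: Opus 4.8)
The plan is to prove both implications of the equivalence, using the structural results about the correspondence $\Kat C\leftrightarrow R_0\Kat C$ established in the preceding lemmas. Throughout I may assume w.l.o.g.\ that $\Kat C$ contains a non-discrete space, since otherwise $\Kat C=\Disc$, $R_0\Kat C=\Disc$, and both classes are trivially hereditary; in particular, by the lemma preceding Lemma \ref{LMR0AIFFA}, we then have $\At\in\Kat C$, which is the hypothesis needed to invoke Lemma \ref{LMR0AIFFA}.

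First I would handle the forward direction: assume $\Kat C$ is hereditary. Let $X\in R_0\Kat C=\Kat C\cap\Top_0$ and let $Y$ be a \ssp of $X$. Since $X\in\Kat C$ and $\Kat C$ is hereditary, $Y\in\Kat C$. But $Y$ is a \ssp of a $T_0$-space, hence $Y\in\Top_0$, so $Y\in\Kat C\cap\Top_0=R_0\Kat C$. Thus $R_0\Kat C$ is hereditary. This direction is essentially immediate.

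The substantive direction is the converse: assume $R_0\Kat C=\Kat C\cap\Top_0$ is hereditary; show $\Kat C$ is hereditary. Let $A\in\Kat C$ and let $B$ be a \ssp of $A$; I want $B\in\Kat C$. The natural idea is to pass to $T_0$-reflections: $R_0B$ is a \ssp of $A$ in a suitable sense, or at least related to one. More precisely, since the $T_0$-reflection arrow $r_A\colon A\to R_0A$ is a retraction onto a \ssp of $A$ obtained by choosing one point per equivalence class, and the equivalence relation $\sim$ restricts on $B$ to the analogous relation, one checks that $R_0B$ is homeomorphic to a \ssp of $R_0A$. (Concretely: realize $R_0A$ as a \ssp $A'\subseteq A$ hitting each $\sim$-class of $A$ once; then $A'\cap B$ need not hit every $\sim$-class of $B$, but $B\cap(\text{some transversal})$ embeds into a transversal of $A$, giving $R_0B$ as a subspace of $R_0A$ up to homeomorphism — I would spell this out carefully as the one genuinely fiddly point.) Since $A\in\Kat C$, Lemma \ref{LMR0AIFFA} (using $\At\in\Kat C$) gives $R_0A\in\Kat C$, hence $R_0A\in\Kat C\cap\Top_0=R_0\Kat C$. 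As $R_0\Kat C$ is hereditary and $R_0B$ is a \ssp of $R_0A$, we get $R_0B\in R_0\Kat C\subseteq\Kat C$. Applying Lemma \ref{LMR0AIFFA} again in the other direction, since $R_0B\in\Kat C$ and $B\in\Kat A$ (as $\Kat A$ is hereditary, being epireflective, and $B\hra A\in\Kat A$), we conclude $B\in\Kat C$. Hence $\Kat C$ is hereditary.

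The main obstacle I anticipate is the verification that $R_0B$ is (homeomorphic to) a \ssp of $R_0A$ when $B$ is a \ssp of $A$ — i.e., the naturality of the $T_0$-reflection with respect to subspace inclusions. This is a standard fact but requires a short argument: the inclusion $B\hra A$ induces a continuous map $R_0B\to R_0A$, and one must check it is an embedding, using that two points of $B$ have the same closure in $B$ iff they have the same closure in $A$ (which follows because the closure of a point in a subspace is the trace of its closure in the ambient space). Once this naturality statement is in hand, the rest is a clean two-step application of Lemma \ref{LMR0AIFFA} sandwiching the hereditariness of $R_0\Kat C$.
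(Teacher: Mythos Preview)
Your proposal is correct and follows essentially the same approach as the paper: reduce to the case $\At\in\Kat C$, handle the forward direction by observing $\Kat C\cap\Top_0$ is an intersection of hereditary classes, and for the converse use that $R_0$ sends embeddings to embeddings (because $\sim$ on the subspace is the restriction of $\sim$ on the ambient space) together with two applications of Lemma \ref{LMR0AIFFA}. The only difference is notational (your $A$ and $B$ are swapped relative to the paper), and you are slightly more explicit about checking $B\in\Kat A$ before the second use of Lemma \ref{LMR0AIFFA}.
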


\begin{proof}
We can assume $\At\in\Kat C$, since otherwise $\Kat C\subseteq\Kat{Disc}$ and the claim is trivial.

If \Kat C is hereditary, then $\Kat C\cap\Top_0$ is hereditary as an intersection of two
hereditary classes.

Now assume that $R_0\Kat C$ is hereditary. Let $B\in\Kat C$ and $\Vloz eAB$ be an embedding.
Then the map $\Vloz {R_0e}{R_0A}{R_0B}$ is an embedding as well. (Recall that $R_0B$ is the
\ssp of $B$ obtained by choosing one point from each equivalence class and note that the
equivalence relation $\sim_A$ is the restriction of the relation $\sim_B$.) Therefore
$R_0A\in\Kat C\cap\Top_0$. Then Lemma \ref{LMR0AIFFA} implies $A\in\Kat C$. \qed
\end{proof}

We have shown that to answer the question whether an AD-class in
\Kat A is hereditary it suffices to study  the corresponding
AD-class in $\Kat A\cap\Top_0$. Since for a bireflective
subcategory $\Kat A\ne \Top$ we have $\At\notin \Kat A
\cap\Top_0$, this is precisely the situation examined in the
preceding parts of this paper.

\begin{lemma}\label{LMINIMAPPR}
If $\Zobr fXY$ is a surjective initial map, $b\in Y$ and $\inv f(b)=\{a\}$, then
$X_a\in\CHb{Y_b}$.
\end{lemma}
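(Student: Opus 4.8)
<br>

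The plan is to analyze the prime factor $X_a$ directly via its neighborhood filter at $a$ and exhibit it as a quotient of a topological sum built out of copies of $Y_b$. Recall that $X_a$ is the space on the underlying set of $X$ in which every point other than $a$ is isolated and the neighborhoods of $a$ are exactly the neighborhoods of $a$ in $X$. Since $f$ is initial, the neighborhood filter of $a$ in $X$ is generated by the sets $\Invobr f W$ where $W$ runs over open neighborhoods of $b$ in $Y$ (together, possibly, with the sets coming from other preimages, but as $\inv f(b)=\{a\}$ these are the only sets containing $a$ that matter). In particular $X_a$ is, up to the obvious bijection, a \ssp-like modification of $Y_b$: the point $a$ \uv{sees} exactly the trace of the neighborhood filter of $b$, but over each isolated point $y\neq b$ of $Y_b$ the fiber $\inv f(y)$ gets blown up into a discrete set.

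First I would make this precise: consider the space $Y_b$ and, for each isolated point $y\in Y_b$ (i.e.\ each $y\in Y$ with $y\neq b$), the discrete space $D_y$ on the set $\inv f(y)$; for $y=b$ we have the singleton $\inv f(b)=\{a\}$. Form the topological sum $Y_b \sqcup \coprod_{y\neq b} D_y$ and define a map to $X_a$ sending the copy of $Y_b$ by a section-like choice (picking, say, one point of each fiber, with $b\mapsto a$) and sending each $D_y$ identically onto the fiber $\inv f(y)\subseteq X_a$. The combination of these maps is a surjection onto the underlying set of $X_a$; I would check it is a quotient map. Openness downstairs: a set $U\ni a$ is open in $X_a$ iff it is a neighborhood of $a$ in $X$ iff (by initiality and $\inv f(b)=\{a\}$) $U\supseteq \Invobr fW$ for some open $W\ni b$ in $Y$; pulling back to the sum, the $Y_b$-component meets $U$ in (a set containing) the trace of $W$, which is open in $Y_b$, and the $D_y$-components are discrete so any trace is open there. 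Conversely any set whose preimages are open is easily seen to contain a neighborhood of $a$. Points $\neq a$ are isolated on both sides so cause no trouble.

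Since $Y_b$ embeds as a subspace of $Y$ and $Y\in\RHb{Y_b}$ would be too strong, what I actually need is just $Y_b$ itself as the building block, plus discrete spaces. Discrete spaces lie in every coreflective subcategory of \Top (they form the smallest one), so $Y_b\sqcup\coprod_{y\neq b}D_y\in\CHb{Y_b}$, and hence its quotient $X_a$ lies in $\CHb{Y_b}$ as well. That gives $X_a\in\CHb{Y_b}$, which is the assertion. (Alternatively, and perhaps more cleanly, one observes that $X_a$ is the \Asum-type construction $\SumA{}{}{}$-flavored gluing of discrete fibers onto $Y_b$; but the bare-hands quotient argument above suffices and avoids invoking the full \Asum machinery.)

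The main obstacle I anticipate is the verification that the constructed map is genuinely a \emph{quotient} map and not merely continuous and surjective — i.e.\ correctly using the hypothesis that $f$ is initial and that the fiber over $b$ is a single point. The initiality of $f$ is exactly what guarantees that the neighborhood filter of $a$ in $X$ (equivalently in $X_a$) is not strictly finer than the filter generated by preimages of neighborhoods of $b$; without $\inv f(b)=\{a\}$ one could have further open sets containing $a$ coming from separating $a$ from the other points of $\inv f(b)$, and the prime factor $X_a$ could fail to be controlled by $Y_b$ alone. So the crux is to show: $U\ni a$ open in $X_a$ $\iff$ $U\cap Y_b$ contains the $Y_b$-trace of some open $W\ni b$ in $Y$, and then to package the $D_y$'s correctly so the combined map's quotient topology agrees with the topology of $X_a$.
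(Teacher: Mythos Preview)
Your construction is natural, but the combined map you build is \emph{not} a quotient map in general, so the argument breaks down precisely at the point you flagged as the main obstacle. Take $Y=\{b,y\}$ to be the \Sierp space with $\{y\}$ open, $X=\{a,x_1,x_2\}$ with $f(a)=b$ and $f(x_1)=f(x_2)=y$. The initial topology on $X$ has only $\emptyset,\{x_1,x_2\},X$ as open sets, so in $X_a$ the sole neighbourhood of $a$ is $X$ itself. With the section $g(b)=a$, $g(y)=x_1$, the set $U=\{a,x_1\}$ is not open in $X_a$, yet $g^{-1}(U)=Y_b$ is open in $Y_b$ and the $D_y$-preimage is open automatically. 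The error is in the claimed equivalence: from ``$g^{-1}(U)\supseteq W$'' you only obtain $g(W)\subseteq U$, i.e.\ one chosen representative of each fibre over $W$ lies in $U$; this does not force $f^{-1}(W)\subseteq U$, and the discrete summands $D_y$ never help because their preimages are always open.

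The fix is to use many sections rather than one. For every set-theoretic section $s$ of $f$ the same computation shows that $g_s\colon Y_b\to X_a$, $g_s=s$, is continuous; the combined map $\coprod_s Y_b\to X_a$ is then a genuine quotient. Indeed, if $U\ni a$ has all preimages open, choose the section $s^\ast$ sending each $y\ne b$ into $f^{-1}(y)\setminus U$ whenever that set is nonempty; openness of $g_{s^\ast}^{-1}(U)$ in $Y_b$ forces an open $W\ni b$ with $f^{-1}(y)\subseteq U$ for all $y\in W$, i.e.\ $f^{-1}(W)\subseteq U$. The paper's proof starts from the same ingredient --- a single section $g\colon Y_b\to X_a$, shown continuous via initiality --- but then concludes in one line via a retraction argument; as literally printed that final line has the composition and the direction of the coreflective-hull membership garbled, so your more explicit quotient construction, once corrected to use enough copies of $Y_b$, is arguably the cleaner route.
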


\begin{proof}
Let $\Zobr g{Y_b}{X_a}$ be any map such that $f(g(x))=x$ for any $x\in X_a$ (in particular,
$g(b)=a$). We first show that $g$ is continuous.

If $a\in U$ and $U$ is open in $X_a$, then there exists an open set $U'\subseteq Y$ with
$a\in\Invobr f{U'}\subseteq U$. Then $\Invobr gU \supseteq \Invobr g{\Invobr f{U'}} = U' \ni
b$, hence $\Invobr gU$ is open in $Y_b$.

We have continuous maps $f$, $g$ such that $f\circ g=id_{X_a}$. So $f$ is a retraction,
thus it is a quotient map and $X_a\in\CHb{Y_b}$.
\qed
\end{proof}

\begin{corollary}
Let $X$ be a \tsp, $a\in X$ be a point such that $X_a$ is $T_2$ and $\Zobr rX{RX_0}$ be the $T_0$-reflection of $X$.
Then $X_a\in\CHb{(R_0X)_b}$, where $b=r(a)$.
\end{corollary}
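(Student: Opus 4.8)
The plan is to apply Lemma~\ref{LMINIMAPPR} to the $T_0$-reflection arrow $\Zobr rX{R_0X}$. Recall from the discussion above that $r$ is a surjective quotient map and, crucially, an initial map. So once we know that $\inv r(b)=\{a\}$, where $b=r(a)$, Lemma~\ref{LMINIMAPPR} (applied with $f=r$ and $Y=R_0X$) gives immediately $X_a\in\CHb{(R_0X)_b}$, which is exactly the assertion.

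Thus the whole argument reduces to checking that the $\sim$-equivalence class of $a$ is the singleton $\{a\}$, and this is precisely where the hypothesis that $X_a$ is $T_2$ enters. First I would note that, since a prime factor is always a prime or discrete space, $X_a$ is $T_2$ \iaoi it is $T_1$, and that $X_a$ is $T_1$ precisely when, for every $y\ne a$, the point $a$ has an open neighborhood in $X_a$ not containing $y$; by the definition of the prime factor these are the same as the open neighborhoods of $a$ in $X$. Next I would observe that if $y\sim a$, \ie $\ol{\{y\}}=\ol{\{a\}}$, then $a\in\ol{\{y\}}$, which means that every open neighborhood of $a$ in $X$ contains $y$. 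For $y\ne a$ this contradicts the previous sentence, so the equivalence class of $a$ collapses to $\{a\}$, giving $\inv r(b)=\{x\in X: x\sim a\}=\{a\}$. Feeding this into Lemma~\ref{LMINIMAPPR} finishes the proof.

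The main (essentially the only) obstacle is this middle step: converting the separation hypothesis on the prime factor $X_a$ into the statement that the $T_0$-reflection does not identify $a$ with any other point. It rests on two facts recalled in the preliminaries — that $a$ has the same neighborhoods in $X$ and in $X_a$, and that $x\sim y$ holds \iaoi $\ol{\{x\}}=\ol{\{y\}}$ — and once those are in place the contradiction is routine.
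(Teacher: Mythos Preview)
Your proof is correct and follows exactly the paper's approach: show that $\inv r(b)=\{a\}$ by checking that the $\sim$-equivalence class of $a$ is a singleton, and then apply Lemma~\ref{LMINIMAPPR}. The paper compresses the singleton step into the line ``since $\ol{\{a\}}=\{a\}$'', while you argue directly that $y\sim a$ with $y\ne a$ would give $a\in\ol{\{y\}}$, contradicting the $T_1$-ness of $X_a$; the substance is the same, and your formulation is in fact the cleaner justification.
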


\begin{proof}
Since $\ol{\{a\}}=\{a\}$, the equivalence class of the point $a$ consists of this single
point. Therefore $\inv r(b)=\{a\}$ holds for the $\Top_0$-reflection $r$ of $X$. The claim
follows now from Lemma \ref{LMINIMAPPR}. \qed
\end{proof}

Note that, if $\Kat A$ is none of the categories $\Top$, $\Top_0$, then all prime
factors belonging to \Kat A are Hausdorff.
So we see from the above corollary that, if $\Kat B$ is an AD-class
in an epireflective subcategory $\Kat A\ne\Top,\Top_0$
and $R_0\Kat B$ is closed under the formation of prime factors,
then $\Kat B$ is closed under the formation of prime factors too.

So our results for arbitrary epireflective subcategories can be subsumed as follows:

\begin{proposition}
Let $\Kat A\ne\Ind$ be an epireflective subcategory of \Top and \Kat B be an AD-class in \Kat A. If
$\Kat A\cap\Top_0\subseteq\Haus$ or the subcategories $\Kat A\cap\Top_0$ and $\Kat B\cap\Top_0$
fulfill the assumptions of Theorem
\ref{THMP} or those of Corollary \ref{CORNLC},
then \Kat B is hereditary \iaoi it is closed under the formation of prime factors which belong to \Kat A.
\end{proposition}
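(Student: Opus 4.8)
The plan is to reduce the statement to the non-bireflective situation treated in the body of the paper, so that the only real work is the passage between $\Kat A$ and $\Kat A\cap\Top_0$. First I would dispose of the degenerate subcase $\Kat B\subseteq\Disc$: subspaces and prime factors of discrete spaces are discrete, so both sides of the asserted equivalence hold. Hence we may assume that $\Kat B$ contains a non-discrete space.

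Suppose first that $\At\notin\Kat A$. Then $\Kat A=\Kat A\cap\Top_0$ and $\Kat B=\Kat B\cap\Top_0$, and since an epireflective subcategory of \Top omitting $\At$ is closed under the formation of prime factors, closedness under the prime factors belonging to $\Kat A$ coincides with closedness under prime factors. The implication ``closed under prime factors implies hereditary'' is the Lemma at the beginning of Section~3. For the converse, if \Kat B is hereditary it is an HAD-class in \Kat A, and then it is closed under prime factors: by Theorem~\ref{THMHAUS} when $\Kat A\subseteq\Haus$, by Theorem~\ref{THMP} under the $P(b,Y,Z)$-hypothesis, and by Corollary~\ref{CORNLC} in the remaining listed situation. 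This settles the proposition for non-bireflective \Kat A, and in particular for the pair $(\Kat A\cap\Top_0,\Kat B\cap\Top_0)$ that occurs below.

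Now suppose $\At\in\Kat A$. Write $\Kat A_0=\Kat A\cap\Top_0$ and $\Kat B_0=\Kat B\cap\Top_0$. Then $\Kat A_0$ is epireflective with $\At\notin\Kat A_0$, and $\Kat B_0$ is an AD-class in $\Kat A_0$: closure under topological sums is immediate, and if $q\colon X\to Y$ is a quotient with $X\in\Kat B_0$ and $Y\in\Kat A_0$, then $X\in\Kat B$ and $Y\in\Kat A$ force $Y\in\Kat B$, whence $Y\in\Kat B_0$. By the lemma identifying $\Kat C\cap\Top_0$ with $R_0\Kat C$ we also have $\Kat B_0=R_0\Kat B$. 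Theorem~\ref{THMHERR0} says \Kat B is hereditary \iaoi $\Kat B_0$ is, and the non-bireflective case applied to $(\Kat A_0,\Kat B_0)$, whose hypotheses are exactly those imposed on $\Kat A\cap\Top_0$ and $\Kat B\cap\Top_0$ in the statement, shows this is equivalent to $\Kat B_0$ being closed under prime factors. It remains to match this with closedness of \Kat B under the prime factors belonging to \Kat A. One direction is formal: assuming the latter, for $X\in\Kat B_0$ and $a\in X$ the prime factor $X_a$ lies in $\Kat A_0\subseteq\Kat A$ (again $\Kat A_0$ omits $\At$, hence is closed under prime factors), so $X_a\in\Kat B$ by hypothesis and therefore $X_a\in\Kat B_0$. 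For the reverse direction, if $\Kat A\ne\Top$ then every prime factor lying in \Kat A is Hausdorff, and the corollary immediately preceding this proposition gives that closedness of $R_0\Kat B=\Kat B_0$ under prime factors forces \Kat B to be closed under all its prime factors, in particular those in \Kat A. Finally, if $\Kat A=\Top$ then \Kat A is quotient-reflective, so \Kat B is a coreflective subcategory of \Top; under our hypotheses $\Kat B\ne\CH{Ind}$ (here $\Kat B\cap\Top_0=R_0\CH{Ind}=\Disc$ would satisfy none of the three alternatives), and the desired equivalence is \v{C}in\v{c}ura's theorem recalled in the Introduction.

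The step I expect to be the main obstacle is this last matching in the bireflective case, namely the two-way transfer of closedness under prime factors between \Kat B and $R_0\Kat B$. The passage from \Kat B to $R_0\Kat B$ is mere bookkeeping, but the reverse one rests on a non-isolated, $T_0$-separated point forming a one-point equivalence class, \ie it needs the prime factors of \Kat A to be Hausdorff; this is precisely why $\Kat A=\Top$ has to be singled out and absorbed into the classical result. Everything else is routine: checking that $\Kat B\cap\Top_0$ is an AD-class in $\Kat A\cap\Top_0$, and that intersecting with $\Top_0$ leaves the three alternative hypotheses unchanged, since they are already phrased for the intersected categories.
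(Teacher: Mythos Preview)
Your proof is correct and follows essentially the same route as the paper, which in fact states this proposition only as a summary without giving a detailed proof. You correctly separate the bireflective and non-bireflective cases, invoke Theorem~\ref{THMHERR0} for the heredity transfer, and use the corollary on $T_0$-reflections together with the observation that prime factors lying in $\Kat A$ are Hausdorff (when $\Kat A\ne\Top,\Top_0$) for the prime-factor transfer; your separate treatment of $\Kat A=\Top$ via \v{C}in\v{c}ura's original theorem, after ruling out $\CH{Ind}$, is exactly right.

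One very minor imprecision: you write that closedness of $R_0\Kat B$ under prime factors ``forces $\Kat B$ to be closed under all its prime factors''. Strictly speaking, the corollary you cite only applies to prime factors $X_a$ that are Hausdorff; a space like $\At\in\Kat B$ has the Sierpi\'nski space as a prime factor, and nothing in the argument forces that into $\Kat B$. What the argument actually yields is closedness under \emph{Hausdorff} prime factors, which suffices because every prime factor lying in $\Kat A$ (with $\Kat A\ne\Top$) is Hausdorff. The paper's own informal remark before the proposition has the same looseness, and your final conclusion is unaffected.
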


In particular we get that $\Kat B$ is closed under the formation of prime factors which are Hausdorff.

\section{AD-classes and HAD-classes containing a prime space}\label{SECTPRIM}

We have shown in Corollary \ref{CORP} that if an HAD-class contains a prime space then it is
closed under the formation of prime factors. In connection with this result it seems
useful to give some conditions on an HAD-class \Kat B which imply that \Kat B contains at
least one prime space.

Unfortunately we were able neither to find a counterexample to the claim that every
HAD-class (in an epireflective subcategory \Kat A of \Top with $\At\notin\Kat A$) contains a prime space nor to prove this in general.

We have already shown that if an HAD-class $\Kat B$ contains a Hausdorff non-discrete space then it contains
a prime space (Proposition \ref{PRHAUSND}). In this section we provide further sufficient
conditions. The main results we obtain are the following: If \Kat B contains a space which is
not locally connected, then it contains a prime space (Corollary \ref{CORNLC}). The same
holds for non-discrete totally disconnected spaces.

We also show that an AD-class contains a prime space \iaoi it contains the space $\Calp$ for
some regular cardinal $\alpha$. At the end of this section we provide some consequences of
our results for the lattices of all coreflective subcategories of \Top and of
some epireflective subcategories of \Top.

\subsection{AD-classes containing $\Calp$}

\begin{definition}\label{DEFCALP}
For any infinite cardinal $\alpha$ we denote by $\Calp$ the space on the set
$\alpha\cup\{\alpha\}$ such that each $\beta\in\alpha$ is isolated and the sets
$B_\beta=\{\xi\in\alpha\cup\{\alpha\};\xi\geq\beta\}$ for $\beta<\alpha$ form a local base at
$\alpha$.
\end{definition}

The most important case is the case when $\alpha$ is regular,
since for any $\alpha\in\Cn$ there exists a regular cardinal
$\beta$ with $\CHb{\Calp}=\CHb{\Cbet}$. (Namely, $\beta$ is the
cofinality of $\alpha$.)

If $\alpha$ is regular we have a simpler description of the topology of $\Calp$: A subset $V$
of $\Calp$ is open if either $\alpha\notin V$ or $\card (\Calp\setminus V) < \alpha$. Note
that this implies that every injective map $\Zobr f{\Calp}{\Calp}$ such that $f(\alpha)=\alpha$
is continuous.

In this part we show that an AD-class contains a
prime space \iaoi it contains some space \Calp. We first state two
lemmas needed in the proof.

\begin{lemma}\label{LMPRECCALP}
Let $\alpha$ be an infinite regular cardinal. If $Y\prec\Calp$ is a prime space $($with the
accumulation point $\alpha$$)$, then $\Calp\in \CHb Y$.
\end{lemma}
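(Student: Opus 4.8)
The goal is to show that if $\alpha$ is an infinite regular cardinal and $Y \prec \Calp$ is a prime space with accumulation point $\alpha$, then $\Calp \in \CHb Y$. Since $\CHb Y$ is the coreflective hull of $Y$ in $\Top$, it suffices to exhibit $\Calp$ as a quotient of a topological sum of copies of $Y$ (possibly together with discrete spaces, which lie in every coreflective subcategory). The first thing I would do is extract the combinatorial content of the hypothesis $Y \prec \Calp$: the underlying set of $Y$ is again $\alpha \cup \{\alpha\}$, every ordinal $\beta < \alpha$ is isolated in $Y$ (since it is isolated in $\Calp$ and $Y$ is finer), and the neighborhood filter of $\alpha$ in $Y$ is finer than that of $\Calp$. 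Because $\alpha$ is not isolated in $Y$ (as $Y$ is a prime space), the neighborhoods of $\alpha$ in $Y$ form a filter whose trace on $\alpha$ is a filter $\mathcal F$ extending the cofinite-tail filter; in particular, since $\alpha$ is regular, every $V \in \mathcal F$ has cardinality $\alpha$ (its complement has size $<\alpha$, as the $\Calp$-open sets containing $\alpha$ do), so $\alpha \setminus V$ has cardinality $< \alpha$ for each such $V$.

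The key step is to build a quotient map from $\coprod_{\xi < \alpha} Y$ onto $\Calp$. Intuitively, I want to use $\alpha$ many copies of $Y$, identify all their accumulation points to a single point that will become the accumulation point $\alpha$ of $\Calp$, and then arrange the isolated points so that every $B_\beta$ (the standard basic neighborhood of $\alpha$ in $\Calp$) pulls back to an open set. Concretely: fix for each $\xi < \alpha$ an injection of the isolated points of the $\xi$-th copy of $Y$ onto a subset $S_\xi$ of $\alpha$, sending the copy of the accumulation point to the point $\alpha \in \Calp$; I would choose the $S_\xi$ cofinal in $\alpha$ (this uses that $|V| = \alpha$ for every $Y$-neighborhood $V$ of $\alpha$: the image of such a $V$ can be taken cofinal) and with $\bigcup_\xi S_\xi = \alpha$, so that the combined map $q$ is surjective. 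One then checks that $q$ is a quotient map: a set $U \subseteq \Calp$ is open iff its preimage in each summand is open. For $U$ not containing $\alpha$ this is automatic (all relevant points isolated). For $U \ni \alpha$, the preimage being open in the $\xi$-th copy means $q^{-1}(U) \cap Y_\xi$ is a $Y$-neighborhood of the accumulation point, hence (by finer-than-$\Calp$ and regularity) its complement in $S_\xi$ is small; summing over all $\xi$ and using regularity again forces $\alpha \setminus U$ to have size $< \alpha$, i.e. $U$ is open in $\Calp$. The converse direction — that $q$ is continuous, i.e. $q^{-1}$ of a $\Calp$-open $U \ni \alpha$ is open in each summand — is where the hypothesis $Y \prec \Calp$ is used directly: $q$ restricted to the $\xi$-th summand is essentially an injection $Y \to \Calp$ fixing the accumulation point, and by the final remark before Lemma \ref{LMPRECCALP} such a map is automatically continuous, so $q^{-1}(U) \cap Y_\xi$ is open in $Y_\xi$.

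The main obstacle I anticipate is the bookkeeping to guarantee simultaneously that (i) $q$ is surjective, (ii) each $Y$-neighborhood of the accumulation point maps into a cofinal subset of $\alpha$ so that the quotient topology on $\Calp$ is not strictly finer than the intended one, and (iii) the family $\{S_\xi\}$ is "spread out" enough that a set whose preimage is open in every summand must be $\Calp$-open. A clean way to handle (iii): arrange that for each $\beta < \alpha$ there are $\alpha$ many indices $\xi$ with $\beta \notin S_\xi$, or more simply enumerate $\alpha$ as a disjoint union $\alpha = \bigsqcup_{\xi<\alpha} T_\xi$ with each $T_\xi$ of size $\alpha$ and let $S_\xi \supseteq T_\xi$; then if $q^{-1}(U)$ is open in every summand, for each $\xi$ the set $T_\xi \setminus U$ has size $<\alpha$, and a short regularity argument (the union of $\alpha$ sets, but each "localized" — actually one needs to be slightly careful here and may instead argue directly that $\alpha \setminus U = \bigcup_\xi (S_\xi \setminus U)$ with a uniform cofinal witness) shows $\alpha \setminus U$ is bounded, hence $U$ is $\Calp$-open. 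I would double-check this cardinality step against a potential off-by-$\alpha$ error; if a single sum of $\alpha$ copies does not suffice, one can instead first embed $Y$ as a retract-like subspace (exploiting Lemma \ref{LMSSPPRIM}, since $Y$ is a prime subspace of $\Calp$ — this gives a retraction $\Calp \to Y$!) and combine that with the obvious quotient map $\coprod_a X_a \to X$ applied to $\Calp$ itself, reducing the whole statement to showing $Y \in \CHb{\Calp}$ is witnessed reversibly. In fact the retraction $r \colon \Calp \to Y$ from Lemma \ref{LMSSPPRIM} may be the slickest route: it shows $Y$ is a quotient of $\Calp$, and then the task becomes recovering $\Calp$ back from $Y$ — which is exactly the quotient construction above, now with the comfort that $Y$ "contains all of $\Calp$'s points" so surjectivity of $q$ is free.
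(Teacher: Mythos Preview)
Your plan has a genuine gap at its very first step, and it is a direction-of-inequality slip that propagates through the whole argument. You assert that for every $Y$-neighbourhood $V$ of $\alpha$ one has $\lvert \alpha\setminus V\rvert<\alpha$, justifying this by ``finer-than-$\Calp$''. But $Y\prec\Calp$ means $Y$ is \emph{finer}, so $Y$ has \emph{more} open neighbourhoods of $\alpha$ than $\Calp$, not fewer; a $Y$-neighbourhood need not be a $\Calp$-neighbourhood at all. Concretely, for $\alpha=\omega_1$ let $E\subseteq\omega_1$ be the set of limit ordinals and declare the $Y$-neighbourhoods of $\omega_1$ to be generated by the sets $\{\omega_1\}\cup(B_\beta\cap E)$. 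Then $Y\prec C(\omega_1)$, yet $V=\{\omega_1\}\cup E$ is a $Y$-neighbourhood with $\lvert\omega_1\setminus V\rvert=\omega_1$. Consequently the step ``$q^{-1}(U)\cap Y_\xi$ open in $Y_\xi$ $\Rightarrow$ its complement in $S_\xi$ is small'' fails, and with it your mechanism for forcing $\lvert\alpha\setminus U\rvert<\alpha$. (What \emph{is} true is that every $Y$-neighbourhood of $\alpha$ has cardinality $\alpha$; otherwise removing the isolated part, which is $\Calp$-closed hence $Y$-closed, would isolate $\alpha$. But that is a much weaker statement and does not rescue the argument.) Your fallback via Lemma~\ref{LMSSPPRIM} does not work either: that lemma applies to prime \emph{subspaces}, and $Y$ is not a subspace of $\Calp$ unless $Y=\Calp$, since the subspace topology on the full underlying set is $\Calp$ itself.

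The paper's proof sidesteps this entirely by taking, as maps $Y\to X$, the family of \emph{all} injective self-maps of $\alpha\cup\{\alpha\}$ fixing $\alpha$, and giving $X$ the resulting final topology. Then one never needs any information about the $Y$-neighbourhood filter beyond the single fact that $\alpha$ is non-isolated in $Y$: for any $V\subseteq\alpha$ with $\lvert V\rvert=\alpha$ there is some $f_i$ mapping $\alpha$ bijectively onto $V$, and since $f_i^{-1}(V)=\alpha$ is not closed in $Y$, the set $V$ is not closed in $X$. Thus every closed subset of $X$ missing $\alpha$ has size $<\alpha$, i.e.\ $X=\Calp$. The moral is that trying to economize on the number of copies of $Y$ forces you to analyse the $Y$-filter, which you cannot control; using all injections reduces the problem to a one-line combinatorial observation.
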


\begin{proof}
Let $f_i$, $i\in I$, be the family of all injective mappings $\Zobr
{f_i}{\alpha\cup\{\alpha\}}{\alpha\cup\{\alpha\}}$ such that $f_i(\alpha)=\alpha$. Let us
denote by $X$ the \tsp on $\alpha\cup\{\alpha\}$ with the final topology with respect to the
family $\Zobr {f_i}YX$. We claim that $X=\Calp$.

\noindent One of the maps $f_i$ is the identity, hence $Y\prec X$ and $\alpha$ is
non-isolated in $X$.

\noindent Since $Y\prec \Calp$ and all $f_i$'s considered as maps from $\Calp$ to
$\Calp$ are continuous, we get $X\prec\Calp$.

To verify that $\Calp\prec X$ we show that any set which is not closed in $\Calp$ is
not closed in $X$.

From $X\prec\Calp$ follows that $X$ is a prime space, therefore it suffices to compare the sets not
containing its accumulation point $\alpha$. So let $V$ be a subset of $\alpha$ with
cardinality $\alpha$ and $\alpha\notin V$. Then there exists an $i\in I$ such that $f_i$ maps
bijectively the set $\alpha$ to $V$. Since the subset $\alpha$ is not closed in $Y$ (the
point $\alpha$ is not isolated), we get that $V$ is not closed in $X$.
\qed
\end{proof}

\begin{lemma}\label{LMPRECCBET}
Let $\alpha$ be any infinite cardinal. If $Y\prec\Calp$ is a prime space $($with the
accumulation point $\alpha$$)$, then there exists a regular cardinal $\beta$ with $\Cbet\in
\CHb Y$.
\end{lemma}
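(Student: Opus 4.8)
The plan is to reduce Lemma \ref{LMPRECCBET} to Lemma \ref{LMPRECCALP} by replacing $\alpha$ with its cofinality. Recall that we already noted $\CHb{\Calp}=\CHb{C(\cof{\alpha})}$ because the set $B_\beta\cap\cof\alpha$ (along a cofinal copy of $\cof\alpha$ inside $\alpha$) gives a subspace of $\Calp$ homeomorphic to $C(\cof\alpha)$ that is a retract of $\Calp$; conversely $\Calp$ is a quotient of a sum of copies of $C(\cof\alpha)$. So setting $\beta:=\cof\alpha$, which is a regular infinite cardinal, it suffices to produce a prime space $Y'\prec\Cbet$ with $\Cbet\in\CHb{Y'}$ and $Y'\in\CHb{Y}$, and then invoke Lemma \ref{LMPRECCALP}.

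First I would fix a strictly increasing cofinal map $\Zobr j{\beta}{\alpha}$ with $j(\xi)\to\alpha$, and extend it to $\bar j\colon\Cbet\to\Calp$ by $\bar j(\beta)=\alpha$. This $\bar j$ is a continuous injective map (a subspace embedding onto the image, since the topology of $\Cbet$ at $\beta$ is exactly the one induced from $\Calp$ via $\bar j$: a basic neighborhood $B_\xi$ of $\beta$ in $\Cbet$ pulls back the neighborhood $B_{j(\xi)}$ of $\alpha$ in $\Calp$, using that $j$ is increasing and cofinal). Now let $Y'$ be the subspace of $Y$ on the set $\bar j[\Cbet]=\{\alpha\}\cup j[\beta]$. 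Since $Y\prec\Calp$, we have $Y'\prec$ (the subspace of $\Calp$ on that set) $=\bar j[\Cbet]\cong\Cbet$, and $\alpha$ remains non-isolated in $Y'$: indeed a cofinal subset of $\alpha$ of the form $j[\beta]$ cannot be closed in $Y$ since $Y$ is a prime space with accumulation point $\alpha$ — wait, this needs care, because $Y$ having $\alpha$ as accumulation point only guarantees that $\alpha$ is not isolated in $Y$, not that this particular subset accumulates at $\alpha$. So instead I would argue as follows: every neighborhood of $\alpha$ in $Y$ is a neighborhood of $\alpha$ in $\Calp$ (as $Y\prec\Calp$ and both are prime spaces with the same accumulation point), hence contains some $B_\gamma$; since $j$ is cofinal, $B_\gamma\cap j[\beta]\ne\emptyset$, so $\alpha$ is non-isolated in $Y'$. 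Thus $Y'$ is a prime space with $Y'\prec\Cbet$.

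Next, $Y'$ is a subspace of the prime space $Y$, hence by Lemma \ref{LMSSPPRIM} (and the remark following it) $Y'\in\CHb{Y}$. Finally apply Lemma \ref{LMPRECCALP} with the regular cardinal $\beta$ and the prime space $Y'\prec\Cbet$: we get $\Cbet\in\CHb{Y'}\subseteq\CHb{Y}$. This completes the proof.

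The main obstacle I anticipate is the verification that $\alpha$ is still non-isolated in the cut-down subspace $Y'$ — one must not confuse "$\alpha$ is the accumulation point of the prime space $Y$" with "$j[\beta]$ accumulates to $\alpha$ in $Y$," and the fix is to push the comparison $Y\prec\Calp$ down to the level of neighborhood bases of $\alpha$ and exploit cofinality of $j$. A secondary routine point is confirming that $\bar j$ is genuinely an embedding $\Cbet\hookrightarrow\Calp$ rather than merely a continuous bijection onto its image; this is where regularity of $\beta$ (equivalently, cofinality of $j$ together with strict monotonicity) is used, so that the traces of the $B_\gamma$'s on $j[\beta]$ are exactly the $\bar j$-images of the $B_\xi$'s.
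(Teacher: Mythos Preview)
Your argument has a genuine gap at the step where you claim that $\alpha$ remains non-isolated in the subspace $Y'$. You write ``every neighborhood of $\alpha$ in $Y$ is a neighborhood of $\alpha$ in $\Calp$ (as $Y\prec\Calp$ \ldots)'', but this has the implication backwards: $Y\prec\Calp$ means $Y$ is \emph{finer} than $\Calp$, so $Y$ has \emph{more} open sets, not fewer. In particular $\alpha$ may have neighborhoods in $Y$ that miss the cofinal set $j[\beta]$ entirely, making $\alpha$ isolated in $Y'$.

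Here is a concrete counterexample. Take $\alpha=\aleph_\omega$, $\beta=\cof\alpha=\omega$, and $j(n)=\omega_n$. Define $Y$ on the set $\alpha\cup\{\alpha\}$ by declaring $U$ open iff $\alpha\notin U$, or $\alpha\in U$ and $B_\gamma\setminus j[\omega]\subseteq U$ for some $\gamma<\alpha$. One checks easily that this is a topology, that $Y\prec\Calp$ (each $B_\gamma$ contains $B_\gamma\setminus j[\omega]$), and that $\alpha$ is non-isolated in $Y$ (since each $B_\gamma\setminus j[\omega]$ has cardinality $\aleph_\omega$). But the open set $U=\{\alpha\}\cup(\alpha\setminus j[\omega])$ in $Y$ meets $\{\alpha\}\cup j[\omega]$ only in $\{\alpha\}$, so $\alpha$ is isolated in your $Y'$ and $Y'$ is discrete. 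Thus Lemma~\ref{LMPRECCALP} cannot be applied to $Y'$.

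The paper avoids this by passing to a \emph{quotient} rather than a subspace: one takes a quotient map $\Zobr q{\Calp}{\Cbet}$ with $\inv q(\beta)=\{\alpha\}$ and lets $Y'$ be the quotient of $Y$ under the same map. Then $\beta$ is non-isolated in $Y'$ simply because $\inv q(\beta)=\{\alpha\}$ is not open in $Y$, and $Y'\prec\Cbet$ follows from $Y\prec\Calp$ together with continuity of $q$. The point is that quotients respect ``$\alpha$ is non-isolated'' in the needed direction, while restriction to a subspace need not.
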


\begin{proof}
Let $\beta$ be the cofinality of $\alpha$. There exists a quotient map $\Zobr
q{\Calp}{\Cbet}$ which maps only the point $\alpha$ to $\beta$. Let $Y'$ be the quotient of
$Y$ with respect to the same map $q$. Then $Y'$ is a prime space, since $\inv
q(\beta)=\{\alpha\}$ and $\alpha$ is not isolated in $Y$. Moreover, $Y'\prec\Cbet$ and $\beta$
is a regular cardinal, thus $\Cbet\in\CHb{Y'} \subseteq \CHb Y$ by Lemma \ref{LMPRECCALP}.
\end{proof}

\begin{proposition}\label{PRPRIMIMPCALP}
If an AD-class \Kat B in an epireflective subcategory $\Kat A\ne\Ind$ contains a prime $T_2$-space
then it contains $\Calp$ for some regular cardinal number $\alpha$.
\end{proposition}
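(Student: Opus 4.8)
The goal is to start from an arbitrary prime $T_2$-space $P\in\Kat B$ and produce some $\Calp$ (with $\alpha$ regular) inside $\Kat B$. Since $\Kat B$ is closed under quotients of sums of its members (as an AD-class in $\Kat A$), it suffices to realize $\Calp$ — or rather a space $Y'\prec\Calp$ that is still prime — as a quotient of a topological sum of copies of $P$, and then invoke the lemmas already at hand. The natural first step is to look at the accumulation point $p$ of $P$ and pick a well-ordered chain of open neighborhoods of $p$ cofinal in the neighborhood filter at $p$; let $\alpha$ be the cofinality of that filter (a regular cardinal). Using the chain one builds a continuous surjection from a suitable prime subspace of $P$ (or from $P$ itself) onto a prime space $Y$ with $Y\prec\Calp$ and accumulation point $\alpha$: map the $\beta$-th "layer" of neighborhoods to the ordinal $\beta$ and $p$ to $\alpha$. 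The point is that the quotient collapses the (possibly complicated) neighborhood structure of $P$ onto the linearly ordered one of $\Calp$, while keeping $p$ non-isolated, so the image is a prime space coarser than $\Calp$.

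\textbf{Key steps in order.} First, fix the accumulation point $p$ of $P$ and set $\alpha:=\cof{\chi(p,P)}$ where $\chi(p,P)$ is the character; choose a strictly decreasing chain $\{W_\beta:\beta<\alpha\}$ of open neighborhoods of $p$ which is a local base at $p$, with $\bigcap_\beta W_\beta$ not a neighborhood of $p$ (this is exactly what makes $p$ non-isolated and uses regularity of $\alpha$). Second, choose points witnessing the strict descent: for each $\beta<\alpha$ pick $x_\beta\in W_\beta\setminus W_{\beta+1}$ (possible by strictness), and let $Q$ be the prime subspace of $P$ on $\{p\}\cup\{x_\beta:\beta<\alpha\}$; by Lemma \ref{LMSSPPRIM} (or rather the remark after it), $Q$ and all its subspaces lie in $\CHbA AP\subseteq\Kat B$. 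Third, define $\Zobr q{Q}{Y}$ by $q(x_\beta)=\beta$, $q(p)=\alpha$, where $Y$ carries the quotient topology on $\alpha\cup\{\alpha\}$; check that $Y$ is prime with accumulation point $\alpha$ (because $\inv q(\alpha)=\{p\}$ and $p$ is non-isolated) and that $Y\prec\Calp$ (every $B_\beta$ pulls back to an open set, since $\inv q(B_\beta)\supseteq W_{\beta}\cap Q$ up to finitely many points). Fourth, since $\alpha$ is regular and $Y\prec\Calp$ is prime, Lemma \ref{LMPRECCALP} gives $\Calp\in\CHb Y\subseteq\CH B$, and since $\Calp\in\Kat A$ (as $\At\notin\Kat A$ forces $\Kat A$ to be large enough; or, if $\Kat A=\Top_0$, $\Calp$ is trivially $T_0$), we get $\Calp\in\CH B\cap\Kat A=\ADA AB\subseteq\Kat B$ by Lemma \ref{LMADAB}.

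\textbf{Alternative via Lemma \ref{LMPRECCBET}.} If one does not want to insist on choosing a chain cofinal of regular length from the outset, one can instead map $P$ (or the prime subspace $Q$) to $\Calp$ for $\alpha$ the \emph{full} character of $p$, obtaining a prime $Y\prec\Calp$ without worrying about regularity, and then apply Lemma \ref{LMPRECCBET} to replace $\alpha$ by its cofinality. This is cleaner: one only needs a local base $\{V_\xi:\xi<\alpha\}$ at $p$ (not strictly decreasing, not necessarily of regular length), pick representatives, form the prime subspace, take the obvious quotient onto a prime $Y\prec\Calp$, and Lemma \ref{LMPRECCBET} finishes. Either way the conclusion $\Cbet\in\CHb Y\subseteq\CH B$ combined with $\Cbet\in\Kat A$ and Lemma \ref{LMADAB} yields $\Cbet\in\Kat B$.

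\textbf{Main obstacle.} The delicate point is verifying that the quotient map $q$ is well-defined and that the quotient space $Y$ is genuinely coarser than $\Calp$ (i.e.\ $Y\prec\Calp$) rather than something strictly coarser that is no longer comparable — one must be careful that each basic open $B_\beta=\{\xi:\xi\ge\beta\}$ of $\Calp$ has open preimage in $Q$, which forces the right choice of representatives $x_\beta$ relative to the chain. A secondary point is ensuring $p$ stays non-isolated in the chosen prime subspace; this is automatic once the chain (or the local base) is genuinely a base at $p$ and $p$ was non-isolated in $P$ to begin with, but it must be stated. Everything else — continuity of $q$, primeness of $Y$, the membership $\Calp\in\Kat A$ — is routine given the earlier lemmas.
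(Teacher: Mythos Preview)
Your overall strategy---pass to a prime subspace of $P$, produce a prime space $Y\prec\Calp$, then invoke Lemma~\ref{LMPRECCALP} or~\ref{LMPRECCBET}---is the right shape, but there is a genuine gap in the construction of the chain. You assert the existence of a strictly decreasing chain $\{W_\beta:\beta<\alpha\}$ of open neighborhoods of $p$ that is a \emph{local base}. Such a linearly ordered base need not exist: take $P=\omega\cup\{p\}$ where the neighborhoods of $p$ are $\{p\}\cup U$ for $U$ in a free ultrafilter on $\omega$ with no linearly ordered base (most ultrafilters have this property). Without the chain being a base, your representative points $x_\beta\in W_\beta\setminus W_{\beta+1}$ need not accumulate at $p$, so $Q$ may fail to be a prime subspace. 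Your ``alternative'' route has the same defect one step later: with a non-decreasing base $\{V_\xi\}$ the set $\{x_\xi:\xi<\gamma\}$ need not be closed in $Q$, so $q^{-1}(B_\gamma)$ need not be open and you cannot conclude $Y\prec\Calp$.

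The paper avoids this entirely by choosing a different cardinal invariant. Instead of the character, let $\alpha$ be the \emph{smallest cardinality of a non-closed subset} of $P\setminus\{a\}$ (this exists since $P\setminus\{a\}$ itself is non-closed, and $\alpha$ is infinite because $P$ is $T_1$). Pick any such $C$ with $\card C=\alpha$. Then $C\cup\{a\}$ is a prime subspace of $P$ (hence in $\Kat B$ via Lemma~\ref{LMSSPPRIM}), and by minimality of $\alpha$ \emph{every} subset of $C$ of size $<\alpha$ is closed. Identifying $C$ with $\alpha$ bijectively, the complements of the basic sets $B_\beta$ all have size $<\alpha$ and are therefore closed; so the subspace $C\cup\{a\}$ is finer than $\Calp$ directly---no quotient map is needed. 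Lemma~\ref{LMPRECCBET} then yields $\Cbet\in\CHb{C\cup\{a\}}\subseteq\Kat B$ for a regular $\beta$. The moral: the ``tightness''-type invariant gives you closedness of small sets for free, whereas the character forces you to linearize the neighborhood filter, which is not always possible.
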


\begin{proof}
Let $P$ be a prime space with the accumulation point $a$. Denote by $\alpha$ the smallest
cardinality of a non-closed subset of $P\setminus \{a\}$. Let $C$ be some such subset.

If $V$ is any subset of $C$ with cardinality smaller than $\alpha$ then it is closed (since
$\alpha$ was chosen as the smallest cardinality of a non-closed set). Therefore $C\cup\{a\}$
is a prime \ssp of $P$ and it is finer than $\Calp$. (In the case that $\alpha$ is regular it
is even homeomorphic, but in either case complements of all basic neighborhoods $B_\beta$ of
$\alpha$ are closed.)

The claim follows now from Lemma \ref{LMPRECCBET}.
\qed
\end{proof}

Since every prime $T_2$-space is zero-dimensional, Proposition \ref{PRPRIMIMPCALP} could be
also deduced from Proposition \ref{PRZDCALP}. But the proof presented here is more
straightforward.

\subsection{How to obtain a prime space}

We now turn our attention to some conditions which are sufficient to enforce that an HAD-class
contains a prime space.

Let us denote by \Con the class of all connected spaces. By \cite[Satz 21.2.6]{HER} its
coreflective hull $\CH{\Con}$ consists precisely of sums of connected spaces. An equivalent
characterization is that $X\in\CH{\Con}$ \iaoi each point of $X$ has an open connected
neighborhood.

\begin{proposition}\label{PRNOTLOCCON}
If $X$ is not a sum of connected spaces then there exists a quotient map $\Zobr fXP$, where
$P$ is a prime $T_2$-space and $P\prec \Calp$.
\end{proposition}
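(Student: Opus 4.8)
The plan is to exploit the failure of local connectedness at a single point and mimic the construction in Proposition~\ref{PRHAUSND}, but using connected components of open neighborhoods as the "disjoint pieces'' instead of the disjoint open sets produced by Hausdorffness. First I would observe that since $X$ is not a sum of connected spaces, there is a point $a\in X$ having no open connected neighborhood; equivalently, the quasicomponent (in fact the connected component) of $a$ in every open neighborhood $W$ of $a$ is not itself open, so $W$ splits into $\ge 2$ pieces that cannot be amalgamated. I would then build, by transfinite recursion, a family of pairwise disjoint nonempty open sets $U_\beta$, $\beta<\alpha$, each lying in the previously-carved-out open neighborhood $V_\gamma=X\setminus\ol{\bigcup_{\eta\le\gamma}U_\eta}$ of $a$, exactly as in Proposition~\ref{PRHAUSND}, stopping when $a\in\ol{\bigcup_{\eta<\alpha}U_\eta}$; the recursion must terminate since $X$ is a set. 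The one new wrinkle at each step is the choice of $U_\beta$: having the open neighborhood $W=X\setminus\ol{\bigcup_{\eta<\beta}U_\eta}$ of $a$, since $W$ is not connected (its component of $a$ is not open, so in particular $W$ is not its own connected neighborhood of $a$) I pick a clopen-in-$W$ proper subset separating $W$, put $U_\beta$ to be the part not containing $a$, and note the complementary part is an open neighborhood of $a$ contained in $W$, which gives conditions (1)--(3).

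Next I would set $Y:=\{a\}\cup\bigcup_{\beta<\alpha}U_\beta$ and define $q\colon Y\to P$ by $q(a)=\alpha$ and $q[U_\beta]=\{\beta\}$, with $P$ carrying the quotient topology on $\alpha\cup\{\alpha\}$; conditions (1) and (2) make $q$ well defined, $\inv q(\{\beta\})=U_\beta$ makes each $\beta$ isolated, and (4) together with $\inv q(\alpha)=\{a\}$ makes $\alpha$ non-isolated, so $P$ is a prime space. For the $T_2$/regular-cardinal conclusion I would check, just as in Proposition~\ref{PRHAUSND}, that $\inv q(\{\xi:\xi>\beta\})=V_\beta\cap Y$ is open in $Y$, so every isolated point of $P$ is separated from $\alpha$, whence $P$ is $T_2$; and since the complements $P\setminus B_\beta=\{\xi:\xi<\beta\}=q[\,\bigcup_{\eta\le\beta}U_\eta\cap Y\,]$ pull back to $\bigcup_{\eta\le\beta}U_\eta$, which is closed in $Y$ by construction, the sets $B_\beta$ are open in $P$, giving $P\prec\Calp$.

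The step I expect to be the main obstacle is the choice of $U_\beta$, i.e.\ correctly translating "not a sum of connected spaces'' into a usable local statement. The characterization quoted just before the proposition says $X\in\CH{\Con}$ iff every point has an open connected neighborhood, so failure means \emph{some} point $a$ has no such neighborhood; one must be careful that this gives, for \emph{every} open $W\ni a$, a separation of $W$ into two relatively open sets with $a$ in one of them and a nonempty part away from $a$ (this is exactly the assertion that $W$ is disconnected, which holds because otherwise $W$ itself would be an open connected neighborhood of $a$). Once that is pinned down, the rest is a routine adaptation of Proposition~\ref{PRHAUSND} with $T_2$-separation of the two points $a,b$ replaced by the separation $W=W_0\sqcup W_1$ with $a\in W_1$, $U_\beta\subseteq W_0$; I would also remark that, just as there, the procedure can only fail to start if $a$ is isolated, which is impossible since an isolated point's singleton is a clopen connected neighborhood, contradicting the hypothesis on $a$.
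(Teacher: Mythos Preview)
Your proposal contains a real gap: you construct a quotient map $q\colon Y\to P$ from the \emph{subspace} $Y=\{a\}\cup\bigcup_{\beta<\alpha}U_\beta$, but the proposition demands a quotient map from $X$ itself. By following Proposition~\ref{PRHAUSND} too literally you take closures when passing to the next neighbourhood $W=X\setminus\ol{\bigcup_{\eta<\beta}U_\eta}$; at limit stages this can discard points of $X$ that lie in $\ol{\bigcup_{\eta<\beta}U_\eta}\setminus\bigcup_{\eta<\beta}U_\eta$ (such points belong to no $U_\eta$ and are excluded from every later $W$), so in general $Y\subsetneq X$. For the applications in this paper your weaker conclusion would actually suffice, since HAD-classes are hereditary, but it does not prove the stated proposition.

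The paper's proof sidesteps this by running the dual construction: instead of accumulating disjoint pieces, it builds a \emph{decreasing} chain of open neighbourhoods $U_\beta$ of $a$ (your $W$'s, but without closures). At a successor step one splits $U_\beta$ into two nonempty relatively clopen parts and lets $U_{\beta+1}$ be the one containing $a$; at a limit step one sets $U_\beta=\bigcap_{\gamma<\beta}U_\gamma$ if this is still open, and stops otherwise. Then $f(x)=\sup\{\beta:x\in U_\beta\}$ is defined on \emph{all} of $X$, the level sets $\Invobr f{\{\beta\}}=U_\beta\setminus U_{\beta+1}$ are open, and $\Invobr f{B_\beta}=U_\beta$, so the quotient $P$ with respect to $f$ satisfies $P\prec\Calp$ and has $\alpha$ non-isolated. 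Your construction becomes correct once you drop the closures and change the stopping rule to ``the intersection fails to be open''---and then it is exactly the paper's argument in complementary notation. Incidentally, the step you flagged as the main obstacle (turning the hypothesis into local disconnectedness at some point $a$) is fine; the actual obstacle is getting the map to live on $X$.
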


\begin{proof}
Since $X$ does not belong to $\CH{\Con}$, there exists $a\in X$ such that no open
neighborhood of $a$ is connected. This means that for any open neighborhood $U$ of $a$ there
exist disjoint open proper subsets $V$, $W$ of $U$ such that $V\cup W=U$. By transfinite
induction we construct a decreasing family $U_\alpha$ of open neighborhoods of $a$. We put
$U_0=X$. For any $\beta$ the neighborhood $U_\beta$ can be divided into two disjoint open
non-empty sets.
Denote by $U_{\beta+1}$ that one which contains $a$. Now suppose that $\beta$ is a limit
ordinal and $U_\gamma$ is already defined for each $\gamma<\beta$. We put
$U_\beta:=\bigcap_{\gamma<\beta} U_\gamma$ if this set is open. If not, we stop the process
and put $\alpha:=\beta$.
(We must stop at some ordinal $\beta$, otherwise there would be a proper class of open sets
in $X$.)

Thus we get a system $(U_\beta)_{\beta<\alpha}$ of open neighborhoods of $a$ with the
following properties: $U_\beta\subsetneqq U_\gamma$ whenever $\beta>\gamma$. For any
limit ordinal $\beta<\alpha$ the equality $U_\beta=\bigcap_{\gamma<\beta} U_\gamma$ holds.
The set $U_\beta\setminus U_{\beta+1}$ is open for any $\beta<\alpha$, but
$\bigcap_{\beta<\alpha} U_\beta$ is not open.

Now we define $\Zobr fX{\alpha\cup\{\alpha\}}$ by
$$f(x)=\sup\{\beta\in\alpha: x\in U_\beta\}.$$
Recall (Definition \ref{DEFCALP}) that a neighborhood base for $\Calp$ at $\alpha$ consists
of the sets $B_\beta=\{\xi\in\alpha\cup\{\alpha\}; \xi\geq\beta\}$ for $\beta<\alpha$. We
have $\Invobr f{B_\beta}=U_\beta$, $\inv f(\beta)=U_\beta\setminus U_{\beta+1}$ for any
$\beta<\alpha$ and $\inv f(\alpha)=\bigcap_{\beta<\alpha} U_\beta$. Thus the quotient space
w.r.t.~the map $f$ is finer than $\Calp$ and the point $\alpha$ is non-isolated in it.
Hence it is a prime $T_2$-space. \qed
\end{proof}

Propositions \ref{PRNOTLOCCON} and \ref{PRHAUSND} imply that, if there exist an epireflective subcategory \Kat A of \Top,
$\Kat A\ne\Ind$, and an HAD-class $\Kat B$ in \Kat A not containing
a prime space, then $\Kat B\subseteq \CH{\Con}$ and $\Kat B$ contains no non-discrete $T_2$-space.

A \tsp $X$ is \emph{totally disconnected} if all components of $X$ are singletons
(\cite[Notes after section 6.2]{ENG}, \cite[Definition 29.1]{WILLARD}). The class of totally disconnected spaces
forms a quotient-reflective subcategory \TD of \Top.
If a totally disconnected space $X$ is a sum of connected spaces, then
$X$ is clearly discrete.

\begin{corollary}\label{CORTDND}
If $X$ is non-discrete and totally disconnected then there exists a quotient map from $X$ to
a prime $T_2$-space.
\end{corollary}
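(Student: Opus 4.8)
The plan is to deduce Corollary \ref{CORTDND} from Proposition \ref{PRNOTLOCCON} by showing that a non-discrete totally disconnected space cannot be a sum of connected spaces. First I would recall that in Proposition \ref{PRNOTLOCCON} we proved: if $X$ is \emph{not} a sum of connected spaces, then there is a quotient map from $X$ onto a prime $T_2$-space (finer than some $\Calp$). So it suffices to verify the hypothesis for a non-discrete totally disconnected $X$, i.e.~to show that such an $X$ does not lie in $\CH{\Con}$.

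Suppose, for contradiction, that $X$ is totally disconnected and $X\in\CH{\Con}$. By the characterization of $\CH{\Con}$ recalled just before Proposition \ref{PRNOTLOCCON} (via \cite[Satz 21.2.6]{HER}), $X$ is then a topological sum of connected spaces, say $X=\coprod_{i\in I} X_i$ with each $X_i$ connected. Each summand $X_i$, being a connected subspace of the totally disconnected space $X$, must be contained in a single component of $X$; but the components of $X$ are singletons, so each $X_i$ is a one-point space. Hence $X$ is a topological sum of singletons, i.e.~$X$ is discrete — contradicting the assumption that $X$ is non-discrete. This is exactly the observation made in the sentence immediately preceding the corollary (``If a totally disconnected space $X$ is a sum of connected spaces, then $X$ is clearly discrete'').

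Therefore a non-discrete totally disconnected $X$ fails to be a sum of connected spaces, and Proposition \ref{PRNOTLOCCON} applies directly, yielding a quotient map from $X$ onto a prime $T_2$-space. There is essentially no obstacle here: the only thing to be careful about is the direction of the implication — we are using that ``totally disconnected $+$ sum of connected $\Rightarrow$ discrete'' in contrapositive form, together with the fact, already established, that $\CH{\Con}$ consists \emph{precisely} of sums of connected spaces (so ``not a sum of connected spaces'' is the same as ``$\notin\CH{\Con}$'', which is the hypothesis literally needed to invoke Proposition \ref{PRNOTLOCCON}). The whole argument is a one-line reduction.

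\begin{proof}
By the sentence preceding the statement, a totally disconnected space which is a sum of connected spaces is discrete; equivalently, since $X$ is non-discrete and totally disconnected, $X$ is not a sum of connected spaces, that is, $X\notin\CH{\Con}$. Now apply Proposition \ref{PRNOTLOCCON}: there exists a quotient map $\Zobr fXP$ with $P$ a prime $T_2$-space (and $P\prec\Calp$).
\qed
\end{proof}
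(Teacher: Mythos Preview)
Your proposal is correct and matches the paper's approach exactly: the corollary is stated immediately after the sentence ``If a totally disconnected space $X$ is a sum of connected spaces, then $X$ is clearly discrete,'' and the intended argument is precisely the contrapositive reduction to Proposition~\ref{PRNOTLOCCON} that you give. There is nothing to add.
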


All zero-dimensional spaces $T_0$-spaces are totally-disconnected, thus the above corollary
applies to the class $\ZDt$ as well. We will see in Proposition \ref{PRZDCALP} that in the
case of zero-dimensional spaces this result can be slightly improved, which leads to the
description of atoms above \Disc in the lattice of coreflective subcategories of the category
\ZDt.

We say that a space $X$ is \emph{locally connected} if for any open neighborhood $U$ of $x$
there is an open neighborhood $V\subseteq U$ of $x$, which is connected (see
\cite[Problem 6.3.3]{ENG} or \cite[Definition 27.7]{WILLARD}). The class of locally connected
spaces is a coreflective subcategory of \Top.

\begin{lemma}\label{LMLOCCON}
Let $X$ be a \tsp. If $X$ is not locally connected then there exists an open \ssp $V$ of $X$
such that $V$ is not a sum of connected spaces.
\end{lemma}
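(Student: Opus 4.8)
The plan is to unpack the definition of local connectedness and locate the point where it fails. If $X$ is not locally connected, then there exists a point $x\in X$ and an open neighborhood $U$ of $x$ such that no connected open neighborhood of $x$ is contained in $U$. I would take $V:=U$ as the candidate open \ssp. The task is then to show that this particular $V$ is not a sum of connected spaces.

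First I would recall the characterization mentioned just before Proposition \ref{PRNOTLOCCON}: a space belongs to $\CH{\Con}$ (i.e., is a sum of connected spaces) \iaoi each of its points has an open connected neighborhood (in that space). So to prove $V\notin\CH{\Con}$ it suffices to exhibit one point of $V$ having no open connected neighborhood in $V$. The natural candidate is $x$ itself. Suppose for contradiction that $W$ is an open neighborhood of $x$ in $V$ which is connected (as a subspace of $V$). Since $V$ is open in $X$, $W$ is also open in $X$, and the subspace topology on $W$ inherited from $V$ agrees with the one inherited from $X$; hence $W$ is a connected open neighborhood of $x$ in $X$. Moreover $W\subseteq V=U$. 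This contradicts the choice of $U$. Therefore $x$ has no connected open neighborhood in $V$, so $V\notin\CH{\Con}$, i.e., $V$ is not a sum of connected spaces.

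The main (and essentially only) subtlety is the bookkeeping about subspace topologies: one must be careful that "connected open neighborhood of $x$ in $V$" really does produce a "connected open neighborhood of $x$ in $X$ contained in $U$", which uses that $V$ is open in $X$ so that openness is transitive and the relative topologies match. Everything else is a direct translation between the definition of local connectedness and the cited description of $\CH{\Con}$. I expect no real obstacle here; the proof is short, and the only care needed is to phrase the failure of local connectedness at a single point with a single bad neighborhood $U$, and then to verify that $U$ itself serves as the desired open subspace $V$.
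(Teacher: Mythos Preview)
Your proposal is correct and follows essentially the same argument as the paper's own proof: pick a point $x$ and an open neighborhood $V$ witnessing the failure of local connectedness, and observe that $x$ has no connected open neighborhood in the subspace $V$, so $V\notin\CH{\Con}$. The only difference is cosmetic naming and that you spell out the transitivity-of-openness detail that the paper leaves implicit.
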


\begin{proof}
If $X$ is not locally connected then there exist a point $x$ and an open neighborhood $V$ of
$x$ such that no open neighborhood $U$ of $x$ with $U\subseteq V$ is connected. So $x$ has no
open connected neighborhood in the \ssp $V$ and $V\notin \CH{\Con}$.
\qed
\end{proof}

\begin{corollary}\label{CORNLC}
Let $\Kat A$ be an epireflective subcategory of \Top with $\At\notin \Kat A$. If $\Kat B$ is
an HAD-class in \Kat A and \Kat B contains at least one space which is not locally-connected,
then \Kat B is closed under prime factors.
\end{corollary}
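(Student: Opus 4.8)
The plan is to combine Lemma \ref{LMLOCCON} with the machinery already developed for HAD-classes, in particular the fact that HAD-classes are hereditary and the characterization of when they contain a prime space. First I would take a space $X\in\Kat B$ which is not locally connected. By Lemma \ref{LMLOCCON} there exists an open \ssp $V$ of $X$ which is not a sum of connected spaces, i.e.\ $V\notin\CH{\Con}$. Since $\Kat B$ is an HAD-class, it is hereditary, so $V\in\Kat B$. Now applying Proposition \ref{PRNOTLOCCON} to $V$, there is a quotient map $\Zobr fV P$ onto a prime $T_2$-space $P$ with $P\prec\Calp$ for some infinite cardinal $\alpha$. Since $\Kat B$ is divisible in \Kat A and $P$ is $T_2$ (hence zero-dimensional, hence in every epireflective subcategory $\Kat A$ with $\At\notin\Kat A$, so $P\in\Kat A$), we conclude $P\in\Kat B$.

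Having produced a prime space $P\in\Kat B$, the conclusion is immediate from Corollary \ref{CORP2} (or Corollary \ref{CORP}, since $P$ is a non-discrete zero-dimensional space): an HAD-class containing a prime space is closed under the formation of prime factors. That finishes the proof.

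The only subtlety worth flagging is the membership $P\in\Kat A$, which is needed before one can invoke divisibility of $\Kat B$ in $\Kat A$: divisibility only guarantees that quotients of spaces in $\Kat B$ which land in $\Kat A$ belong to $\Kat B$. But this is handled exactly as in the discussion following Corollary \ref{CORP2} and in the proof of Corollary \ref{CORP}: every prime $T_2$-space is zero-dimensional, and $\ZDt$ is the smallest epireflective subcategory of \Top with $\At\notin\Kat A$, so $P\in\ZDt\subseteq\Kat A$. So there is really no obstacle here; the statement is a short corollary assembled from Lemma \ref{LMLOCCON}, Proposition \ref{PRNOTLOCCON}, and Corollary \ref{CORP2}, with the hereditariness of HAD-classes used to pass from $X$ to the open subspace $V$. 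One may also note in passing that the same argument, using Corollary \ref{CORTDND} in place of Proposition \ref{PRNOTLOCCON}, shows that an HAD-class containing a non-discrete totally disconnected space is closed under prime factors, which is the companion statement mentioned in the section introduction.
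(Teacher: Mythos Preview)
Your proof is correct and matches the paper's intended argument: the paper states Corollary \ref{CORNLC} without proof, and the chain you assemble (Lemma \ref{LMLOCCON} to pass to an open subspace $V\notin\CH{\Con}$, hereditariness of $\Kat B$ to get $V\in\Kat B$, Proposition \ref{PRNOTLOCCON} to obtain a prime $T_2$-quotient $P$, the observation $P\in\ZDt\subseteq\Kat A$ so that divisibility gives $P\in\Kat B$, and finally Corollary \ref{CORP2}) is exactly the sequence of results the corollary is positioned to summarize.
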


\subsection{Lattices of coreflective subcategories}

The rest of this section is devoted to showing some new facts concerning the (large) lattice
of all coreflective subcategories of $\Top$ and of $\ZD$, which follow from the results above or can
be shown using similar methods.

\begin{definition}\label{DEFBALP}
Let $\alpha$ be a regular cardinal. Then $\Balp$ is the
topological space on the set $\alpha\cup\{\alpha\}$ whose open
sets are precisely the sets $B_\beta=\{\xi\in\alpha\cup\{\alpha\};
\xi\geq\beta\}$ for $\beta<\alpha$. We will denote the
coreflective hull of $\Balp$ in $\Top$ by $\Kat B_\alpha$.
\end{definition}

Many interesting facts about the lattice of all coreflective
subcategories of $\Top$ can be found in \cite[\S22]{HER} and
\cite{HERLIM}. It is shown that the atoms of this lattice above $\FG$
are precisely the subcategories $\Kat B_\alpha$. It is also shown that
$\Kat B_\alpha \subseteq \CHb{\Calp}$ and $\CHb{\Calp}\cap
\CHb{\Cbet}=\FG$ for any regular cardinals $\alpha\ne\beta$.

Next we show that the minimal elements of the lattice of all coreflective subcategories
of \Top such that $\Kat C\not\subseteq\CH{\Con}$ are precisely the subcategories
$\CHb{\Calp}$. (Note that the spaces $\Balp$ are connected whereas
$\Calp\notin\CH\Con$.)

\begin{proposition}
If \Kat C is a subcategory of \Top with $\Kat C \not\subseteq \CH{\Con}$, then there exists a
regular cardinal $\alpha$ such that $\CHb{\Calp}\subseteq\Kat C$.
\end{proposition}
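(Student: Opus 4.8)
The plan is to reduce the statement to a single application of Proposition~\ref{PRNOTLOCCON}, using only that $\CH{\Con}$ is coreflective in \Top and that its members are exactly the sums of connected spaces. First I would observe that the hypothesis $\Kat C\not\subseteq\CH{\Con}$ means there is a space $X\in\Kat C$ which is not a sum of connected spaces. By Proposition~\ref{PRNOTLOCCON} there is a quotient map $\Zobr fXP$ where $P$ is a prime $T_2$-space with $P\prec\Calp$ for some cardinal $\alpha$; moreover, from the construction in that proof, $\alpha$ may be taken regular (or, if it is not, one passes to its cofinality via a further quotient, exactly as in Lemma~\ref{LMPRECCBET}, so that we may assume $\alpha$ regular and $P\prec\Calp$).

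Next I would use that \Kat C, being a coreflective subcategory of \Top, is closed under quotients, so $P\in\Kat C$. Since $P$ is a prime space with accumulation point $\alpha$ and $P\prec\Calp$, Lemma~\ref{LMPRECCALP} gives $\Calp\in\CHb P$. But $\CHb P$ is the smallest coreflective subcategory of \Top containing $P$, and $P\in\Kat C$ with \Kat C coreflective, so $\CHb P\subseteq\Kat C$; hence $\Calp\in\Kat C$ and, taking coreflective hulls again, $\CHb{\Calp}\subseteq\Kat C$. This completes the argument.

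The only step requiring a little care is extracting the regularity of $\alpha$ and the relation $P\prec\Calp$ from Proposition~\ref{PRNOTLOCCON}; the cleanest route is: if the $\alpha$ produced there is not regular, apply the quotient map $\Zobr q{\Calp}{\Cbet}$ (with $\beta=\cof\alpha$) that collapses only the top point, obtaining a prime space $P'=q[P]\prec\Cbet$ which is a quotient of $P$, hence lies in \Kat C, and then invoke Lemma~\ref{LMPRECCALP} for the regular cardinal $\beta$. I expect no genuine obstacle here — the work has essentially already been done in Propositions~\ref{PRNOTLOCCON}, Lemmas~\ref{LMPRECCALP} and~\ref{LMPRECCBET}; the proposition is really just the coreflective-subcategory packaging of those facts, analogous to how $\Kat B_\alpha\subseteq\CHb{\Calp}$ identifies the atoms above \FG.
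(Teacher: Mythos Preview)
Your proposal is correct and follows essentially the same route as the paper: pick $X\in\Kat C\setminus\CH{\Con}$, apply Proposition~\ref{PRNOTLOCCON} to obtain a prime $T_2$-space $P\prec\Calp$ as a quotient of $X$ (hence $P\in\Kat C$ by coreflectivity), and then invoke Lemma~\ref{LMPRECCBET} to get $\Cbet\in\CHb P\subseteq\Kat C$ for a regular $\beta$. The paper's proof is just the one-line version of this, citing Proposition~\ref{PRNOTLOCCON} and Lemma~\ref{LMPRECCBET} directly; your extra discussion of regularity is simply unpacking what Lemma~\ref{LMPRECCBET} already handles.
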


\begin{proof}
If we have $X\in\Kat C$, where $\Kat C$ is coreflective and $X\notin\CH{\Con}$, then by
Proposition \ref{PRNOTLOCCON} and Lemma \ref{LMPRECCBET} we get $\Calp\in\Kat C$ for some
cardinal $\alpha$.
\qed
\end{proof}

The subcategory $\Kat B_\alpha=\CHb{\Balp}$ is the smallest coreflective subcategory of \Top
such that in each space $X\in\Kat B_\alpha$ any intersection of less than $\alpha$ open sets
is open, and there exists a space $Y\in\Kat B_\alpha$ and a system of $\alpha$ open sets in
$Y$ with a non-open intersection.

We show that if we have a zero-dimensional space with similar properties then we can obtain a
prime $T_2$-space from it. Thus the atoms in the lattice of coreflective subcategories of $\ZD$
above the class $\FG\cap\ZD$
have a similar description. The proof of the following proposition is similar to the proof of
\cite[Proposition 4.4]{SLEZIAK1}.

\begin{proposition}\label{PRZDCALP}
Let $X$ be a zero-dimensional space and $\alpha$ be the smallest cardinal number such that
there exists a system $U_\beta$, $\beta<\alpha$, of open subsets of $X$ with non-open
intersection $\bigcap_{\beta<\alpha} U_\beta$, but every intersection of
less than $\alpha$ open subsets of $X$ is open. Then there exists a prime space $Y\prec\Calp$
and a quotient map $\Zobr qXY$.
\end{proposition}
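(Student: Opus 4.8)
The plan is to mimic the construction used in Proposition \ref{PRNOTLOCCON}, replacing the bisection of connected neighborhoods by a bisection into clopen pieces, which is exactly what zero-dimensionality provides. First I would fix a point $a$ at which the failure of the intersection property is witnessed; more precisely, since $\bigcap_{\beta<\alpha} U_\beta$ is not open, there is a point $a$ in this intersection having no clopen (equivalently, no open) neighborhood contained in $\bigcap_{\beta<\alpha} U_\beta$. I then build by transfinite induction a strictly decreasing chain $(W_\beta)_{\beta\le\alpha}$ of clopen neighborhoods of $a$: start with $W_0=X$; at successor steps, use that $W_\beta\cap U_\beta$ is clopen and strictly smaller than $W_\beta$ (if it were not strictly smaller we could discard that $U_\beta$; if the process never produced a strict decrease we would contradict the minimality of $\alpha$ or the non-openness of the intersection) and set $W_{\beta+1}$ to be a clopen neighborhood of $a$ strictly inside $W_\beta\cap U_\beta$; at limit steps $\gamma<\alpha$ put $W_\gamma=\bigcap_{\beta<\gamma}W_\beta$, which is open by minimality of $\alpha$ and closed as an intersection of closed sets, hence clopen. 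The process must terminate at some ordinal, which by the choice of $a$ and the minimality of $\alpha$ is exactly $\alpha$, and $\bigcap_{\beta<\alpha}W_\beta$ fails to be open.

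Next I would define $\Zobr qX{\alpha\cup\{\alpha\}}$ by $q(x)=\sup\{\beta<\alpha: x\in W_\beta\}$, exactly as in the proof of Proposition \ref{PRNOTLOCCON}. One computes $\Invobr q{B_\beta}=W_\beta$, $\inv q(\beta)=W_\beta\setminus W_{\beta+1}$ (clopen, hence $q$ is continuous and each $\beta<\alpha$ is isolated in the quotient), and $\inv q(\alpha)=\bigcap_{\beta<\alpha}W_\beta$, which is not open, so $\alpha$ is non-isolated in the quotient space $Y$. Since the preimages of the basic neighborhoods $B_\beta$ of $\alpha$ are open, we get $Y\prec\Calp$, and since $\alpha$ is the only non-isolated point, $Y$ is a prime space. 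By construction $q$ is a quotient map, completing the argument.

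The one place demanding care — the \emph{main obstacle} — is the successor step of the induction: one must argue that at each stage a \emph{strict} decrease is possible, i.e.\ that we can always find a clopen neighborhood of $a$ properly contained in the previous one. The right way to see this is to note that if at some stage $\beta<\alpha$ no proper clopen shrinking of $W_\beta$ inside $W_\beta\cap U_\beta$ existed, then $W_\beta$ would itself be a clopen neighborhood of $a$ contained in $U_\beta$ and minimal, and iterating this observation together with the clopenness of limits would exhibit $\bigcap_{\gamma<\alpha}U_\gamma$ as containing the open set $W_\beta\cap\bigcap_{\gamma\ge\beta}U_\gamma$; a short bookkeeping argument using the minimality of $\alpha$ (intersections of fewer than $\alpha$ clopen sets are open) then shows the whole intersection would be open, a contradiction. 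Everything else is the routine verification already carried out in Proposition \ref{PRNOTLOCCON}, now with ``connected'' replaced by ``clopen'', which is why the proposition can legitimately be said to follow the pattern of \cite[Proposition 4.4]{SLEZIAK1}.
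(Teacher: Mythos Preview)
Your approach is essentially the paper's: build a strictly decreasing, limit-continuous chain of clopen neighborhoods of a witness point $a\in\bigcap_\beta U_\beta\setminus\Int(\bigcap_\beta U_\beta)$, then collapse via $q(x)=\sup\{\beta:x\in W_\beta\}$. The definition of $q$ and the verification that the quotient is a prime space finer than $\Calp$ are correct and match the paper exactly.

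Two points in the inductive construction need repair. First, $W_\beta\cap U_\beta$ is only open, not clopen, since the given $U_\beta$ are merely open; you must invoke zero-dimensionality to pass to a clopen neighborhood inside it (as the paper does explicitly). Second, and more seriously, your ``main obstacle'' argument does not work as written: the set $W_\beta\cap\bigcap_{\gamma\ge\beta}U_\gamma$ is an intersection of $\alpha$ (not fewer than $\alpha$) open sets, so minimality of $\alpha$ says nothing about its openness, and your contradiction collapses. The clean fix is direct rather than by contradiction: since $a\notin\Int(\bigcap_\gamma U_\gamma)$, the clopen set $W_\beta$ is not contained in $\bigcap_\gamma U_\gamma$, so there is some $\gamma_\beta$ with $W_\beta\not\subseteq U_{\gamma_\beta}$; now choose $W_{\beta+1}$ clopen with $a\in W_{\beta+1}\subseteq W_\beta\cap U_\beta\cap U_{\gamma_\beta}$. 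The factor $U_{\gamma_\beta}$ forces $W_{\beta+1}\subsetneq W_\beta$, while the factor $U_\beta$ forces $\bigcap_\beta W_\beta\subseteq\bigcap_\beta U_\beta$, so the final intersection is non-open as required. (Alternatively, and closer to the paper's terse ``w.l.o.g.'', one may allow non-strict steps and then observe that minimality of $\alpha$ forces exactly $\alpha$ distinct values in the chain, then reindex; this is what your parenthetical ``discard that $U_\beta$'' is gesturing at.)
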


\begin{proof}
Denote by $\{U_\beta; \beta<\alpha\}$ the system of $\alpha$ open sets in $X$ whose
intersection is not open. We can assume w.l.o.g.~that this system is strictly decreasing and
all sets $U_\beta$ are clopen. (From an arbitrary decreasing system of open sets we obtain a
system of clopen sets by choosing any point $a\in \bigcap_{\gamma<\alpha} U_\gamma \setminus \Int(\bigcap_{\gamma<\alpha} U_\gamma)$
and choosing a basic neighborhood $U'_\beta$ with $a\in U'_\beta\subseteq U_\beta$ for each $\beta<\alpha$.)
If necessary, we can modify this system in such a way that
$U_0=X$ and $U_\beta=\bigcap_{\gamma<\beta} U_\gamma$ for any limit ordinal $\beta<\alpha$.

Define $\Zobr fX{\alpha\cup\{\alpha\}}$ by
$$f(x)=\sup\{\beta\in\alpha: x\in U_\beta\}.$$
Let $Y$ be the quotient space with respect to $f$.

The equality $\Invobr f{B_\beta}=U_\beta$ holds for any $\beta<\alpha$. Since each $U_\beta$
is clopen, we see that $B_\beta$ and its complement are open in the quotient topology.

The set $\inv f(\alpha)=\bigcap_{\beta<\alpha} U_\beta$ is not open, therefore $\{\alpha\}$
is not open. Since the sets $U_\beta$ are clopen, all sets $\{\beta\}=\Invobr
f{U_{\beta}\setminus U_{\beta+1}}$ are open in $Y$. Thus $Y$ is indeed a prime space and
$Y\prec\Calp$.
\qed
\end{proof}

\begin{theorem}
Let $\Kat A=\ZD$ or $\Kat A=\ZDt$. Let $\Kat C$ be a coreflective
subcategory $($an AD-class$)$ in \Kat A such that $\Kat
C\not\subseteq\FG\cap\Kat A$ and $\alpha$ be the smallest cardinal
such that there exists a space $X\in \Kat C$ and a system
$U_\beta$, $\beta<\alpha$, of open sets in $X$ whose intersection
$\bigcap_{\beta<\alpha} U_\beta$ is not open. Then there exists a
regular cardinal $\alpha$ such that $\CHbA {A}{\Calp}\subseteq\Kat
C$ $($\resp $\ADAb {A}{\Calp}\subseteq\Kat C$$)$.
\end{theorem}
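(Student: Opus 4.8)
The plan is to extract from the hypothesis a single space of $\Kat C$ that meets the hypotheses of Proposition \ref{PRZDCALP} verbatim, and then to transport the prime space it delivers back into $\Kat C$ and finally into the coreflective hull (\resp AD-hull) of a suitable $\Calp$.

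First I would note that the cardinal $\alpha$ in the statement is well defined: a space is finitely generated exactly when every intersection of its open sets is open, so $\Kat C\not\subseteq\FG\cap\Kat A$ (equivalently $\Kat C\not\subseteq\FG$) provides some $X\in\Kat C$ together with a family $(U_\beta)_{\beta<\alpha}$ of open sets whose intersection is not open, and among all such cardinals we take the least one. For this $X$, minimality of $\alpha$ (applied to $X$ in particular) forces every intersection of fewer than $\alpha$ open subsets of $X$ to be open; hence the least cardinal attached to $X$ alone is again $\alpha$, and, since $X\in\Kat A\subseteq\ZD$ is zero-dimensional, $X$ satisfies precisely the hypotheses of Proposition \ref{PRZDCALP}. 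I would also record that $\alpha$ is automatically regular: were it singular, one could write $\bigcap_{\beta<\alpha}U_\beta$ as an intersection of $\cof{\alpha}<\alpha$ sets, each of which is itself an intersection of $<\alpha$ open sets and thus open by minimality, so that $\bigcap_{\beta<\alpha}U_\beta$ would be open, a contradiction.

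Proposition \ref{PRZDCALP} now yields a prime space $Y\prec\Calp$ together with a quotient map $\Zobr qXY$. Since $Y\prec\Calp$ is prime and $T_1$, it is $T_2$ and hence zero-dimensional, so $Y\in\ZDt\subseteq\Kat A$; as $q$ is a $\Top$-quotient with $X\in\Kat C$ and codomain in $\Kat A$, divisibility of $\Kat C$ in $\Kat A$ gives $Y\in\Kat C$ (for an AD-class this is the definition, and for a coreflective subcategory of $\Kat A$ it suffices to observe that a $\Top$-quotient map between objects of $\Kat A$ is an $\Kat A$-extremal epimorphism). Because $\alpha$ is regular, Lemma \ref{LMPRECCALP} applies and $\Calp\in\CHb Y$; that is, $\Calp$ is a $\Top$-quotient of a topological sum of copies of $Y$. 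This sum lies in $\Kat C$ by additivity, and $\Calp$ (for regular $\alpha$) is zero-dimensional and $T_2$, hence lies in $\Kat A$, so divisibility once more gives $\Calp\in\Kat C$. Finally $\CHbA A{\Calp}$ (\resp $\ADAb A{\Calp}$) is by definition the smallest coreflective subcategory of $\Kat A$ (\resp AD-class in $\Kat A$) containing $\Calp$, whence $\CHbA A{\Calp}\subseteq\Kat C$ (\resp $\ADAb A{\Calp}\subseteq\Kat C$).

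The bulk of this is bookkeeping: checking that $Y$ and $\Calp$ belong to $\Kat A$, and that $\Kat C$ is additive and divisible in $\Kat A$ in whichever of the two readings is in force. The only step requiring genuine care is the minimality argument — ensuring that the least cardinal attached to the chosen $X$ coincides with the global $\alpha$, and that $\alpha$ is regular — since this is exactly what licenses the simultaneous use of Proposition \ref{PRZDCALP} and Lemma \ref{LMPRECCALP} with one and the same $\alpha$. If one preferred to sidestep the regularity of $\alpha$, the same conclusion would follow by invoking Lemma \ref{LMPRECCBET} in place of Lemma \ref{LMPRECCALP}, obtaining some regular $\beta$ with $\Cbet\in\CHb Y$ and declaring that $\beta$ to be the cardinal in the statement.
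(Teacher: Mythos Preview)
Your proof is correct and is exactly the argument the paper has in mind: the theorem is stated immediately after Proposition~\ref{PRZDCALP} without an explicit proof, so the intended derivation is precisely to apply that proposition to a witness space $X$ and then pass from the resulting prime $Y\prec\Calp$ to $\Calp$ via Lemma~\ref{LMPRECCALP} (or Lemma~\ref{LMPRECCBET}). Your handling of the bookkeeping---that the global minimal $\alpha$ coincides with the one for the witness $X$, that $\alpha$ is regular, and that $Y$ and $\Calp$ lie in $\Kat A$---fills in exactly what the paper leaves to the reader.
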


\section{Further applications}

In this section we study some other questions which are related to HAD-classes.

\subsection{\HADhulls and hereditary coreflective hulls}\label{SECTCHA}

The aim of this part is to show that if the coreflective hull of $\Kat D$ in $\Kat A$ is
hereditary, it is at the same time the \HADhull of $\Kat D$ in $\Kat A$.

Recall that the coreflective hull $\CHAA AD$ of $\Kat D$ in $\Kat A$ can be formed by taking all
\Kat A-extremal quotients of topological sums of spaces from $\Kat D$.

\begin{lemma}\label{LMPRIMFARESAME}
Let $\Kat A$ be an epireflective subcategory of $\Top$ with $\At\notin\Kat A$ and $\Kat
D\subseteq\Kat A$. Then the prime $T_2$-spaces contained in $\CHAA AD$ and the prime
$T_2$-spaces contained in $\CH D$ are the  same. \Ie, $\{P\in\CHAA AD, P$ is a prime
$T_2$-space$\}=\{P\in\CH D, P$ is a prime $T_2$-space$\}$.

In the  case $\Kat A=\Top_0$ we moreover get $\{P\in\CHAA AD, P$ is prime$\}=\{P\in\CH D, P$
is prime$\}$.
\end{lemma}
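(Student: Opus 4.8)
The plan is to prove the two inclusions separately. The inclusion ``$\supseteq$'' I expect to be immediate: if $P$ is a prime $T_2$-space lying in $\CH D$, then $P$ is zero-dimensional and $T_0$, so $P\in\ZDt$; since $\At\notin\Kat A$ forces $\ZDt\subseteq\Kat A$, we get $P\in\CH D\cap\Kat A=\ADA AD$ by Lemma~\ref{LMADAB}, and $\ADA AD\subseteq\CHAA AD$ because $\CHAA AD$ is a coreflective subcategory of $\Kat A$ and every such subcategory is an AD-class in $\Kat A$. When $\Kat A=\Top_0$ the same reasoning applies to an arbitrary prime space $P\in\CH D$, since every prime space is $T_0$ and hence lies in $\Top_0=\Kat A$. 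So the real content is the opposite inclusion.

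For ``$\subseteq$'' I would take a prime $T_2$-space (\resp a prime space, if $\Kat A=\Top_0$) $P$ with accumulation point $a$ and $P\in\CHAA AD$, and use the description of the coreflective hull to obtain a topological sum $X=\coprod_i D_i$ with all $D_i\in\Kat D$ and an $\Kat A$-extremal epimorphism $\Zobr fXP$; note $X\in\Kat A$ (closedness under sums, since $\At\notin\Kat A$) and $X\in\CH D$. The idea is to show that $f$ is in fact a quotient map in $\Top$, which gives $P\in\CH D$ at once. First one checks $f$ is onto: factoring $f$ through the inclusion of its image (a subspace of $P$, hence an object of $\Kat A$) produces a monomorphism through which $f$ factors, and extremality forces it to be an isomorphism. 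Next, factor $f$ in $\Top$ as $X\xrightarrow{q}Q\xrightarrow{\bar f}P$, where $q$ is the quotient identifying points with equal $f$-value; then $Q\in\CH D$ (a quotient of a sum of spaces from $\Kat D$) and $\bar f$ is a continuous bijection. Factoring $\bar f$ through the $\Kat A$-reflection $\Zobr{r}{Q}{RQ}$ of $Q$ as $\bar f=h\circ r$, and using that $r$ is injective ($\bar f$ is) and surjective (reflection arrows are epimorphisms of $\Top$), one gets that both $r$ and $h$ are continuous bijections; identifying underlying sets along them gives $Q\prec P$ and $RQ=P$. Since $f=h\circ(r\circ q)$ with $h$ a monomorphism of $\Kat A$ and $f$ an extremal epimorphism, $h$ is an isomorphism, so $RQ=P$ already as topological spaces; everything therefore reduces to showing $Q\in\Kat A$, for then $Q=RQ=P$ and $P=Q\in\CH D$.

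Showing $Q\in\Kat A$ is where primeness enters. Since $Q\prec P$, every point of $P$ other than $a$ is isolated in $Q$; if $a$ were isolated in $Q$ then $Q$ would be discrete, hence lie in $\Kat A$, forcing $RQ=Q$ to be discrete and contradicting that $RQ=P$ is a prime space. So $a$ is not isolated in $Q$, \ie $Q$ is a prime space; in the main case $P$ is $T_2$, so the finer space $Q$ is a prime $T_2$-space, hence zero-dimensional and $T_0$, so $Q\in\ZDt\subseteq\Kat A$; when $\Kat A=\Top_0$ one instead uses that every prime space is $T_0$, so $Q\in\Top_0=\Kat A$. Either way $Q\in\Kat A$, which finishes the argument.

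The main obstacle is conceptual rather than computational: in general $\CHAA AD$ is strictly larger than $\CH D\cap\Kat A$ (Example~\ref{EXAKR}), the gap coming exactly from replacing a quotient by its $\Kat A$-reflection, so the whole point is to see that this replacement never creates a new prime $T_2$-space — which is so because a prime $T_2$-space equals its own $\Kat A$-reflection, already sitting in $\ZDt\subseteq\Kat A$. A secondary thing to be careful about is to invoke ``$\Kat A$-extremal epimorphism'' only through surjectivity (via the image factorization) and through the impossibility of a nontrivial factorization through a monomorphism, and not through any claim about arbitrary epimorphisms of $\Kat A$, which need not be surjective when $\At\notin\Kat A$.
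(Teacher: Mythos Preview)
Your proof is correct and follows essentially the same strategy as the paper: factor the $\Kat A$-extremal epimorphism as a quotient followed by an injective continuous map, observe that the intermediate space is discrete or a prime $T_2$-space and hence lies in $\ZDt\subseteq\Kat A$, and invoke extremality to conclude that the injective factor is an isomorphism. The paper's version is more direct --- it skips both the separate surjectivity argument and the detour through the $\Kat A$-reflection of $Q$, simply noting that either alternative (discrete or prime $T_2$) already puts the intermediate space in $\ZDt$, so the injective factor is an $\Kat A$-monomorphism which extremality forces to be an isomorphism; for $\Kat A=\Top_0$ the paper just quotes the equality $\CHAA AD=\CH D\cap\Kat A$ (valid because $\Top_0$ is quotient-reflective).
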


\begin{proof}
Let $P$ be a prime $T_2$-space belonging to $\CHAA AD$. There is an \Kat A-extremal
epimorphism $\Zobr eAP$, where $A$ is a sum of spaces from $\Kat D$, which can be factorized
as $m\circ q$ with $q$ a quotient map and $m$ an injective continuous map.
$$\TriangCD AqXemP$$

Since $\Zobr mXP$ is an injective map, $X$ is either discrete or a prime $T_2$-space. Thus
$X\in\ZDt \subseteq\Kat A$, and $m$ is an $\Kat A$-monomorphism. Since $e$ is $\Kat
A$-extremal epimorphism, we obtain that $m$ is an isomorphism and $P\in\CH D$.

Any prime space is $T_0$. Therefore the second part is clear from
the equality $\CHAA AD=\CH D\cap\Kat A$, which holds for $\Kat
A=\Top_0$.
\qed
\end{proof}

\begin{theorem}\label{THMCHDCAPA}
Let \Kat A be an epireflective subcategory of \Top with $\At\notin\Kat A$ and $\Kat
D\subseteq\Kat A$. If $\CHAA AD$ is
hereditary then $\CHAA AD=\CH D \cap \Kat A$.
\end{theorem}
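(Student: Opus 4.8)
The plan is to prove the two inclusions $\CHAA AD\subseteq\CH D\cap\Kat A$ and $\CH D\cap\Kat A\subseteq\CHAA AD$ separately, observing that only the first of them really uses the assumption that $\CHAA AD$ is hereditary; the second holds for every $\Kat D\subseteq\Kat A$.

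For the inclusion $\CH D\cap\Kat A\subseteq\CHAA AD$ I would argue as follows. Take $Y\in\CH D\cap\Kat A$; since $\CH D$ is formed by $\Top$-quotients of topological sums of spaces from $\Kat D$, there is such a sum $S$ and a quotient map \Zobr qSY in \Top. Because $\At\notin\Kat A$, the sum $S$ lies in $\Kat A$, and so does $Y$. The only point needing a line of argument is that a quotient map of \Top between two objects of $\Kat A$ is automatically an $\Kat A$-extremal epimorphism: if it factors as $q=m\circ g$ with $m$ a monomorphism (injective continuous map) of $\Kat A$, then $m$ is a continuous bijection onto $Y$, and since $Y$ already carries the finest topology making $q$ continuous, $m$ must be a homeomorphism. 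Hence $Y$ is an $\Kat A$-extremal quotient of a topological sum of spaces from $\Kat D$, i.e.\ $Y\in\CHAA AD$.

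For the inclusion $\CHAA AD\subseteq\CH D\cap\Kat A$ I would use a reduction to prime factors. As $\CHAA AD\subseteq\Kat A$ trivially, it suffices to show $\CHAA AD\subseteq\CH D$. Fix $X\in\CHAA AD$. Since $X$ is a $\Top$-quotient of the topological sum of its prime factors $X_a$, and $\CH D$ is closed under topological sums and quotients, it is enough to prove $X_a\in\CH D$ for every $a\in X$. Now $\CHAA AD$ is a hereditary coreflective subcategory of $\Kat A$, hence by the characterization of heredity recalled above (from \cite{CINCHER2}) it is closed under the formation of prime factors, so $X_a\in\CHAA AD$. Each $X_a$ is discrete or prime. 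If $X_a$ is discrete, then $X_a\in\CH D$ because $\Disc$ is the smallest coreflective subcategory of \Top. If $X_a$ is a prime $T_2$-space, then Lemma \ref{LMPRIMFARESAME} identifies the prime $T_2$-spaces of $\CHAA AD$ with those of $\CH D$, so $X_a\in\CH D$. Finally, if $X_a$ is a prime space that is not $T_2$, then it contains a subspace homeomorphic to the \Sierp space $S$; since $\Kat A$ is closed under subspaces this forces $S\in\Kat A$ and hence $\Kat A=\Top_0$, and then the second part of Lemma \ref{LMPRIMFARESAME} (for $\Kat A=\Top_0$) again gives $X_a\in\CH D$. Combining the cases, $X\in\CH D$, which completes the inclusion and the proof.

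I do not expect a genuine obstacle here. The heart of the argument is the interplay between heredity of $\CHAA AD$ — which, through closedness under prime factors, pulls the prime factors of an arbitrary member $X$ of $\CHAA AD$ back into $\CHAA AD$ — and Lemma \ref{LMPRIMFARESAME}, which transfers those prime factors into $\CH D$. The only slightly delicate (and purely categorical) point is the verification that $\Top$-quotient maps between objects of $\Kat A$ are $\Kat A$-extremal epimorphisms, which is needed for the easy inclusion; once that is in hand, both directions are short.
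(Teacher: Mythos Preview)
Your proof is correct and follows essentially the same route as the paper: both directions are handled as you describe, with heredity giving closedness under prime factors (via \cite[Theorem 1]{CINCHER2}) and Lemma~\ref{LMPRIMFARESAME} transferring the prime factors into $\CH D$. The only cosmetic differences are that the paper states the easy inclusion $\CH D\cap\Kat A\subseteq\CHAA AD$ without proof, and it disposes of the case $\Kat A=\Top_0$ in one line by noting that then $\CHAA AD=\CH D\cap\Kat A$ holds outright (since $\Top_0$ is quotient-reflective), rather than invoking the second clause of Lemma~\ref{LMPRIMFARESAME} as you do.
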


\begin{proof}
The inclusion $\CH D\cap\Kat A\subseteq\CHAA AD$ holds for any $\Kat D\subseteq \Kat A$. We
show the opposite inclusion.

If \CHAA AD is a hereditary coreflective subcategory of \Kat A, then it is closed under the
formation of prime factors (see \cite[Theorem 1]{CINCHER2}). Let $Y\in \CHAA AD$. If $\Kat
A\subseteq\Top_1$ then any prime factor $Y_a$ of $Y$ is $T_2$. All of them belong to $\CHAA
AD$. According to Lemma \ref{LMPRIMFARESAME} prime $T_2$-spaces in $\CHAA AD$ and $\CH D$ are
the same. As $Y$ is a quotient of spaces $Y_a$ belonging to $\CH D$, we get $Y\in\CH D$.

In the case $\Kat A=\Top_0$ the equality $\CHAA AD=\CH D\cap \Kat A$ holds for any $\Kat D\subseteq\Kat A$.
\qed
\end{proof}

\begin{corollary}\label{CORCADISHAD}
Let \Kat A be an epireflective subcategory of \Top with $\At\notin\Kat A$ and $\Kat
D\subseteq\Kat A$. If $\CHAA AD$ is hereditary then $\CHAA AD=\ADA AD=\HADA AD$.
\end{corollary}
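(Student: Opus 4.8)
The plan is to derive everything from Theorem \ref{THMCHDCAPA} together with the two facts about coreflective hulls recorded in the preliminaries, namely $\CHAA AD=\CH D\cap\Kat A$ being the smallest coreflective subcategory of \Kat A containing \Kat D (formed as \Kat A-extremal quotients of sums) and $\ADA AD$ being the smallest AD-class in \Kat A containing \Kat D (formed as all quotients of sums of spaces from \Kat D that happen to lie in \Kat A). Since every \Kat A-extremal quotient is in particular a quotient landing in \Kat A, we always have $\CHAA AD\subseteq\ADA AD$; and conversely $\ADA AD\subseteq\HADA AD$ trivially, while $\HADA AD$ is by definition the smallest hereditary AD-class in \Kat A containing \Kat D.

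First I would invoke the hypothesis: $\CHAA AD$ is hereditary. Being a coreflective subcategory of \Kat A that is hereditary, it is in particular an AD-class in \Kat A (coreflective subcategories are always additive and divisible in \Kat A) which is hereditary, hence it is a hereditary AD-class in \Kat A containing \Kat D. By minimality of the \HADhull we get $\HADA AD\subseteq\CHAA AD$. Combining this with the chain $\CHAA AD\subseteq\ADA AD\subseteq\HADA AD$ noted above forces all three classes to coincide: $\CHAA AD=\ADA AD=\HADA AD$.

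Alternatively, and perhaps more in the spirit of the section, one can route through Theorem \ref{THMCHDCAPA} directly: that theorem already gives $\CHAA AD=\CH D\cap\Kat A$, and by Lemma \ref{LMADAB} we have $\ADA AD=\CH D\cap\Kat A$ as well, so $\CHAA AD=\ADA AD$; heredity of this common class together with the sandwich $\ADA AD\subseteq\HADA AD\subseteq$ (smallest hereditary AD-class) $\subseteq\CHAA AD$ closes the loop. Either way the argument is purely formal once Theorem \ref{THMCHDCAPA} is in hand.

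There is essentially no obstacle here; the only point requiring a moment's care is to make sure the inclusions between the three hulls are all genuinely available without circularity — specifically that $\CHAA AD\subseteq\ADA AD$ uses only that coreflective subcategories of \Kat A are AD-classes in \Kat A (true by the characterization of coreflectivity recalled in the preliminaries), and that the reverse containment $\HADA AD\subseteq\CHAA AD$ uses only the heredity hypothesis plus minimality of the \HADhull. Given that, the corollary is immediate.
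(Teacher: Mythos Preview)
Your second argument is correct and is exactly what the paper intends: Theorem~\ref{THMCHDCAPA} gives $\CHAA AD=\CH D\cap\Kat A$, Lemma~\ref{LMADAB} gives $\ADA AD=\CH D\cap\Kat A$, hence $\CHAA AD=\ADA AD$; this class is hereditary by hypothesis and is an AD-class, so $\HADA AD\subseteq\CHAA AD=\ADA AD\subseteq\HADA AD$ closes the loop.

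Your first argument, however, contains a genuine error. You claim that ``every \Kat A-extremal quotient is in particular a quotient landing in \Kat A'' and hence $\CHAA AD\subseteq\ADA AD$ always. This is backwards. Quotient maps with codomain in $\Kat A$ are always $\Kat A$-extremal epimorphisms, but not conversely; Example~\ref{EXAKR} in the paper exhibits exactly this failure (the coreflective hull of compact spaces in $\Tych$ is the class of $k_R$-spaces, strictly larger than the AD-hull, which is the Tychonoff $k$-spaces). The inclusion that holds in general is $\ADA AD\subseteq\CHAA AD$, because coreflective subcategories of $\Kat A$ are AD-classes in $\Kat A$, so the smallest AD-class is contained in the smallest coreflective subcategory. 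Your chain therefore only yields $\ADA AD\subseteq\HADA AD\subseteq\CHAA AD$, and the missing inclusion $\CHAA AD\subseteq\ADA AD$ is precisely the nontrivial content of Theorem~\ref{THMCHDCAPA}. So the first route is not ``purely formal'' without that theorem --- it does not go through at all --- and you should drop it in favor of the second.
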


This corollary implies that the results we proved about \HADhulls in \Kat A can be
applied in the case of hereditary coreflective hulls in \Kat A as well.

E.g., if $\Kat D\subseteq\Kat A$ is a set of spaces and $\Kat B=\CHAA AD$ is hereditary, then
by Corollary \ref{CORCADISHAD} it fulfills the assumptions of Lemma \ref{LMHADOFSET} and we
get existence of a space $B$ with $\Kat B=\CHbA AB$ in this case.

\subsection{Coreflective hull of a map invariant hereditary class need not be hereditary}

Finally, we turn our attention to another question. Relatively little is known about
conditions on a class of spaces which ensure the heredity of the coreflective hull (AD-hull)
of this class. V.~Kannan has a result saying that if \Kat B is a hereditary family closed
under the formation of spaces with finer topologies then the coreflective hull \CH B of \Kat
B in \Top is hereditary as well (\cite[Remark 2.4.4(6)]{KANNAN1981}). Our Theorem \ref{THMCINC} yields a
kind of such condition, too. We next present a well-known example of classes \Kat B such that
\CH B is hereditary.

\begin{example}
Let $\alpha$ be an infinite cardinal and $\Kat G_\alpha$ be the class of all spaces with
cardinality at most $\alpha$. These classes are hereditary, map invariant (i.e., closed
under continuous images) and closed under the formation of prime factors.
The coreflective hull of $\Kat G_\alpha$ is hereditary for each $\alpha$. Spaces from the
coreflective hull of $\Kat G_\alpha$ are called \emph{$\alpha$-generated} and the subcategory of all
$\alpha$-generated spaces is denoted $\Genal$.

On the other hand, let \Kat B be a class of \tsps which is map invariant and
closed under prime factors. It is easy to show that if \Kat B contains an infinite space,
then either $\Kat B=\Top$ or $\Kat B=\Kat G_\alpha$ for some cardinal $\alpha$.
If $\Kat B$ consists of finite spaces only, then  $\CH B$ is either $\FG$ or $\Disc$.
\end{example}

It is natural to ask whether we can somehow weaken the above mentioned properties of the classes
$\Kat G_\alpha$ in such a way, that for every class $\Kat B$ with these properties the coreflective
hull $\CH B$ of $\Kat B$ in \Top is hereditary.

One possible weakening is replacing the condition that \Kat B is map invariant by divisibility.
We can construct easily an example showing that for such a class \CH B need not be hereditary
in general.

\begin{example}
Let \Kat B consist of all quotients of the space $\Com$ and of all discrete
(at most) countable spaces. This class is clearly divisible. Every space in \Kat B is
prime or discrete, hence \Kat B is closed under the formation of prime
factors. A \ssp of a prime space $P$ is either a discrete space or
a quotient of $P$, thus \Kat B is hereditary.

The coreflective hull $\CH B=\CHb{\Com}=\Seq$ is not hereditary.
\end{example}

Another possible weakening is omitting the closedness under prime factors. We show in
the rest of this section that there exists a class \Kat B which is hereditary and map invariant but
\CH B is not hereditary.

We start with two easy examples.

\begin{example}\label{EXABOM}
Let $\Kat B$ be the class of all continuous images of the space $\Bom$.
The class \Kat B is hereditary and map invariant, but $\CH B=\CHb{\Bom}$ is not hereditary. (This follows from
the fact that the prime factor $(\Bom)_\omega$ is $\Com$ and $\Com\notin\CHb{\Bom}$.)
\end{example}

It is known, that if $\Kat A$ is a map invariant class of \tsps then \CH A coincides with the
class $\Kat A_{gen}$ of \Kat A-generated spaces (see \cite[\S21]{HER} or \cite{HERLIM}).
A \tsp $X$ is said to be \emph{$\Kat A$-generated} if $U\subseteq X$ is closed
whenever $U\cap V$ is closed in $V$ for every \ssp $V$ of $X$ which belongs to \Kat A.
The subcategory $\Genal$ and the class of $k$-spaces used in Example
\ref{EXAKR} are examples of such categories.

We denote the cardinality of the topology of a space $X$ by $o(X)$ (in accordance with
\cite{JUHASZ}). For any cardinal $\alpha$ let us denote by $\Kat A_\alpha$ the class of all
\tsps such that $o(X)<2^\alpha$. This class is hereditary and map invariant. Its coreflective
hull (i.e., the class of all $\Kat A_\alpha$-generated spaces) will be denoted by $\Kat
C_\alpha$.

Note that $o(\Bom)=\omega$, thus $\CH B\subseteq\Kat C_\omega$ holds
for the category $\Kat B$ from Example \ref{EXABOM}.

\begin{example}\label{EXACOOM}
We show that $\Kat A_{\alnul}$ is not closed under the formation of prime factors and
consequently it is not hereditary.

Let $X$ be a countable \tsp with the cofinite topology. Clearly, $o(X)=\alnul$, thus
$X\in\Kat A_{\alnul}$. But the prime factor $X_a$ of $X$ is homeomorphic to $\Com$.
Only the finite \ssps of $\Com$ belong to $\Kat A_{\alnul}$. Thus the point
$\omega$ is isolated in each \ssp belonging to $\Kat A_{\alnul}$ and
$\Com\notin\Kat C_{\alnul}$.

Note that by Proposition \ref{PRHAUSND} in every non-discrete Hausdorff space $X$ we have
infinitely many disjoint open subsets in the \ssp $Y$ constructed in the proof of this
proposition. Therefore $o(X)\geq\mfr c$. This implies $\Kat A_{\alnul} \cap \Haus \subseteq
\Disc$ and, consequently, $\Kat C_{\alnul} \cap \Haus = \Disc$.
\end{example}

Note that, since the space constructed in the above example is
$T_1$, we also obtain that $\CHbA A{\Kat A_\alpha\cap\Kat A}$ is
not hereditary for $\Kat A=\Top_{0,1}$.

It is quite natural to look for a Hausdorff example after we have
constructed a $T_1$-space with the required properties. We have
already seen that such an example cannot be found in the
subcategory $\Kat C_{\alnul}$. We were able to construct a
Hausdorff example only under the assumption $2^{\alone}=2^\mfr c$
(which is valid under CH).

\begin{example}[$2^{\alone}=2^\mfr c$]\label{EXARCOC}
Suppose $2^{\alone}=2^\mfr c$. Let $X$ be the \tsp on the set \R
with the topology $\topo T=\{U\setminus A; U$ is open in \R and
$\card A\leq\alnul\}$. Clearly, $o(X)=o(\R).\card\{A\subseteq\R;
A$ is countable$\}= \mfr c .\mfr c^{\alnul}=\mfr c$. Thus
$X\in\Kat A_{\mfr c}$.

We claim that, for any $a\in X$, the prime factor $X_a$ does not
belong to $\Kat C_{\mfr c}$. Indeed, if $a\in V$ and $V$ is a \ssp
of $X_a$ such that $V\in \Kat A_{\mfr c}$, then $\card V=\alnul$
(otherwise $V$ contains a discrete \ssp $V\setminus\{a\}$ of
cardinality $\alone$ and $o(V)=2^{\alone}=2^{\mfr c}$). At the
same time $a\notin\ol{V\setminus\{a\}}$ (since $\{a\}\cup
(\R\setminus V)$ is a neighborhood of $a$). We see that $a$ is
isolated in all \ssps of $X_a$ belonging to $\Kat A_{\mfr c}$, but
$a$ is not isolated in $X_a$, thus $X_a\notin\Kat C_{\mfr c}$.
\end{example}

\begin{example}
After we have shown that $\Kat C_\alpha$ is not hereditary for
some $\alpha$, we can be interested in finding a concrete example
of a space from $\Kat C_\alpha$ and its \ssp which is not in $\Kat
C_\alpha$. Such an example can be found with the help of the
operation $\tr$.

Suppose that $X\in\Kat C_\alpha$ is such a space that
$X_a\notin\Kat C_\alpha$. Let $Y:=X\tr_a X$. Clearly, $Y\in\Kat
C_\alpha$. Recall that $X^a_{(X,a)}$ is the \ssp on the set
$\{(a,a)\}\cup(X\setminus\{a\})\times(X\setminus\{a\})$. Since
$X_a\notin\Kat C_\alpha$ and $X_a$ is a quotient of $X^a_{(X,a)}$,
we get that the \ssp $X^a_{(X,a)}\notin\Kat C_\alpha$ as well.

Note that, since the subcategories $\Top_1$, $\Haus$ are closed
under $\tr$, if we start with the space $X$ from Example
\ref{EXACOOM} (or Example \ref{EXARCOC}), the resulting space $Y$
will be $T_1$ (resp.~Hausdorff) as well.
\end{example}

\noindent{\textbf{Acknowledgement.}} I would like to thank
H.~Herrlich and J.~\v{C}in\v{c}ura for their help and lots of
useful comments while preparing this paper. Some parts of this
paper were prepared during my stay at Universit\"at Bremen. I am
grateful to the members of the KatMAT research group for their
hospitality and to DAAD for the financial support.

%\bibliographystyle{spmpsci}
%\bibliography{martin}

\end{document}